\renewcommand{\epsilon}{\varepsilon}
\renewcommand{\phi}{\varphi}
\renewcommand{\kappa}{\varkappa}
\newcommand{\cal}{\mathcal}
\newcommand{\G}{\mathcal{G}}
\newcommand{\IE}{\mathbb{E}}
\newcommand{\I}{\mathbbm{1}}
\newcommand{\IN}{\mathbb{N}}
\newcommand{\IP}{\mathbb{P}}
\newcommand{\IR}{\mathbb{R}}
\newcommand{\IZ}{\mathbb{Z}}
\newcommand{\Zb}{\bar{\bar{Z}}}
\newcommand{\Qb}{\bar{\bar{Q}}}   
\newtheorem{theorem}{Theorem}
\newtheorem{lemma}[theorem]{Lemma}
\newtheorem{corollary}{Corollary}
\newtheorem{proposition}[theorem]{Proposition}
\newtheorem{remark}{Remark}
\numberwithin{equation}{section}
\numberwithin{remark}{section}
\numberwithin{theorem}{section}
\numberwithin{corollary}{section}
\begin{document}

\title[Crossing velocities for a random walk in a random
potential]{Crossing velocities for an annealed random walk in a random
  potential}

\author{Elena Kosygina and Thomas Mountford} \thanks{\textit{2010
    Mathematics Subject Classification.}  Primary: 60K37. Secondary:
  82B41, 82B44.}  \thanks{\textit{Key words:} random walk, random
  potential, annealed measure, Lyapunov exponents, ballisticity.  }
\begin{abstract}
  We consider a random walk in an i.i.d.\ non-negative potential on the
  $d$-dimensional integer lattice. The walk starts at the origin and
  is conditioned to hit a remote location $y$ on the lattice. We
  prove that the expected time under the annealed path measure needed
  by the random walk to reach $y$  grows only linearly in the
  distance from $y$ to the origin. In dimension one we show the
  existence of the asymptotic positive speed. 
 \end{abstract}
\maketitle
\section{Introduction}
\subsection*{Model description and main results}
Let $V(z,\omega),\ z\in\IZ^d$, be i.i.d.\ random variables on a
probability space $(\Omega,{\cal F},\IP)$, which represent a random
potential on $\IZ^d$. We assume that
\begin{equation}
  \label{V}
  V(0,\omega)\in[0,\infty]\text{ a.s.,\
  }\ \IP(V(0,\omega)=0)<1,\ \text{ and }\ 
  \operatornamewithlimits{essinf}\limits_{\omega\in\Omega}\,V(0,\omega)=0.
\end{equation}
\begin{remark}{\em The last equality is not needed for any of our
    results and could have been simply replaced by the condition
    $\IP(V(0,\omega)=\infty)<1$. In the case when the potential $V$ is
    bounded away from zero, Theorem~\ref{d2} below becomes very simple
    (see Section 2.2 of \cite{Zy09}). The last assumption makes the
    situation much more delicate, and we would like to emphasize this
    from the beginning. A good example to have in mind is when
    $V(0)\in\{0,1,\infty\}$, $\IP(V(0)=0)>0$, and $\IP(V(0)=1)>0$.}
\end{remark}

Let $P^x$ be the measure on the space of nearest-neighbor paths on
$\IZ^d$, which corresponds to a simple symmetric random walk $(S_n)_{n
  \geq 0}$ that starts at $x\in\IZ^d$.  The expectation with respect
to $P^x$
will be denoted by $E^x$. Let us fix $y\in\mathbb{Z}^d$, $y\ne x$, and
set $\tau_y=\inf\{n\ge 0\,:\,S_n=y\}$. For $\omega\in\Omega$ such that
\[Z^{\omega,x}_y=E^x\left(\I_{\{\tau_y<\infty\}}
e^{-\sum_{n=0}^{\tau_y-1}V(S_n,\omega)}\right)> 0\] define the quenched
path measure by
\begin{align}
  \label{qpm}
  Q^{\omega,x}_y(A):&=(Z^{\omega,x}_y)^{-1}E^x\left(\I_{\{\tau_y<\infty\}}
  \I_A\,e^{-\sum_{n=0}^{\tau_y-1}V(S_n,\omega)}\right).
\end{align}
The annealed path measure $Q^x_y$ is then given by
\begin{align}
  \label{apm}
  Q^x_y(A):&=(Z^x_y)^{-1}\IE E^x\left(\I_{\{\tau_y<\infty\}}
    \I_A\,e^{-\sum_{n=0}^{\tau_y-1}V(S_n,\omega)}\right)\\&=(Z^x_y)^{-1}
  \IE(Q^{\omega,x}_y(A)Z^{\omega,x}_y;Z^{\omega,x}_y>0),\nonumber
\end{align}
where $Z^x_y=\IE Z^{\omega,x}_y$.  These measures
have a natural interpretation in terms of the ``killed random walk'',
which we recall in the next subsection.

In the continuous setting, namely, for Brownian motion in a Poissonian
potential, the above path measures were introduced and studied by
A.-S.\ Sznitman (see \cite{Sz98} and references therein), T.\ Povel
(\cite{Po97}), M.\ W\"uthrich (\cite{Wu98}). In the
context of random walks, various aspects of these measures were
addressed in, for example, \cite{Ze98}, \cite{Fl08}, \cite{Zy09},
\cite{IV10a}.

The rates of decay of the quenched and annealed partition functions,
\begin{align}
  \label{al}
  \alpha_V(h):&=-\lim_{r\to\infty}\frac1r\log Z^{\omega,0}_{[rh]}\
  \text{ ($\IP$-a.s.)
    \ and}\\
  \label{be}\beta_V(h):&=-\lim_{r\to\infty}\frac1r\log Z^0_{[rh]},\ h\in \IR^d,
\end{align}
known also as the quenched and annealed Lyapunov exponents
respectively, are well defined (non-random) norms on $\IR^d$ (see
\cite{Ze98}, \cite{Fl08}, and \cite{KMZ10}; for the existence of
$\alpha_V(\cdot)$ it is sufficient to assume that $\IE
V<\infty$). Moreover, by Jensen's inequality, $\beta_V(h)\le
\alpha_V(h)$ for all $h\in \IR^d$.

In this paper we consider a random walk under the annealed path
measures $Q_y^0$ and address the question whether it is ballistic in
the sense that the average time it takes the walk to hit $y$,
$E_{Q^0_y}\tau_y$, grows linearly in $\|y\|$ as $\|y\|\to\infty$. The same
question regarding the quenched path measures for Brownian motion in a
Poissonian potential was positively resolved in (\cite{Sz95}). 
Our main results are contained in the following two theorems.

\begin{theorem}
  \label{d2} Let $V(z,\omega),\ z\in\IZ^d$ be i.i.d.\ under $\IP$ and
  satisfy (\ref{V}). If $d=1$, assume, in addition, that
\begin{equation}
  \label{d=1}
  \IP(V(0,\omega)\in(0,\infty))>0.
\end{equation}
Then there exists a constant $C\in(0,\infty)$ such that
\[
\limsup_{\|y\| \to \infty} \frac{E_{Q_y^0 } (\tau_y )}{\|y\|}\le
C.
\]
\end{theorem}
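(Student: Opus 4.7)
My plan is a convexity-plus-tilting argument on the log-partition function. For $\lambda\in\IR$, define the tilted partition function
\[
Z_y^0(V+\lambda) := \IE E^0\bigl(\I_{\{\tau_y<\infty\}}\,e^{-\lambda\tau_y-\sum_{n=0}^{\tau_y-1}V(S_n,\omega)}\bigr),
\]
so $Z_y^0=Z_y^0(V)$. As a Laplace transform of the positive finite measure $\nu_y(t):=\IE E^0(\I_{\{\tau_y=t\}}e^{-\sum V(S_n,\omega)})$, the function $\lambda\mapsto\log Z_y^0(V+\lambda)$ is convex, and its right derivative at $0$ equals $-E_{Q_y^0}(\tau_y)$. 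Convexity then supplies the secant estimate
\[
E_{Q_y^0}(\tau_y) \;\le\; \frac{\log Z_y^0(V-\delta)-\log Z_y^0(V)}{\delta},
\]
valid for every $\delta>0$ at which $Z_y^0(V-\delta)<\infty$. The theorem is thereby reduced to producing one $\delta_0>0$ making the right-hand side $O(\|y\|)$.

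The term $-\log Z_y^0(V)$ is already $O(\|y\|)$: by (\ref{be}) it equals $\beta_V(y)+o(\|y\|)$, and $\beta_V$ is a norm on $\IR^d$, hence bounded on the unit sphere. It therefore suffices to establish, for some fixed $\delta_0>0$, the uniform a priori bound $\log Z_y^0(V-\delta_0)\le C\|y\|$; written out, this is the annealed exponential-moment estimate
\[
\IE E^0\bigl(\I_{\{\tau_y<\infty\}}\,e^{\delta_0\tau_y-\sum_{n=0}^{\tau_y-1}V(S_n,\omega)}\bigr) \;\le\; e^{C\|y\|},
\]
saying that $\tau_y$ has uniformly controlled exponential moments against the unnormalised annealed path measure. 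Combined with the secant inequality, this would yield $E_{Q_y^0}(\tau_y)\le C'\|y\|+o(\|y\|)$ and finish the proof.

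The core difficulty is establishing this exponential-moment estimate. A crude attempt via $\IE e^{-\sum V(S_n,\omega)}=\prod_z\phi(\ell_t(z))\le\phi(1)^{R_t}$, where $\phi(k):=\IE e^{-kV(0,\omega)}$, $\phi(1)<1$ by (\ref{V}), and $R_t$ is the range up to time $t$, succeeds cleanly in transient dimensions $d\ge 3$: the a.s.\ linear growth of $R_t$ renders the generating series $\sum_t e^{\delta_0 t}\nu_y(t)$ a convergent geometric one, uniformly in $y$. In the recurrent cases $d=1,2$ the range is sublinear and this naive bound is insufficient; indeed, a rough calculation in $d=1$ suggests that for fixed $\delta_0>0$ the sum $Z_y^0(V-\delta_0)$ may in fact blow up as $\|y\|\to\infty$, so the convexity approach requires genuine refinement or outright replacement by a different technique (e.g.\ a coarse-graining construction producing regeneration points at scale $\|y\|$, so that the exponential penalty is extracted not from $R_t$ but from forced crossings of positive-potential pockets). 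In $d=1$ the extra hypothesis (\ref{d=1}) must enter here, guaranteeing that such positive-potential ``blockers'' are encountered with positive density along any long excursion. Resolving this low-dimensional annealed tail estimate is where the substantive content of the theorem lies.
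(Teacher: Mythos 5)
Your convexity reduction is correct as far as it goes: convexity of $\lambda\mapsto\log Z_y^0(V+\lambda)$ gives the secant inequality
\[
E_{Q_y^0}(\tau_y)\;\le\;\frac{\log Z_y^0(V-\delta)-\log Z_y^0(V)}{\delta},
\]
valid whenever $Z_y^0(V-\delta)<\infty$, and the Lyapunov norm controls $-\log Z_y^0(V)=O(\|y\|)$. But you have not proved the estimate you reduce to, namely $\log Z_y^0(V-\delta_0)\le C\|y\|$ for some fixed $\delta_0>0$, and you say so yourself: the naive bound $\IE e^{-\sum V}\le\phi(1)^{|R_t|}$ is only good enough in transient dimensions, and for $d=1,2$ you gesture at ``coarse-graining'' and ``regeneration at scale $\|y\|$'' without supplying an argument. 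That missing step \emph{is} the theorem. Note also that the paper already contains your convexity argument run in the \emph{opposite} direction: Lemma~\ref{au} and Corollary~\ref{der2} deduce the bound $d\beta_{\lambda+V}(s)/d\lambda|_{\lambda=0^+}\le C$ \emph{from} Theorem~\ref{d2}, precisely because the moment bound is the input, not an easy output, of such tilting considerations. The paper's actual proofs are direct: for $d\ge2$ a Sznitman-style coarse-graining (occupied versus empty $L$-cubes), a lattice-animal tail bound on empty components (Lemma~\ref{estimate}), an $h$-process bound on the time spent in each empty component (Lemma~\ref{hprocess}), and an exponential bound on the number of distinct components visited (Step~5); and for $d=1$ a separate renewal-structure analysis culminating in Theorem~\ref{1d} (Proposition~\ref{ingr}, Lemma~\ref{probexp}, and the environment estimate Theorem~\ref{mess}). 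Neither is short, neither is contained in your write-up, and the $d=1$ case in particular needs its own machinery because the cube/lattice-animal approach genuinely breaks down there. In sum, your proposal correctly reformulates the theorem as an exponential moment bound on $\tau_y$ under the unnormalised annealed measure, but it does not prove that bound, and your own closing sentence concedes this is where the substance lies.
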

\begin{remark}{\em The condition (\ref{d=1}) is necessary:  if
    $d=1$ and our potential can take only two values, $0$ and
    $\infty$, both with strictly positive probability, then it can be
    shown that under the annealed measure as $ y \rightarrow \infty, $
    the process $y^{-1}S_{[s y^2]}\I_{\{s y^2 < \tau_y\}},\ s \ge 0$,
    converges in law to a Brownian excursion from $0$ to $1$, killed
    upon arriving at $1$.  In particular, $E_{Q_y^0 } (\tau_y )/y$
    converges to infinity as $y\to\infty$. This example runs counter
    to the ``natural assumption'' that the larger the potential the
    faster the random walk will achieve its target.}
\end{remark}
Theorem \ref{d2} readily leads to the following bound (the proof is given
in Section~\ref{two}).
\begin{corollary}\label{der2}
  For every unit vector $s\in
  \IR^d$ \[\frac{d\beta_{\lambda+V}(s)}{d\lambda}\Big|_{\lambda=0+}\le
C.\]
\end{corollary}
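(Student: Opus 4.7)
\textbf{Proof plan for Corollary~\ref{der2}.}

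The plan is to identify the ratio of the shifted and unshifted annealed partition functions with the moment generating function of $\tau_y$ under $Q^0_y$, and then combine Jensen's inequality with Theorem~\ref{d2}.

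Let $\tilde Z^0_y$ denote the annealed partition function associated with the shifted potential $\lambda+V$. Since
\[\sum_{n=0}^{\tau_y-1}\bigl(\lambda+V(S_n,\omega)\bigr) = \lambda\tau_y + \sum_{n=0}^{\tau_y-1}V(S_n,\omega),\]
the factor $e^{-\lambda\tau_y}$ pulls out of the inner expectations in (\ref{apm}), yielding the identity
\[\tilde Z^0_y = Z^0_y \cdot E_{Q^0_y}\bigl(e^{-\lambda \tau_y}\bigr).\]
By Jensen's inequality applied to the convex function $t\mapsto e^{-\lambda t}$,
\[E_{Q^0_y}\bigl(e^{-\lambda\tau_y}\bigr) \ge e^{-\lambda E_{Q^0_y}(\tau_y)},\]
and hence
\[\log Z^0_y - \log \tilde Z^0_y \le \lambda\, E_{Q^0_y}(\tau_y).\]

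I would then specialize to $y=[rs]$ for a unit vector $s\in\IR^d$, divide by $r$, and send $r\to\infty$. By the definition (\ref{be}) of the annealed Lyapunov exponent, $\frac{1}{r}\bigl(\log Z^0_{[rs]} - \log \tilde Z^0_{[rs]}\bigr) \to \beta_{\lambda+V}(s) - \beta_V(s)$, so
\[\beta_{\lambda+V}(s) - \beta_V(s) \le \lambda \cdot \limsup_{r\to\infty}\frac{E_{Q^0_{[rs]}}(\tau_{[rs]})}{r}.\]
Since $\|[rs]\|/r\to \|s\|=1$, Theorem~\ref{d2} bounds the $\limsup$ above by $C$; hence $\beta_{\lambda+V}(s) - \beta_V(s) \le C\lambda$, and dividing by $\lambda > 0$ and letting $\lambda\to 0+$ gives the claim.

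No step here is genuinely difficult once Theorem~\ref{d2} is in hand; the corollary is essentially just a translation of the annealed speed bound into a bound on the right derivative of the annealed Lyapunov exponent. The only minor point to check is that $\beta_{\lambda+V}(s)$ is well defined for $\lambda>0$, which is immediate since $\lambda+V$ again satisfies the hypotheses under which (\ref{be}) was established; finiteness of $E_{Q^0_{[rs]}}(\tau_{[rs]})$ for large $r$ is itself a consequence of Theorem~\ref{d2}.
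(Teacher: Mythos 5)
Your proposal is correct and follows essentially the same route as the paper. The paper packages the argument as Lemma~\ref{au} and invokes concavity of $\lambda\mapsto b(0,y,\lambda+V)=-\log Z^0_y-\log E_{Q^0_y}(e^{-\lambda\tau_y})$ to get the tangent-line inequality, while you obtain the identical inequality $\log Z^0_y-\log\tilde Z^0_y\le\lambda\,E_{Q^0_y}(\tau_y)$ directly from Jensen applied to $t\mapsto e^{-\lambda t}$; both then pass to the Lyapunov limit, apply Theorem~\ref{d2}, and let $\lambda\to 0+$, so the content is the same.
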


\begin{remark}
  {\em The existence of the above derivative follows from the concavity
  of the function $ \lambda \mapsto \beta_{\lambda+V}(h)$
  (see \cite[Theorem A(b)]{Fl07}).}
\end{remark}
In one dimension we can say more.
\begin{theorem}
\label{1d}
Let $d=1$ and $V(z,\omega),\ z\in\IZ^d$, satisfy the assumptions of
Theorem~\ref{d2}. Then there exists a constant $v\in(0,1)$ such that
\begin{equation}
  \label{v}
 \lim_{y\to\infty}\frac{E_{Q_y^0 }( \tau_y) }{y} = \frac{1}{v}. 
\end{equation}
Moreover,
\begin{equation}
  \label{der}
  \frac{d\beta_{\lambda+V}(1)}{d\lambda}\Big|_{\lambda=0+}=\frac{1}{v}.
\end{equation}
\end{theorem}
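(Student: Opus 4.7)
The plan is to first establish (\ref{v}) via a regeneration-based law of large numbers for $\tau_y$ under $Q_y^0$, and then deduce (\ref{der}) from (\ref{v}) by combining the concavity of $\lambda\mapsto\beta_{\lambda+V}(1)$ with the identity
\[
h(\lambda)\,:=\,\beta_{\lambda+V}(1)-\beta_V(1)\,=\,-\lim_{y\to\infty}\frac{1}{y}\log E_{Q_y^0}\!\bigl(e^{-\lambda\tau_y}\bigr),
\]
which is immediate from the definition of the Lyapunov exponent for the potential $\lambda+V$, using $\sum_{n=0}^{\tau_y-1}(\lambda+V(S_n,\omega))=\lambda\tau_y+\sum_{n=0}^{\tau_y-1}V(S_n,\omega)$.

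For (\ref{v}) I would exploit the one-dimensional geometry: every nearest-neighbour path from $0$ to $y>0$ visits every intermediate integer, so $\tau_y=\sum_{k=1}^{y}(\sigma_k-\sigma_{k-1})$ with $\sigma_k:=\inf\{n:S_n=k\}$. Call $k\in(0,y)$ a \emph{regeneration level} for the path if $S_n\ge k$ for all $n\in[\sigma_k,\tau_y]$. At such a $k$ the walk before $\sigma_k$ lies in $(-\infty,k-1]$ and the walk after $\sigma_k$ lies in $[k,y-1]$, so the potential values weighting the two pieces are supported on disjoint sets of sites and are therefore $\IP$-independent; this yields an exact product factorisation of $Z_y^0$ over successive regeneration levels $0=K_0<K_1<\cdots<K_N=y$. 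Setting $\Delta_i:=\sigma_{K_i}-\sigma_{K_{i-1}}$ and passing to an infinite-volume annealed measure on half-infinite paths via the subadditive machinery used in the construction of $\beta_V$ (cf.\ \cite{Ze98,Fl08,KMZ10}), I would show that $(K_i-K_{i-1},\Delta_i)_{i\ge 1}$ is stationary and ergodic with finite mean, and that the density $\rho:=\lim N/y$ of regeneration levels is strictly positive. Theorem~\ref{d2} supplies both $\IE\Delta_1<\infty$ and uniform integrability of $\{\tau_y/y\}_y$; the ergodic theorem then yields $\tau_y/y\to\rho\,\IE\Delta_1=:1/v$ in $Q_y^0$-probability, and uniform integrability upgrades this to $L^1(Q_y^0)$-convergence, proving (\ref{v}). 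The bound $v\le 1$ is immediate from $\tau_y\ge y$ and $v>0$ from Theorem~\ref{d2}.

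For (\ref{der}), set $\tilde g_y(\lambda):=-y^{-1}\log E_{Q_y^0}(e^{-\lambda\tau_y})$, so that each $\tilde g_y$ is concave on $[0,\infty)$ with $\tilde g_y(0)=0$ and $\tilde g_y'(0+)=E_{Q_y^0}(\tau_y)/y$, which by (\ref{v}) tends to $1/v$. Concavity and $\tilde g_y(0)=0$ give $\tilde g_y(\lambda)\le\lambda\,\tilde g_y'(0+)$, so sending $y\to\infty$ yields $h(\lambda)\le\lambda/v$; since $h$ is concave with $h(0)=0$ the ratio $h(\lambda)/\lambda$ is non-increasing in $\lambda>0$, whence $h'(0+)=\lim_{\lambda\downarrow 0}h(\lambda)/\lambda\le 1/v$. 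For the matching lower bound, the block independence from the previous step gives $E_{Q_y^0}(e^{-\lambda\tau_y})\sim(\IE e^{-\lambda\Delta_1})^{\rho y}$ as $y\to\infty$, hence $h(\lambda)=-\rho\log\IE e^{-\lambda\Delta_1}$, and a Taylor expansion at $\lambda=0$ gives $h(\lambda)/\lambda\to\rho\,\IE\Delta_1=1/v$ as $\lambda\downarrow 0$, whence $h'(0+)\ge 1/v$.

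The main obstacle is the construction of the regeneration decomposition with genuine independence of blocks: the event that $k$ is a regeneration level depends jointly on the walk's future past $\sigma_k$ and on the potentials on $\{k,k+1,\ldots\}$, so the standard device of introducing an auxiliary randomisation (as in the regeneration constructions for random walks in random environments) is needed to extract an honest stationary ergodic renewal sequence. A secondary technical point is the strict positivity of $\rho$: condition (\ref{d=1}) is essential here, as the counterexample in the remark following Theorem~\ref{d2} shows that without some positive $V$-mass the walk under $Q_y^0$ diffuses on the scale $y^2$ and has no positive regeneration density.
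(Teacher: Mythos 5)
Your high-level plan is the right one and closely parallels the paper's strategy (decompose over one-dimensional renewal/regeneration levels, get a renewal structure, and deduce both the LLN and the derivative identity), but the sketch has one outright logical flaw and passes over the two hardest parts of the argument. The logical flaw is circularity: you invoke Theorem~\ref{d2} to get $\IE\Delta_1<\infty$, uniform integrability of $\tau_y/y$, and $v>0$. In the paper, Theorem~\ref{d2} for $d=1$ is \emph{not} established independently --- the Section~\ref{two} argument does not adapt to $d=1$, and the paper explicitly proves Theorem~\ref{d2} in one dimension \emph{as a corollary of} Theorem~\ref{1d}. You therefore cannot use it as input. The finiteness of $v^{-1}$ has to be produced from scratch.

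Beyond the circularity, what you call ``a secondary technical point'' (positive density of regeneration levels) is in fact the technical heart of the whole one-dimensional proof, and your proposal gives no mechanism for it. The paper's route is: (i) establish the environment estimate (Theorem~\ref{mess}) showing that under $Q_y$ and $Q_{0,y}$ the density of ``reasonable'' sites $x$ with $V(x)\in[\kappa,K]$ is at least linear outside exponentially small events (Lemma~\ref{expbd}); (ii) couple with a biased random walk to convert density of reasonable sites into density of renewal points (Lemma~\ref{geomb}); (iii) deduce that the renewal kernel $q(r)=e^{\beta r}\Zb_{0,r}$ has an exponential tail (Lemma~\ref{probexp}). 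This is exactly where hypothesis (\ref{d=1}) enters and exactly what makes the generating-function manipulation $\sum_{r\ge 1}q(r)=1$ meaningful; without the renormalisation by $e^{\beta y}$ the weights $\Zb_{0,r}$ do \emph{not} sum to one, so the ``exact product factorisation of $Z_y^0$'' you assert is not a genuine probabilistic renewal structure until that identity is established. Your appeal to ``subadditive machinery'' and ``auxiliary randomisation'' names the problem but does not solve it. Finally, the passage from the conditioned measure $Q_{0,y}$ (no return to $0$) to the unconditioned $Q_y$ needs the separate estimates on the first renewal point $X_y$ (Lemmas~\ref{taux} and \ref{conX}); your decomposition $0=K_0<\cdots<K_N=y$ silently treats $0$ as a regeneration level, which it typically is not. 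With those pieces supplied your outline for (\ref{v}) and for the upper bound $h'(0+)\le 1/v$ via concavity matches the paper's, and the lower bound via the renewal expansion of $E_{Q_{0,y}}(e^{-\lambda\tau_y})$ is the same idea used in the paper's proof of (\ref{der}).
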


\subsection*{``Killed random walk'' description of the model}
Consider the following Markov chain (``killed random walk'') on
$\mathbb{Z}^d\cup \dagger$, where $\dagger$ is an absorbing state.  If
the walk is at $z\in\mathbb{Z}^d$ then with probability $1-e^{-V(z)}$
it goes to $\dagger$ and otherwise goes to one of the $2d$
nearest-neighbor sites with equal probabilities. We denote by
$\check{P}^{\omega,x}$ the measure, corresponding to this Markov chain
starting from $x$ in a fixed environment $V(z,\omega)$,
$z\in\mathbb{Z}^d$. Averaging over the environments gives the averaged
measure, $\check{P}^x(\cdot):=\IE\check{P}^{\omega,x}(\cdot)$. Let us
record the following obvious relations:
  \begin{align}
    \nonumber
    &Q^{\omega,x}_y(\cdot)=\check{P}^{\omega,x}(\cdot\,|\,\tau_y<\infty),\quad
    Z^{\omega,x}_y=\check{P}^{\omega,x}(\tau_y<\infty);
    \\\nonumber
    &Q^x_y(\cdot)=\check{P}(\cdot\,|\,\tau_y<\infty),\quad
    Z^x_y=\check{P}(\tau_y<\infty);
    \\&Q^{\omega,x}_y(A|B)=\check{P}^{\omega,x}(A\,|\,B\cap\{\tau_y<\infty\}).
    \label{mark3} 
  \end{align}
  The last equality will allow us to use the Markov property of the
  ``killed random walk'' to do computations under
  $Q^{\omega,x}_y$. Throughout the paper, when the starting point of a
  random walk is $0$ we shall often drop the superscript indicating
  the starting point.

\subsection*{Motivation and open problems}
There are several connections that motivate our interest and make us
believe that ballisticity is an important issue.

Recently, several works (\cite{Fl08}, \cite{Zy09}, \cite{IV10a})
addressed the question about the equality of quenched and annealed
Lyapunov exponents for small perturbations of a constant potential in
dimensions four and higher.  In particular, it was shown that when
$d\ge 4$ then under mild conditions on the potential for every
$\lambda>0$ there is a $\gamma^*>0$ such that for all
$\gamma\in(0,\gamma^*)$
\begin{equation}
  \label{eq}
  \beta_{\lambda+\gamma V}(\cdot)\equiv
\alpha_{\lambda+\gamma V}(\cdot).
\end{equation}
Recall the already mentioned fact that for $\lambda>0$ the random walk
under $Q_y$ is ballistic. Paper \cite{IV10a}, Theorem A, proves a
stronger result under even weaker conditions but still under the
restriction that $\lambda>0$. It is certainly an interesting question
whether (\ref{eq}) and its stronger version can be extended up to
$\lambda=0$ and whether $\gamma^*$ is locally uniform in $\lambda$ on
$[0,\infty)$. Such an extension, which is important in its own right,
would also help to clarify the relationship between the quenched and
annealed large deviations rate functions for random walks in a random
potential. This is the next connection that we would like to briefly
discuss.

Random walks in a random potential are more often considered under the
condition that they survive up to (a large) time $n\in\IN$ (see, for
example, \cite{Si95}, \cite{AZ96}, \cite{Kh96} and references
therein). The corresponding quenched and annealed measures with the
starting point $0$ are
\[Q^{\omega}_n(\cdot):=\check{P}^{\omega}(\cdot\,|\,\tau_\dagger>n);\quad
Q_n(\cdot):=\check{P}(\cdot\,|\,\tau_\dagger>n).\] It is known
(\cite{Ze98},\cite{Fl07}) that random walks under each of these
measures satisfy a full large deviation principle and the large
deviations rate functions, $I(\cdot)$ and $J(\cdot)$ respectively, are
given by the relations
\begin{align*}
  I(h)&=\sup_{\lambda\ge 0}(\alpha_{\lambda+V}(h)-\lambda);\\
  J(h)&=\sup_{\lambda\ge 0}(\beta_{\lambda+V}(h)-\lambda).
\end{align*}
Corollary~\ref{der2} implies, in particular, that for small $\|h\|$ we
have $J(h)=\beta_V(h)$. A similar result holds for $I(h)$ if the right
derivative of $\alpha_{\lambda+V}(s)$ with respect to $\lambda$ at
$\lambda=0+$ is bounded uniformly in $s$, $\|s\|=1$ (see
\cite[Corollary~2.3]{Sz95}, for the quenched result in a continuous
setting). If (\ref{eq}) were shown to hold also for $\lambda=0$ then
we would immediately conclude that for $d\ge 4$ and sufficiently small
$\gamma$ the large deviations rate functions $I$ and $J$ coinside in
some neighborhood of the origin.

For further details, connections with polymer measures, and open
problems we refer to the review \cite{IV10b}. 

\begin{remark}
  {\em After this paper was submitted for publication, we learned
    about the concurrent and completely independent work
    \cite{IV11}. The authors consider $d\ge 2$ and employ a different
    method, which allows them not only to show that the walk under
    $Q_y$ is ballistic (our Theorem~\ref{d2} for $d\ge 2$) but also to
    obtain the corresponding law of large numbers and central limit
    theorem (see Theorem~C of \cite{IV11}). For dimension one,
    Theorem~\ref{d2} can seemingly be also derived from the large
    deviations approach of \cite{GdH92}.  We believe, nonetheless, that
    our treatment is more direct and less technical.}
\end{remark}

\subsection*{Organization of the paper}
In Section~\ref{two}, we prove Theorem~\ref{d2} but only for $d > 1$.
The argument given does not seem to be adaptable to one dimension.
However, since Theorem~\ref{1d} implies Theorem~\ref{d2} for $d=1$, we
just need to prove the former. This is done in Section~\ref{three}
modulo several technical results (Lemmas \ref{taux}, \ref{probexp},
\ref{conw}, and \ref{conX}). The crucial  result among these is
Lemma~\ref{probexp}. Its proof, as well as proofs of other listed
above lemmas, are given in Section~\ref{tr} after the key
exponential estimate Theorem \ref{mess} is established in
Section~\ref{four}. Several elementary auxiliary results are collected
in the Appendix.

\section{Proof of Theorem~\ref{d2} for dimension higher than one}
\label{two} 

The quenched case in a continuous setting was investigated in
\cite{Sz95}. The argument given there applies to the quenched discrete
random walk with minor modification. Though we deal with the annealed
case, the basic division of space into occupied and unoccupied cubes
(see below) and the exploitation of lattice animal bounds are lifted
from \cite{Sz95}.

Let $d>1$ and $y\in\IZ^d\setminus\{0\}$ be the ``target point''. For
$A\subset \IZ^d$ define $\tau(A)=\inf\{n\ge 0:\,S_n\in A\}$. Fix a
large even $L$ and for $q\in\IZ^d$ let
$B(q)=(Lq+[-L/2,L/2)^d)\cap\IZ^d$. The set of these cubes, $\{B(q),\
q\in\IZ^d\}$, forms a partition of $\IZ^d$. Choose some
$\kappa\in(0,1)$ so that $\IP(V(0)\ge \kappa)>0$.  Given an
environment $\omega\in \Omega$ and $A\subset \IZ^d$ we shall say that
\[A\ \text{is {\em occupied} if }\
\max_{A\setminus\{y\}}V(x,\omega)\ge \kappa\ \text{and {\em empty}
  otherwise}.\] Denote by $\mathcal{O}=\mathcal{O}(\omega)$ the union
of all occupied cubes in our partition and by $\mathcal{O}^c$ the
union of all empty cubes.

\subsection*{Step 1} We shall estimate the time spent by our random walk in
$\mathcal{O}$. This is not difficult, since from every point in
$\mathcal{O}$ there is a path of length at most $d(L-1)$ to a point
where the potential is at least $\kappa$. This observation essentially
provides the proof of the following lemma, which is very much analogous to Theorem 1.1 of \cite{Sz95}.
\begin{lemma}
\label{lem211}
There exists constant $C_1=C_1(L,\kappa)$ such that for all
$y\in\IZ^d\setminus\{0\}$
\[
E_{Q_y} \left(\sum_{n=0}^{\tau_y-1} \I_{\{S_n \in
    \mathcal{O}\}}\right) \le C_1 \|y\|.
\]
\end{lemma}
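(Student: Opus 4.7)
The strategy is to combine a uniform-in-$\omega$ killing estimate inside $\mathcal{O}$ with a stopping-time decomposition of the visits to $\mathcal{O}$, then balance the resulting geometric decay against the annealed partition function $Z^0_y$.

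I would first establish a pointwise local killing bound: for every $\omega$ and every $z\in\mathcal{O}$,
\[
 \check{P}^{\omega,z}(\tau_\dagger\le d(L-1))\ \ge\ p_0\ :=\ (2d)^{-d(L-1)}(1-e^{-\kappa}).
\]
For $z\in B(q)\subset\mathcal{O}$, fix a nearest-neighbor path $\pi_z\subset B(q)$ of length $\ell_z\le d(L-1)$ from $z$ to some $x_q\ne y$ with $V(x_q)\ge\kappa$, routed around $y$ if necessary (possible since $d>1$). At $\pi_z(i)$ for $i<\ell_z$ the killed walk either dies there or moves to $\pi_z(i+1)$ with combined probability $(1-e^{-V(\pi_z(i))})+e^{-V(\pi_z(i))}/(2d)\ge 1/(2d)$; conditional on arriving alive at $x_q$ it dies there with probability $\ge 1-e^{-\kappa}$. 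Multiplying yields $p_0$.

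Next, introduce the spaced-out visit times $\sigma_0=\inf\{n:S_n\in\mathcal{O}\}$ and $\sigma_{k+1}=\inf\{n\ge\sigma_k+d(L-1):S_n\in\mathcal{O}\}$. Because the time spent in $\mathcal{O}$ within each window $[\sigma_k,\sigma_{k+1})$ is at most $d(L-1)$,
\[
 Z^0_y\,E_{Q_y}\!\Bigl[\sum_{n=0}^{\tau_y-1}\I_{\{S_n\in\mathcal{O}\}}\Bigr]\ \le\ d(L-1)\sum_{k\ge 0}\IE\check{P}^{\omega,0}(\sigma_k<\tau_y<\infty).
\]
Each summand admits two bounds: trivially by $Z^0_y=\IE\check{P}^{\omega,0}(\tau_y<\infty)$, and, by iterating the strong Markov property at $\sigma_0,\ldots,\sigma_{k-1}$ together with the local killing estimate (applicable since $S_{\sigma_j}\in\mathcal{O}$ on $\{\sigma_j<\infty\}$), by $(1-p_0)^k$. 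The annealed partition function admits the crude lower bound $Z^0_y\ge(2d)^{-\|y\|_1}(\IE[e^{-V(0)}])^{\|y\|_1}\ge c^{\|y\|}$, obtained by forcing the walk to traverse a coordinate-wise shortest path to $y$ (the moment-generating factor being strictly positive by (\ref{V})). Choose $k_0=O(\|y\|)$ with $(1-p_0)^{k_0}\le Z^0_y$; splitting the sum at $k_0$ gives $\sum_{k\le k_0}Z^0_y+\sum_{k>k_0}(1-p_0)^k\le (k_0+1/p_0)Z^0_y=O(\|y\|)\,Z^0_y$, and dividing by $Z^0_y$ produces $C_1\|y\|$.

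The main obstacle is the first step: the local killing estimate must be pathwise in $\omega$, because the geometric decay comes from iterating the strong Markov property inside a fixed environment. The occupied/empty dichotomy at threshold $\kappa$—legitimized by $\IP(V(0)\ge\kappa)>0$—is precisely what buys this $\omega$-uniformity; any purely annealed version of the local killing estimate would have to contend with correlations among the environments at successive $\sigma_k$, which would be substantially more delicate.
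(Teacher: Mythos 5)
Your argument is correct and is essentially the paper's proof: both introduce the spaced-out stopping times $\sigma_k$ for visits to $\mathcal{O}$, iterate a pathwise (uniform in $\omega$) local killing estimate of order $(2d)^{-dL}(1-e^{-\kappa})$ through the strong Markov property of the killed walk in a fixed environment, and then trade the resulting geometric decay in $k$ against a lower bound of the form $Z^0_y\ge e^{-C\|y\|}$. The differences are cosmetic — you iterate the killing probability directly into $(1-p_0)^k$ where the paper packages the same idea as a Binomial-domination bound with a free parameter $\epsilon$, and you give an elementary lower bound on $Z^0_y$ where the paper invokes finiteness of the annealed Lyapunov exponent $\beta_V$ (both are fine); just take the window length to be $dL$ rather than $d(L-1)$ so that the detour around $y$ and the final killing step at $x_q$ both fit inside one window.
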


\begin{proof}
  We shall show that there is an $\epsilon>0$ and
  $n_0=n_0(\epsilon,\kappa)$ such that for all $n\ge
  n_0$,
  \begin{equation}
    \label{ka}
    Q_y\left(\frac{1}{dL\|y\|}\sum_{i=0}^{\tau_y-1}
    \I_{\{S_i\in\mathcal{O}\}}>n\right)\le
  2e^{-\epsilon\kappa (n\|y\|-2)/2}.
  \end{equation}
  This will immediately imply the statement of the lemma.

  Define the stopping times $\sigma_m,\ m \in\IN$, by \[\sigma_1 \ = \
  \inf \{n \ge 0:\, S_n \in \mathcal{O}\}, \quad \sigma_{m+1} \ = \
  \inf \{n \ge \sigma_m + dL:\, S_n \in \mathcal{O}\}.\] The
  probability that during the time interval $[\sigma_m,\sigma_m +dL)$
  a simple symmetric random walk hits a point with the potential at
  least $\kappa$ and does not hit $y$ is greater than $(2d)^{-dL}$. Using
  the Markov property of the killed random walk we get that for
  $\epsilon\in(0,1)$ and all $m\ge 2$
  \begin{equation}
    \label{sita}
  \IE\check{P}^{\omega,0}\left(\sigma_m <
      \tau_y<\infty\right) \leq e^{-(m-1) \epsilon \kappa} +
 P(Y < (m-1) \epsilon ),
  \end{equation}
  where $Y$ is a binomial random variable with parameters $(m-1)$ and
  $(2d)^{-dL}$.  We choose $ \epsilon \in(0,(2d)^{-Ld})$ sufficiently
  small to ensure that $P(Y< (m-1)\epsilon ) < e^{-(m-1)\epsilon }$
  for all $m$ large. By (\ref{be}) we know that there is
  $\beta_0\in(0,\infty)$ such that $Z_y\ge e^{-\beta_0\|y\|}$ for all
  $y\in\IZ^d\setminus\{0\}$.  Dividing (\ref{sita}) by $Z_y$ and using
  (\ref{mark3}) and the last inequality we get (recall that
  $\kappa\in(0,1)$)
\[
Q_y (\sigma_m < \tau_y) \ \le \ 2e^{-(m-1) \epsilon \kappa }e^{\beta_0 \|y\|}.
\]
This completes the proof, since the set in the left hand side of
(\ref{ka}) is contained in $\{\sigma_{n\|y\|}<\tau_y\}$.\qedhere
\end{proof}

\subsection*{Step 2} To get a bound on the time spent by the walk in empty
cubes we need some information about sizes of connected components
of $\mathcal{O}^c$ under $Q_y$ (considered as a measure on
environments).

Given an $\omega\in\Omega$, we shall say that $x_1$ and $x_2$, 
both in $\IZ^d$, are connected in $\mathcal{O}^c$ if there is a simple random
walk path from $x_1$ to $x_2$ entirely contained in $\mathcal{O}^c$.
This defines a partition of $\mathcal {O}^c$ into connected
components.  If we consider a site percolation on $\IZ^d$ where the
site $q$ is open if and only if $B(q)$ is empty then standard percolation
results (see e.g.\ \cite{Gr99}) imply that for sufficiently large
$L$ all connected components of $\mathcal{O}^c$ are finite
$\IP$-a.s.. Since $Q_y$ is absolutely continuous with respect to
$\IP$, the same conclusion is true for $Q_y$-a.e.\ $\omega$. From now
on we suppose that $L$ is sufficiently large so that the above holds.

We shall need the following notation.  Let $D(x)=B(q)$ if $x\in
B(q)\cap\mathcal{O}$ and let $D(x)$ be equal to the connected
component of $\mathcal{O}^c$ that contains $x$ if $x\in
\mathcal{O}^c$.  Set $|D(x)|=\#\{q\in\IZ^d:\, B(q)\subset
D(x)\}$. Notice that $|D(x)|=1$ for every $x\in\mathcal{O}$. For
$D(x)\subset \mathcal {O}^c$ define the outer boundary
$\mathrm{Ad\,}D(x)$ as the union of all cubes in $\mathcal{O}$ which
are adjacent to $D(x)$, i.e.\ \[\mathrm{Ad\,} D(x)=\{z\in
\mathcal{O}:\,\exists\, x_1\in D(x),\ \exists\,z_1\in D(z)\ \text{such
  that }\|x_1-z_1\|=1\}.\] When $x\in\mathcal{O}$ we set $\mathrm{Ad\,}
D(x)=\emptyset$. The usual internal boundary of $D(x)$ will be denoted by
$\partial D(x)$, i.e. \[\partial D(x)=\{z\in D(x):\,\exists
z_1\notin D(x)\ \text{such that }\|z-z_1\|=1\}.\]

Consider a sequence of stopping times $(\rho_i)_{i \geq 0} $ and
an increasing sequence of sets $(A_i)_{i \geq 0}$ given by
$\rho_0=-1,\ A_0 = \emptyset $ and for $i\in\IN$,
\begin{align*}
  \rho_i&= \inf \{n > \rho_{i-1}: S_n \notin A_{i-1}  \},\\
  A_i&=A_{i-1} \cup D(S_{\rho_i}) \cup \mathrm{Ad\,} D(S_{\rho_i}).
\end{align*}
Note that $A_{i-1} \cap D(S_{\rho_i})=\emptyset$.  Finally, we introduce the
``discovery'' filtration $(\G_i)_{i \geq 1} $ where $\G_i $ is the
sigma field generated by $(S_{n \wedge \rho_i} )_{n \geq 0} $ and
$(V(x))_{ x \in A_{i-1}}$. 

\begin{lemma} \label{estimate} There exist strictly positive constants $c_2$
  and $C_2$ not depending on $L$ so that for all $i \ge 1$,
  $N\in\IN$, and all sufficiently large $L$
  \begin{equation}
    \label{est}
    Q_y(|D(S_{\rho_i})| = N,\,D(S_{\rho_i})\subset
\mathcal{O}^c\, |\, \G_i, ) \leq C_2 e^{-c_2NL^d}
  \end{equation}
\end{lemma}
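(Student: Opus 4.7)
The plan is a lattice-animal decomposition on the environment side, combined with a Radon-Nikodym comparison of $Q_y$ with $\IP$, adapting to the annealed setting the approach of \cite{Sz95}.

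First I would reduce the claim to a per-animal estimate. Let $q_0$ denote the $\mathcal{G}_i$-measurable cube of the partition containing $S_{\rho_i}$. On $\{|D(S_{\rho_i})|=N,\,D(S_{\rho_i})\subset\mathcal{O}^c\}$ the component $D(S_{\rho_i})$ is a connected (in the cube-adjacency graph) union of $N$ empty cubes containing $q_0$ and, by construction of the $A_j$, disjoint from $A_{i-1}$. A standard lattice-animal count bounds the number of such configurations $\mathcal{A}$ by $\alpha^N$ with $\alpha=\alpha(d)$, and for each such $\mathcal{A}$ the event $\{D(S_{\rho_i})\supseteq\mathcal{A}\}$ reduces to the environment event $\{V|_{\mathcal{A}}<\kappa\}$ involving only the undiscovered part of the environment. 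A union bound then reduces the lemma to showing, for each such $\mathcal{A}$,
\[
Q_y\bigl(V|_{\mathcal{A}}<\kappa\,\big|\,\mathcal{G}_i\bigr)\,\le\, \tilde M^N,
\]
with $\tilde M\le e^{-cL^d}$ for some $c>0$ and all $L$ large.

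For the per-animal estimate I would use $Q_y=\check{P}(\cdot\mid\tau_y<\infty)$, integrate out the environment outside $A_{i-1}$, and expand $\check{P}^{\omega,S_{\rho_i}}(\tau_y<\infty)$ as a sum $\sum_\gamma(2d)^{-|\gamma|}e^{-\sum V(S_n)}$ over nearest-neighbor paths $\gamma$ from $S_{\rho_i}$ to $y$. Using independence of the $V(z)$, the Radon-Nikodym ratio of $Q_y(\cdot\mid\mathcal{G}_i)$ with respect to $\IP$ on the event $\{V|_\mathcal{A}<\kappa\}$ factors, under a suitable weighted path measure $\mu$, as the expectation of
\[
\prod_{z\in\mathcal{A}}\frac{\IE[e^{-n_z(\gamma)V}\mid V<\kappa]}{\IE[e^{-n_z(\gamma)V}]},
\]
where $n_z(\gamma)$ counts visits of $\gamma$ to $z$. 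Unvisited sites contribute the factor $1$, while an elementary calculation shows each visited site contributes at most $1/q$ with $q:=\IP(V(0)<\kappa)\in(0,1)$. What remains is to control the exponential moment $E_\mu[(1/q)^R]$ of the number $R$ of sites of $\mathcal{A}$ visited by a $\mu$-path from $S_{\rho_i}$ to $y$; the expected feature is that such a path traverses $\mathcal{A}$ in a tube-like fashion, visiting only $O(NL^{d-1})$ of the $NL^d$ sites.

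Putting the pieces together gives $Q_y(\cdot\mid\mathcal{G}_i)\le\alpha^N\,\IP(V|_\mathcal{A}<\kappa)\,E_\mu[(1/q)^R]\le\alpha^N q^{NL^d}(1/q)^{CNL^{d-1}}=\alpha^N q^{NL^{d-1}(L-C)}$, and for $L$ large this is $\le C_2 e^{-c_2NL^d}$ with $c_2$ and $C_2$ independent of $L$. The main obstacle is the visit bound in the previous paragraph: the naive site-by-site estimate already produces a factor $(1/q)^{NL^d}$ that exactly cancels $\IP(V|_\mathcal{A}<\kappa)$, so one must genuinely exploit the sparseness of a $Q_y$-path inside a large empty cluster — this is the geometric input that is available only for $d>1$, consistent with the scope of Section~\ref{two}.
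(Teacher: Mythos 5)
Your framing is right at the top level (lattice-animal count, reduction to a per-animal estimate, comparison of $Q_y$ with $\IP$), and the Radon--Nikodym factorization you write down is correct: on the event $\{V|_\mathcal{A}<\kappa\}$ the conditional probability under $Q_y(\cdot\mid\G_i)$ is indeed
\[
E_\mu\Bigl[q^{NL^d-R}\!\!\prod_{z\in\mathcal{A}\,:\,n_z\ge 1}\frac{\IE[e^{-n_z V}\I_{\{V<\kappa\}}]}{\IE[e^{-n_z V}]}\Bigr]\ \le\ q^{NL^d}\,E_\mu\bigl[(1/q)^R\bigr],
\]
with $\mu$ the $\G_i$-conditional annealed path measure and $R$ the number of distinct sites of $\mathcal{A}$ visited. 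The gap is exactly the one you flag at the end and then leave open: the estimate $E_\mu[(1/q)^R]\le (1/q)^{O(NL^{d-1})}$ is neither proved nor plausible as stated. First, the claimed deterministic visit count $O(NL^{d-1})$ (a boundary-area scaling) is not what a crossing path does; for a region of $N$ cubes of side $L$ a random walk that must exit typically visits on the order of $(\mathrm{diam})^2$ sites, i.e.\ $\Theta(L^2 N^{2/d})$ in $d\ge3$, so even the \emph{typical} value is of a different order. Second, and more seriously, what you actually need is an exponential-moment (large-deviation) bound on $R$ under $\mu$, i.e.\ $\mu(R>\alpha NL^d)\le e^{-cNL^d}$ for some $\alpha<1$. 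This is a genuine quantitative statement about the annealed path measure inside the undiscovered region, it is not a standard fact, and your proposal contains no argument for it. Since the naive bound $(1/q)^{R}\le(1/q)^{NL^d}$ exactly cancels $q^{NL^d}$, the lemma stands or falls on precisely this unproved control, so as written the proof is incomplete.

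The paper's proof avoids this difficulty entirely. Rather than comparing $Q_y(\Omega_{i,N}\mid\G_i)$ directly to $\IP(\Omega_{i,N}\mid\G_i)$, it compares it to $Q_y(\Omega'_{i,N}\mid\G_i)$, where $\Omega'_{i,N}$ is obtained by forcing each inner sub-cube $B'(q)$, $q\in\mathfrak{A}_N$, to be occupied. The quantity $M^\omega_i$ (the maximal survival expectation from $\partial D_N$ to $y$ avoiding $D_N$) does not depend on the potential inside $D_N$ and therefore cancels in the ratio. The numerator is bounded above by summing over successive visits to $\partial D_N$, each return costing at least a fixed factor $1-(2d)^{-dL}(1-e^{-\kappa})$; the denominator is bounded below by forcing one explicit path through the ring $D_N\setminus D_N'$ to the maximizing boundary point. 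Both path costs are only exponential in $NL$, not $NL^d$, so they are dominated by the ratio $\IP(\Omega_{i,N}\mid\G_i)/\IP(\Omega'_{i,N}\mid\G_i)\le q_0^{N(L/2)^d}/(1-q_0^{(L/2)^d})^N\le e^{-c_4NL^d}$. This sidesteps any need to control how many sites the walk visits: the geometry of $D_N$ enters only through the $O(NL)$-length forced path and the $(L/2)^d$ sub-cube volume. If you want to pursue your route instead, the missing ingredient is precisely a large-deviation estimate for $R$ under $\mu$, and making that rigorous for the annealed conditional measure would be substantially harder than adopting the comparison-of-events argument.
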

\begin{proof}
  Define $q_i$ by the relation $S_{\rho_i}\in B(q_i)$. We first note
  that there are less than $(3^d)^{2N}$ distinct connected sets in
  $\IZ^d$ of cardinality $N$ containing $q_i$ (see e.g.\
  p.\,1009 of \cite{Sz95}). For each such set $\mathfrak{A}_N$, $q_i\in
  \mathfrak{A}_N$, define $D_N=\cup_{q\in \mathfrak{A}_N}B(q)$. It is
  sufficient to show that there are strictly positive constants $c_3$ and $C_3$, not depending on 
  $L$ or $N$,
  such that for every $D_N$ with $|D_N|=N$ and containing $S_{\rho_i}$
  \[Q_y(D(S_{\rho_i}) =D_N,\,D_N\subset
\mathcal{O}^c\,|\,\G_i ) \leq C_3 e^{-c_3NL^d}.\]
  
From this point on we fix $D_N$ (and so $\mathfrak{A}_N$).  We suppose
that $y $ is not in $D_N$ and leave it to the reader to make the minor
modifications for the case when $y\in D_N$.
  
  Given $\G_i$ and $D_N$, denote by $\Omega_{i,N}$ all environments
  which agree with $(V(x))_{x\in A_{i-1}}$ and have $D_N$ as a
  connected component of $\mathcal{O}^c$ containing $S_{\rho_i}$. This
  means, in particular, that $\mathrm{Ad\,} D_N\subset
  \mathcal{O}$. We need to get an upper bound on
  $Q_y(\Omega_{i,N}\,|\,\G_i)$. We shall compare this probability with
  the probability of the following modified set of environments. Let
  $B'(q)=Lq+[-L/4,L/4)^d$ and $D_N'=\cup_{q\in \mathfrak{A}_N}B'(q)$,
  $q_i\in \mathfrak{A}_N$. Denote by $\Omega'_{i,N}$ all environments
  which can be obtained from those in $\Omega_{i,N}$ by changing the
  potential only on $D'_N$ so that each center cube $B'(q)$,
  $q\in\mathfrak{A}_N$, becomes occupied.  A suitable upper bound on
  \begin{equation}\label{comp}
    \frac{Q_y(\Omega_{i,N}\,|\,\G_i)}{Q_y(\Omega'_{i,N}\,|\,\G_i)}
    = \frac{\IE\left(
      \check{P}^{\omega,S_{\rho_i}}  
       (\tau_y<\infty)\I_{\Omega_{i,N}}
        \,\big|\,\G_i\right
      )}{\IE \big(\check{P}^{\omega,S_{\rho_i}}(
       \tau_y<\infty)\I_{\Omega'_{i,N}}\,\big|\,\G_i\big)}
  \end{equation}
  will complete the proof of this lemma.  Let
  \begin{equation}\label{M}
    M^\omega_i=\max_{x\in
      D_N}E^x\left(e^{-\sum_{n=1}^{\tau_y-1}V(S_n,\omega)}
        \I_{\{\tau(D_N)>\tau_y\}}\I_{\{\tau_y<\infty\}}\right). 
  \end{equation}
  The expression we maximize can be non-zero only at $x\in\partial
  D_N$ (or at $x$ neighboring $y$ if $y\in
  D_N$).  Note that $M^\omega_i $ does not depend on the values of $V$ in $D_N$. 

  To bound the numerator in (\ref{comp}) we first observe that
  replacing the potential by $0$ in $D_N$ can only increase
  the expectation. Let $\nu_0=0$ and
  $\nu_{i+1}=\inf\{n>\nu_i\,:\,S_n\in\partial D_N\}$. Then, given
  $\G_i$, for $\omega\in\Omega_{i,N}$ we have 
  \begin{multline*}
    \check{P}^{\omega,S_{\rho_i}} (\tau_y<\infty)
    \le\\
    \sum_{k=0}^\infty
    E^{S_{\rho_i}}\left(e^{-\sum_{n=0}^{\nu_k}V(S_n)}
      E^{S_{\nu_k}}
      \left(e^{-\sum_{n=1}^{\tau_y-1}V(S_n)}
        \I_{\{\tau(D_N)>\tau_y\}}\I_{\{\tau_y<\infty\}}\right)
      \I_{\{\nu_k<\tau_y\}}\right)\\\le \sum_{k=0}^\infty
    M^\omega_i E^{S_{\rho_i}}\left(
      e^{-\sum_{n=0}^{\nu_k}V(S_n)}
      \I_{\{\nu_k<\tau_y\}} \right)=M^\omega_i\sum_{k=0}^\infty
    \check{P}^{S_{\rho_i}}\left(
      \nu_k<\tau_y \right).
  \end{multline*}
  For any point in $\partial D_N$ which is not adjacent
  to $y$, we have a uniform strictly positive lower bound,
  $(2d)^{-dL}$, of hitting a site $z$ with $V(z) \ge\kappa$ before
  returning to $D_N$. But to return to $D_N$ from
  $z$ the walk has to survive. It follows easily from the Markov
  property that \[\check{P}^{S_{\rho_i}}\left( \nu_k<\tau_y\right)\leq
  (1-(2d)^{-dL} (1- e^{-\kappa}))^{k}. \]
  Recall that,
  given $\G_i$,
  $M^\omega_i$ does not depend on the values of the potential on
  $D_N$. Thus, for all sufficiently large $L$
  \begin{multline*}
    \IE\left( \check{P}^{\omega,S_{\rho_i}}
      (\tau_y<\infty)\I_{\Omega_{i,N}} \,\big|\,\G_i\right ) \le
    \frac{(2d)^{dL}} {(1- e^{-\kappa})}\,\IE \left(M^\omega_i
      \I_{\Omega_{i,N}}\,\big|\,\G_i\right )\\=\frac{(2d)^{dL}} {(1-
      e^{-\kappa})}\,\IE (M^\omega_i\,\big|\,\G_i)\,\IP(
    \Omega_{i,N}\,\big|\,\G_i).
\end{multline*}
Finally, we shall get a lower bound on the denominator.  Denote by
$x_0$ a point where the maximum in (\ref{M}) is attained. Observe that
between any two points in $D_N\setminus D'_N$ there is a path of
length at most $dL(N+1)$ within this set.  In particular, there is
such a path from $S_{\rho_i}$ to $x_0$.  Thus we have
\begin{equation*}
  \IE \big(\check{P}^{\omega,S_{\rho_i}}(
  \tau_y<\infty)\I_{\Omega'_{i,N}}\,\big|\,\G_i\big)\ge
  (2d)^{-dL(N+1)}e^{-\kappa dL(N+1)}\,\IE
  (M^\omega_i\,\big|\,\G_i)\,\IP( \Omega'_{i,N}\,\big|\,\G_i).
\end{equation*}
Since $M^\omega_i$ does not depend on the potential in $D_N$, we can
now conclude that
\[
\frac{Q_y(\Omega_{i,N}\,|\,\G_i)}{Q_y(\Omega'_{i,N}\,|\,\G_i)}\le
\frac{(2d)^{dL(N+2)}e^{\kappa dL(N+1)}} {(1- e^{-\kappa})}\,\frac{\IP(
  \Omega_{i,N}\,\big|\,\G_i)}{\IP( \Omega'_{i,N}\,\big|\,\G_i)}
\]
The ratio of probabilities is bounded above
by \[\frac{(\IP(V(x)<\kappa))^{N(L/2)^d}}
{(1-(\IP(V(x)<\kappa))^{(L/2)^d})^N}\le e^{-c_4NL^d}\] for all
sufficiently large $L$. The last two bounds imply that there are
positive $c_3$ and $C_3$ not dependent on $L$ such that
\[Q_y(D(S_{\rho_i}) =D_N\, |\,\G_i ) = Q_y(\Omega_{i,N}\,|\,\G_i)\le
C_3e^{-c_3NL^d}\] as claimed.\qedhere
\end{proof}

\subsection*{Step 3} We now wish to show that, given $\G_{i} $, the
expected amount of time spent inside the (unknown) new component
$D(S_{\rho_i})$, by the random walk before $\tau_y $ is
of order one.  As Lemma~\ref{estimate} gives
very strong bounds on the size of this new component, all we need
is a crude upper bound on this expectation in terms of the size of
$D(S_{\rho_i})$.

We use the following lemma, which is basically an $h$-process result
(see \cite{Do01} for a general
exposition).
\begin{lemma}
\label{hprocess}
Consider a domain $D \subset \IZ^d $ of cardinality $N$.  Suppose that
the environment is such that for every (internal) boundary point $b\in
\partial D$ there exists a path from $b$ to a point $z$ with $V(z) \geq
\kappa$ which is entirely in the complement of $D \setminus \{b\}$
and of length less than $dL$.  Then for some universal $C_4 =
C_4(\kappa,L)$ and all $x \in D$ such that $Z^{\omega,x}_y>0$
\begin{equation}
  \label{gen}
  E_{Q_y^{\omega,x}}\left(\sum_{n=0}^{\tau_y-1} 
    \I_{\{S_n \in D\}}\right) \
  \le C_4N^{2/d} \log \left( 1+\sup_{u,v \in D}
  \frac{Z^{\omega,u}_y}
  {Z^{\omega,v}_y} \right).
\end{equation}
\end{lemma}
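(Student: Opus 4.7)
The key observation is that $Q_y^{\omega, x}$ is the Doob $h$-transform of the killed walk $\check{P}^{\omega, x}$ with harmonic function $h(z) := Z^{\omega, z}_y$. Using (\ref{mark3}) and the strong Markov property one finds
\[
g(x) := E_{Q_y^{\omega, x}}\left[\sum_{n=0}^{\tau_y - 1} \I_{\{S_n \in D\}}\right] = \frac{1}{h(x)} \sum_{z \in D} h(z)\, \check{G}(x, z),
\]
where $\check{G}(x, z) := \check{E}^{\omega, x}\bigl[\sum_{n=0}^{\tau_y - 1} \I_{\{S_n = z\}}\bigr]$. The plan is then to bound the $h$-weighted Green-function sum on the right by $C\, h(x)\, N^{2/d} \log(1 + R)$ with $R := \sup_{u,v \in D} h(u)/h(v)$.

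I would then collect two estimates on the killed walk. \emph{(i) A Faber--Krahn bound on the total time in $D$:} the standard Dirichlet-eigenvalue estimate gives $\max_{b}\, E^b[\tau_{D^c}] \le C(d)\, N^{2/d}$ for simple random walk in a lattice domain of cardinality $N$, and the hypothesis on $\partial D$ ensures that from every exit point of $D$ the killed walk is absorbed with probability at least $p_0 = p_0(L, \kappa) > 0$ before returning; hence the number of excursions into $D$ is geometrically bounded, yielding $\sum_{z \in D} \check{G}(x, z) \le C(L, \kappa, d)\, N^{2/d}$. \emph{(ii) A level-set hitting estimate:} for $t > 0$, with $A_t := \{z \in \IZ^d : h(z) \ge t\}$, optional stopping applied to the $\check{P}^{\omega, \cdot}$-martingale $h(S_n) \I_{\{n < \tau_\dagger\}}$ at $\tau(A_t) \wedge \tau_y \wedge \tau_\dagger$---using $h(S_{\tau(A_t)}) \ge t$, $h(y) = 1$, and $h(\dagger) = 0$---gives $\check{P}^{\omega, x}(\tau(A_t) < \tau_y \wedge \tau_\dagger) \le h(x)/t$.

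These combine via a layer-cake representation
\[
\sum_{z \in D} h(z)\, \check{G}(x, z) = \int_0^\infty \check{E}^{\omega, x}\bigl[\#\{n < \tau_y \wedge \tau_\dagger : S_n \in D \cap A_t\}\bigr]\, dt.
\]
By (i), the integrand is at most $C\, N^{2/d}$ uniformly in $t$; for $t > h(x)$, the strong Markov property at $\tau(A_t)$ together with (ii) and (i) applied from the first hitting point of $A_t$ sharpens this to $(h(x)/t)\cdot C\, N^{2/d}$. Splitting the integral at $t = h(x)$ and integrating,
\[
\sum_{z \in D} h(z)\, \check{G}(x, z) \le h(x)\, C\, N^{2/d} \Bigl(1 + \log\tfrac{\max_{z \in D} h(z)}{h(x)}\Bigr),
\]
and dividing by $h(x)$ together with $\max_D h / h(x) \le R$ produces (\ref{gen}).

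The main subtlety is extracting the logarithm rather than a factor $R$. A crude pointwise bound $h(z) \le \max_D h$ in the Green-function sum would only yield $g(x) \le C R\, N^{2/d}$, which is too weak for the subsequent use in Step 3 of Section~\ref{two}. Exploiting the $\check{P}^{\omega,\cdot}$-harmonicity of $h$ away from $y$ through the martingale estimate (ii), and then integrating the resulting $1/t$ tail against the layer-cake, is precisely what converts the factor $R$ into $\log R$; implementing this is the heart of the proof.
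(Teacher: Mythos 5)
Your proof is correct, but it follows a genuinely different route from the paper's. The paper works directly with a tail bound: it introduces the stopping times $\nu_k$ of successive returns to $\partial D$, combines (a) a survival estimate $E^x\bigl(e^{-\sum_{n\le \nu_k+dL}V(S_n)}\I_{\{\nu_k+dL<\tau_y\}}\bigr)\le(1-(2d)^{-dL}(1-e^{-\kappa}))^k$ with (b) the exit-time concentration $P^x\bigl(\sum_{n\le\nu_k+dL}\I_{\{S_n\in D\}}>C_6N^{2/d}k\bigr)\le e^{-c_6k}$, rewrites $Q_y^{\omega,x}(\text{time in }D>C_6N^{2/d}k)$ via the identity $E^x\bigl(e^{-\sum_{n<T}V(S_n)}\I_{\{T<\tau_y\}}Z_y^{\omega,S_T}\bigr)/Z_y^{\omega,x}$ (which is where the ratio $R=\sup_{u,v}Z_y^{\omega,u}/Z_y^{\omega,v}$ enters, through $Z_y^{\omega,S_T}\le R\,Z_y^{\omega,x}$), and then optimizes $k\sim\log(1+R)$. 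Your approach instead makes the $h$-process structure explicit: the Green-function identity $g(x)=h(x)^{-1}\sum_{z\in D}h(z)\check G(x,z)$, the Faber--Krahn bound on $\sum_z\check G(x,z)$ with geometric control of returns to $\partial D$, the optional-stopping level-set estimate $\check P^{\omega,x}(\tau(A_t)<\tau_y\wedge\tau_\dagger)\le h(x)/t$, and the layer-cake integration that produces the logarithm by integrating $h(x)/t$ from $h(x)$ to $\max_D h$. The two arguments use the same two basic ingredients (exit-time bound and $\partial D$-absorption), but your version exposes the potential-theoretic mechanism more cleanly, while the paper's argument is more elementary and self-contained (no appeal to $h$-transform Green-function identities or a layer-cake decomposition). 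One minor point worth tightening when writing this up: in estimate (ii) one should justify optional stopping for the bounded martingale $h(S_{n\wedge\tau_y\wedge\tau_\dagger})$ (it converges a.s.\ and in $L^1$, and Fatou on the stopped sequence gives $h(x)\ge t\,\check P^{\omega,x}(\tau(A_t)<\tau_y\wedge\tau_\dagger)$ without needing $\tau_y\wedge\tau_\dagger<\infty$ a.s.), and in step (i) the ``excursion'' should be phrased in terms of successive visits to $\partial D$ rather than to ``exit points of $D$'', since the hypothesis concerns internal boundary points.
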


\begin{proof}
  Again we suppose that $y\not\in D$ and leave the remaining case to
  the reader. Consider the stopping times $( \nu _k)_{k \geq 0}$ where
  $\nu_0 =0$ and
\[
\nu_{k+1} = \inf \{ n > \nu_k: S_n \in \partial D \}.
\]
We have as in Lemma~\ref{estimate} that for any initial $x \in D$ and
$k\in\IN$,
\begin{equation}
  \label{nu}
  E^x \left(e^{-\sum_{n=0}^{\nu_k+dL}
    V(S_n)}\I_{\{\nu_k+dL<\tau_y\}}\right) \le (1-(2d)^{-dL}(1-e^{-
  \kappa}))^k .
\end{equation}

If $D$ has cardinality $N$, then for some constants $c_5$ and $C_5$
(depending on $d$) and all $x\in D$ we have $P^x(\tau(D^c)>t)\le C_5
e^{-c_5tN^{-2/d}}$.  This follows, since by the local central limit
theorem (see e.g.\ \cite{Du05}), there exist universal and nontrivial
constants $c$ and $C$ so that $P^x(S_{
  CN^{2/d}} \in D) < c < 1$ uniformly over $x \in D$.  Therefore,
\begin{equation}
  \label{srw}
  P^x\left(\sum_{n=0}^{\nu_k+dL} \I_{\{S_n \in D\}} >
  C_6N^{2/d}k ,\,\nu_k+dL<\tau_y\right) \leq e^{-c_6k}
\end{equation}
for some universal $c_6,\,C_6 \in(0,\infty)$.  Let $T=T(x,k)$ be the stopping
time when the number of steps in $D$ numbers more than
$C_6N^{2/d}k$. Then for any $x$ and any $k$,
\begin{equation*}
  Q_y^{x, \omega } \left(\sum_{n=0}^{\tau_y-1} \I_{\{S_n \in D\}}>
    C_6N^{2/d}k \right) =\frac{E^x\left(e^{-\sum_{n=0}^{T-1}V(S_n)}\I_{\{T<\tau_y\}} 
    Z^{\omega,S_T}_y\right)}{Z^{
    \omega,x}_y}.
\end{equation*}
Partitioning the path space into the event
$\left\{\sum_{n=0}^{\nu_k+dL}\I_{\{S_n\in D\}}>C_6N^{2/d}k \right\}$ and
its complement and using (\ref{nu}) and (\ref{srw}) we get that the
last ratio is dominated by
\begin{equation}
  \label{lnl}
  \left( e^{-c_6k} + (1-(2d)^{-dL}(1- e^{-\kappa}))^k \right) \sup_{u,v \in D}
\frac{Z^{\omega,u}_y}{Z^{\omega,v}_y}.
\end{equation}
If we choose now
\[ k=\left[C_7 m\log \left(1+\sup_{u,v \in
      D}\frac{Z^{\omega,u}_y}{Z^{\omega,v}_y} \right)\right],\] then
it is easy to see that for sufficiently large $C_7$ the expression in
(\ref{lnl}) will be less than $e^{-c_7m}$ for some strictly positive
$c_7$ depending only on our choice of $C_7$. This immediately implies
(\ref{gen}).
\end{proof}

\subsection*{Step 4} Now we can estimate the time spent by the random
walk in $D(S_{\rho_i})\cap\mathcal{O}^c$. First, we notice that for
some $c_8,\,C_8\in(0,\infty)$ and all
$D(S_{\rho_i})\subset\mathcal{O}^c$ 
\[\sup_{u,v\in D(S_{\rho_i})}\frac{Z^{\omega,u}_y}
{Z^{\omega,v}_y}\le C_8e^{c_8\kappa|D(S_{\rho_i})|dL}\] uniformly over
all environments for which $Z^{\omega,S_{\rho_i}}_y>0$. The above
bound is obtained simply by forcing the walk that starts at $v$ first
to go to $u$. For this we can choose a path, which is entirely
contained in $D(S_{\rho_i})$ and has length less than
$|D(S_{\rho_i})|dL$.  Then (\ref{gen}) and (\ref{est}) give us that
there is $C_9=C_9(\kappa,L)$ such that
\begin{equation}
  \label{to}
  E_{Q_y} \left(
    \sum_{n=0}^{\tau_y-1} \I_{\{S_n \in D(S_{\rho_i})\cap
      \mathcal{O}^c\}}\,\big|\, \G_i\right) \leq
C_9.
\end{equation}

\subsection*{Step 5.} This step will complete the proof of
Theorem~\ref{d2}. We have
\begin{equation}
  \label{la}
  E_{Q_y} (\tau_y ) = E_{Q_y} \left(\sum_{n=0}^{\tau_y-1} \I_{\{S_n \in
    \mathcal{O}\}}\right)+ E_{Q_y} \left(\sum_{n=0}^{\tau_y-1} \I_{\{S_n \in
    \mathcal{O}^c\}}\right).
\end{equation}
Lemma~\ref{lem211} takes care of the first term in the right hand
side. We have a uniform bound on the expected time spent in every
connected component of $\mathcal{O}^c$ visited by the random walk
prior to $\tau_y$. The only question we have to answer is how many of
these components it visited. Notice that in the time interval
$[\rho_i,\rho_{i+1})$ the walk necessarily visits a new occupied
cube. We define stopping times $(\beta_i)_{i \geq 0}$ by $\beta _0 =
0, \ \beta_{i+1} = \inf \{n > \beta_i : S_n\in \mathcal{O}\setminus
D(S_{\beta_i})\}$. Then, clearly, $\rho_i \geq \beta_i $. Arguing as
in the proof of Lemma~\ref{estimate} we have for some
positive $c_9$ that $Q_y ( \beta_{m\|y\|} < \tau_y ) < e^{-c_9m\|y\|}$.
The last term of (\ref{la}) is equal to
\begin{multline*}
  E_{Q_y} \left(\sum_{i=0}^\infty \I_{\{\rho_i < \tau_y\}}
    \sum_{n=0}^{\tau_y-1} \I_{\{S_n \in \ D(S_{\rho_i})\cap
      \mathcal{O}^c\}}\right)\\\leq E_{Q_y} \left(\sum_{i=0}^\infty
    \I_{\{\rho_i < \tau_y\}}E_{Q_y}\left(\sum_{n=0}^{\tau_y-1} \I_{\{S_n \in \
        D(S_{\rho_i})\cap
      \mathcal{O}^c\}} \,\big|\, \G_i\right)\right) \ \leq \ C_9
  E_{Q_y} \left(\sum_{i=0}^\infty \I_{\{\rho_i < \tau_y\}} \right)\\\le C_9
  \sum_{i=0}^\infty Q_y(\beta_i < \tau_y) \le C_{10}\|y\|.
\end{multline*}
for some $C_{10}$. The proof is now complete.\qed

In the remainder of the section we show how to derive Corollary
\ref{der2} from Theorem~\ref{d2}.  The following simple lemma
holds in all dimensions. Its proof is very similar to the
proof of Corollary~2.3 in \cite{Sz95}.  
\begin{lemma}
  \label{au}
For every unit vector $s\in
  \IR^d$ \[\frac{d\beta_{\lambda+V}(s)}{d\lambda}\Big|_{\lambda=0+}\le
  \limsup_{\|y\| \to \infty} \frac{E_{Q_y^0 } (\tau_y )}{\|y\|}.\]
\end{lemma}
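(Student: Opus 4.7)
The plan is to exploit the elementary identity that replacing $V$ by $\lambda+V$ multiplies the integrand defining the annealed partition function by $e^{-\lambda\tau_y}$. Concretely, from the definitions (\ref{qpm})--(\ref{apm}),
\begin{equation*}
\IE E^0\bigl(\I_{\{\tau_y<\infty\}}e^{-\sum_{n=0}^{\tau_y-1}(\lambda+V(S_n,\omega))}\bigr)=Z^0_y\,E_{Q^0_y}\bigl(e^{-\lambda\tau_y}\bigr).
\end{equation*}
Applying $-r^{-1}\log$ with $y=[rs]$ and invoking (\ref{be}) for $V$ and for $\lambda+V$ (both non-negative i.i.d.\ potentials, so the limit exists in either case) gives the key identity
\begin{equation*}
\beta_{\lambda+V}(s)-\beta_V(s)=\lim_{r\to\infty}\Bigl(-\tfrac1r\log E_{Q^0_{[rs]}}\bigl(e^{-\lambda\tau_{[rs]}}\bigr)\Bigr).
\end{equation*}

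From here the argument is two lines. Jensen's inequality for the convex function $x\mapsto e^{-\lambda x}$ yields $E_{Q^0_y}(e^{-\lambda\tau_y})\ge\exp(-\lambda E_{Q^0_y}(\tau_y))$, whence
\begin{equation*}
-\log E_{Q^0_y}\bigl(e^{-\lambda\tau_y}\bigr)\le \lambda\,E_{Q^0_y}(\tau_y).
\end{equation*}
Specializing to $y=[rs]$, using $\|[rs]\|/r\to 1$, and taking the $\liminf$ in $r$ (which is bounded by the $\limsup$ over all $\|y\|\to\infty$) gives the linearized bound
\begin{equation*}
\beta_{\lambda+V}(s)-\beta_V(s)\le \lambda\,\limsup_{\|y\|\to\infty}\frac{E_{Q^0_y}(\tau_y)}{\|y\|}.
\end{equation*}
Dividing by $\lambda>0$ and sending $\lambda\to 0+$ finishes the proof, the right derivative existing by concavity of $\lambda\mapsto\beta_{\lambda+V}(s)$.

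The whole argument is a direct discrete transcription of \cite[Corollary~2.3]{Sz95}, and no step really presents an obstacle. The two minor technical points to verify are that $\beta_{\lambda+V}$ exists for $\lambda>0$ (standard from \cite{Ze98,Fl08}, since the essinf condition in (\ref{V}) is not needed for existence) and that $\|[rs]\|/r\to\|s\|=1$, which is what allows one to replace the limit over $r$ by a $\limsup$ over lattice sites $y$ with $\|y\|\to\infty$.
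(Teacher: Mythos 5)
Your proof is correct and is essentially the same as the paper's: the paper phrases the key bound $b(0,y,\lambda+V)-b(0,y,V)\le\lambda\,E_{Q_y}(\tau_y)$ via concavity of $\lambda\mapsto b(0,y,\lambda+V)$ and its derivative at $0$, whereas you obtain the same inequality directly from Jensen applied to $x\mapsto e^{-\lambda x}$ under $Q_y^0$, but these are the same observation. Both arguments then finish identically: divide by $\|y\|$, pass to the $\limsup$ along $y=[rs]$, and use concavity of $\lambda\mapsto\beta_{\lambda+V}(s)$ to send $\lambda\to 0+$.
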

\begin{proof}
  Fix any unit vector $s\in\IR^d$ and let
  $y=[rs]$. Observe that
 \begin{align*}
   -\frac{d}{d\lambda}\left(\liminf_{r\to\infty}\frac{1}{r}\,\log
     E_{Q_y}\left(e^{-\lambda \tau_y}\right)\right)\bigg|_{\lambda=0+}&=
   \frac{d}{d\lambda}\beta_{\lambda+V}(s)
   \bigg|_{\lambda=0+},\quad\text{and}\\
   -\liminf_{r\to\infty}\frac{1}{r}\left(\frac{d}{d\lambda}\log
   E_{Q_y}\left(e^{-\lambda \tau_y}\right)\bigg|_{\lambda=0+}\right)&=
   \limsup_{r\to\infty}\frac1{r}\,\frac{E_{Q_y}\left(\tau_y e^{-\lambda
         \tau_y}\right)}{E_{Q_y}\left(e^{-\lambda
         \tau_y}\right)}\bigg|_{\lambda=0+}\\&=
   \limsup_{\|y\|\to\infty}\frac{E_{Q_y}\tau_y}{\|y\|}.
 \end{align*}
 The statement of the lemma is an easy direction of the above exchange
 of limits. Since $\beta_{\lambda+V}$ is an increasing concave
 function of $\lambda$ on $[0,\infty)$ (see
 \cite[Theorem~A(b)]{Fl07}), it is enough to show that for each
 $\lambda>0$
\[
\limsup_{\|y\|\to\infty}\frac{E_{Q_y}\tau_y}{\|y\|}\ge
\frac{\beta_{\lambda+V}(s)-\beta_V(s)}{\lambda}.
\]
Let $b(0,y,V):=-\log \mathbb{E}E^0e^{-\sum_{n=0}^{\tau_y-1}
  V(S_n)}$. Then $b(0,y,\lambda+V)$ is a concave increasing
function of $\lambda$ on $[0,\infty)$ and 
\[E_{Q_y}\tau_y=\frac{d}{d\lambda}\left(-\log
  E_{Q_y}\left(e^{-\lambda
      \tau_y}\right)\right)\Big|_{\lambda=0}=\lim_{\lambda\to
  0+}\frac{b(0,y,\lambda+V)-b(0,y,V)}{\lambda}. \] By concavity, for
each $\lambda>0$,
\begin{align*}
  \lim_{\|y\|\to\infty} \frac{E_{Q_y}\tau_y}{\|y\|}
  &\ge\lim_{\|y\|\to\infty}
  \frac{b(0,y,\lambda+V)-b(0,y,V)}{\|y\|\lambda}\\&=\frac1\lambda
  \lim_{\|y\|\to\infty}\frac{b(0,y,\lambda+V)-b(0,y,V)}{\|y\|}=
  \frac{\beta_{\lambda+V}(s)-\beta_V(s)}{\lambda}.\qedhere
\end{align*}
\end{proof}
\section{Asymptotic speed in one dimension}\label{three}

We start by introducing some notation. For $x\in\IZ$ define
$\tau_x^{(1)}:=\tau_x$, and for $m\in\IN$ set
  \begin{align}
    \tau_x^{(m+1)}:&=\inf\{n>\tau_x^{(m)}\,:\,S_n=x\};
    \label{hits}\\
    \ell_y(x):&=\#\{n\in\{0,1,\dots,\tau_y-1\}:\,S_n=x\}.\label{lt}
  \end{align}
  In addition to $Q^\omega_y$ and $Q_y$ we shall need the
  following measures and partition functions:
  \begin{align*}
    &Q^\omega_{0,r}(\cdot)=
    \check{P}^\omega(\cdot\,|\,\tau_r<\tau_0^{(2)},\tau_r<\infty),\quad
    Z^\omega_{0,r}=\check{P}^\omega(\tau_r<\tau_0^{(2)},\tau_r<\infty);\\
    &Q_{0,r}(\cdot)= \check{P}(\cdot\,|\,\tau_r<\tau_0^{(2)},\tau_r<\infty),\quad
    Z_{0,r}=\check{P}(\tau_r<\tau_0^{(2)},\tau_r<\infty);\\
    &\Qb_{0,r}(\cdot)=
    \check{P}(\cdot\,|\,\tau_r<\tau_0^{(2)},\tau_r<\infty,\ell_r(x)\ge
    2,\,x\in\{1,2,\dots,r-1\});\\
    &\Zb_{0,r}=\check{P}(\tau_r<\tau_0^{(2)},\tau_r<\infty,\ell_r(x)\ge
    2,\,x\in\{1,2,\dots,r-1\}).
  \end{align*}
  Denote by $X_y$ the smallest non-negative integer in $[0,y]$, which
  is visited by the walk at most once up to the time $\tau_y$, i.e.
\begin{equation}
  \label{X_y}
  X_y=\min\{x\in\{0,1,\dots,y\}:\ \ell_y(x)\le 1\}.
\end{equation}
We shall refer to $X_y$ and all points between $0$ and $y$ inclusively
that were visited at most once up to time $\tau_y$ as ``renewal
points''. We use ``at most once'' instead of ``exactly once'' just to
include $y$ in the set of renewal points. The main idea of our proof
is to obtain (\ref{v}) using renewal theory.

The main ingredient of the proof of (\ref{v}) is
the following proposition.
\begin{proposition}\label{ingr}
  There is a constant $v\in(0,1)$ such
  that \[\lim_{y\to\infty}\frac{E_{Q_{0,y}}(\tau_y)}{y}=\frac1{v}.\]
\end{proposition}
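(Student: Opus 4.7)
My plan is to set up an i.i.d.\ regeneration structure at the renewal points (those $x\in[0,y]$ with $\ell_y(x)\le 1$) and then apply the renewal law of large numbers.

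The crucial one-dimensional observation is that if $x\in(0,y)$ is visited exactly once before $\tau_y$, then the pre-$x$ trajectory depends only on $V$ restricted to $\{1,\dots,x-1\}$ while the post-$x$ trajectory depends only on $V$ restricted to $\{x,\dots,y-1\}$. Since $V$ is i.i.d., under the annealed measure $Q_{0,y}$ these two segments are conditionally independent given that $x$ is a renewal point. The successive renewal points therefore decompose the path into independent excursions, and it is this regeneration that permits a classical renewal argument despite the annealed averaging.

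Next I would pass to an infinite-volume measure $Q_{0,\infty}$ by sending $y\to\infty$ in $Q_{0,y}$. Under $Q_{0,\infty}$, let $0=R_0<R_1<R_2<\cdots$ be the successive renewal points; by the regeneration property the pairs $(R_{i+1}-R_i,\tau_{R_{i+1}}-\tau_{R_i})_{i\ge 0}$ are i.i.d. The existence of this limit, together with the convergence of the joint law of $(X_y,\tau_{X_y})$ under $Q_{0,y}$ to its counterpart under $Q_{0,\infty}$, is what I expect Lemmas~\ref{conw} and~\ref{conX} to provide. The exponential tail $Q_{0,\infty}(R_1>n)\le Ce^{-cn}$ should follow from the crucial Lemma~\ref{probexp} (built on Theorem~\ref{mess}), and combined with the local-time/hitting-time estimates of Lemma~\ref{taux} it gives $E_{Q_{0,\infty}}(\tau_{R_1})<\infty$. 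Setting $v:=E(R_1)/E(\tau_{R_1})$, nearest-neighborness ($\tau_{R_1}\ge R_1$) forces $v\le 1$, strict because a typical inter-renewal excursion contains backtracking steps with positive probability; the bound $v>0$ is equivalent to the finiteness of $E(\tau_{R_1})$.

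The end-game is then standard: the SLLN on the i.i.d.\ pairs gives $\tau_{R_n}/R_n\to 1/v$ a.s.\ under $Q_{0,\infty}$; sandwiching $R_{N(y)}\le y<R_{N(y)+1}$ and controlling the overshoot by the exponential bound from Lemma~\ref{probexp} upgrades this to $\tau_y/y\to 1/v$; and uniform integrability, furnished again by these exponential tails together with Lemma~\ref{taux}, converts the a.s.\ limit into the asserted convergence of expectations $E_{Q_{0,y}}(\tau_y)/y\to 1/v$. The principal obstacle is the regeneration set-up itself: verifying that conditioning on the rare event $\{\tau_y<\tau_0^{(2)},\tau_y<\infty\}$ combined with annealed averaging of the potential preserves the independence of pre-$x$ and post-$x$ excursions at renewal points, and that $Q_{0,y}$ mixes well enough to admit an infinite-volume limit with integrable inter-renewal blocks. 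The exponential control embodied in Theorem~\ref{mess} is precisely what breaks the long-range dependences introduced by the normalizing constant $Z_{0,y}$ and so legitimizes the renewal construction.
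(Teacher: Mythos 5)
Your central observation --- that a site visited exactly once decouples the pre- and post-excursions, so that the annealed weight factorizes across renewal points and a renewal-theoretic argument applies --- is exactly the engine of the paper's proof, and you correctly identify Lemma~\ref{probexp} as the crucial input. But the paper's implementation is different from yours, and your route has a gap. The paper never constructs an infinite-volume measure $Q_{0,\infty}$; it instead writes the \emph{exact} finite-$y$ identity
\[
E_{Q_{0,y}}(\tau_y)=\frac{\sum_{k}\sum_{0=x_0<\cdots<x_k=y}\prod_{j}\Zb_{0,x_j-x_{j-1}}\sum_{i}E_{\Qb_{0,x_i-x_{i-1}}}(\tau_{x_i-x_{i-1}})}{\sum_{k}\sum_{0=x_0<\cdots<x_k=y}\prod_{j}\Zb_{0,x_j-x_{j-1}}},
\]
multiplies top and bottom by $e^{\beta y}$ to convert the weights $\Zb_{0,r}$ into the probability kernel $q(r)=e^{\beta r}\Zb_{0,r}$ (Lemma~\ref{probexp} gives $\sum_r q(r)=1$ and exponential decay), and then reads off $E_{Q_{0,y}}(\tau_y)=E\big[\sum_{i:X_i\le y}g(X_i-X_{i-1})\mid A_y\big]$, where $(X_i)$ is the renewal sequence with kernel $q$, $g(r)=E_{\Qb_{0,r}}(\tau_r)$, and $A_y=\{y \text{ is a renewal time}\}$. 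The LLN, the renewal theorem (so that $P(A_y)$ stays bounded away from zero), and the polynomial bound on $g$ from Lemma~\ref{conw} then give the limit $\big(\sum_r g(r)q(r)\big)/\big(\sum_r rq(r)\big)$ directly.

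Your route via $Q_{0,\infty}$ glosses over the essential structural point that $Q_{0,y}$ is a \emph{bridge} measure: on $\{\tau_y<\tau_0^{(2)},\,\tau_y<\infty\}$ the endpoint $y$ is automatically a renewal point, and the paper encodes this by conditioning the $q$-renewal process on $A_y$. Your ``sandwich $R_{N(y)}\le y<R_{N(y)+1}$ and control the overshoot'' step treats $y$ as a generic level reached under a free (unconditioned) measure rather than a forced renewal under $Q_{0,y}$, and it silently identifies $\tau_y$ under $Q_{0,\infty}$ with $\tau_y$ under $Q_{0,y}$ --- a genuine change of measure that you would still have to control. Moreover, constructing $Q_{0,\infty}$ and verifying that the inter-renewal blocks are i.i.d.\ under it is extra work that the finite-volume decomposition avoids entirely; the factorization of $\Zb_{0,y}$ over renewal skeletons already hands you the i.i.d.\ structure without any limiting procedure. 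Finally, a few attributions are off: Lemma~\ref{conX} and Lemma~\ref{taux} are used to reduce (\ref{v}) to Proposition~\ref{ingr}, not inside the proof of the proposition; the inputs that actually enter here are Lemma~\ref{probexp} (for $q$) and the first half of Lemma~\ref{conw} (subexponential growth of $g$).
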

We now indicate how this implies
(\ref{v}). The formal argument will be given after the proof of
Proposition~\ref{ingr}.  We have
\begin{multline}\label{deco}
  \frac{E_{Q_y}(\tau_y)}{y}=\frac{E_{Q_y}(\tau_{X_y})}{y}+
  \frac{1}{y}\,E_{Q_y}\left[E_{Q_y}(\tau_y-\tau_{X_y}\,|\,X_y)\right]
  \\=\frac{E_{Q_y}(\tau_{X_y})}{y}+
  \frac{1}{y}\,E_{Q_y}\left[E_{Q_{0,y-X_y}}(\tau_{y-X_y}\,|\,X_y)\right].
\end{multline}
The following lemma, whose proof is postponed until Section~\ref{tr},
takes care of the first term in the right-hand side of (\ref{deco}).
\begin{lemma}
 \label{taux}$\displaystyle\lim_{y\to\infty}\dfrac{E_{Q_y}(\tau_{X_y})}{y}=0$.
\end{lemma}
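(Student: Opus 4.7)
The plan is to prove the stronger claim that $E_{Q_y}(\tau_{X_y})$ is bounded \emph{uniformly} in $y$; the stated limit then follows trivially. The argument rests on two ingredients.

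First, I would establish an exponential tail bound for $X_y$ under $Q_y$: there exist constants $C,c>0$ independent of $y$ such that
\[
  Q_y(X_y \ge k) \;\le\; C e^{-c k}
\]
for every $k \in \IN$ and every sufficiently large $y$. This is natural, because $\{X_y \ge k\}$ is precisely the event that each of the sites $0,1,\dots,k-1$ is visited at least twice before time $\tau_y$; each such forced return exposes the walk to additional killing by the potential, which is exponentially costly under the annealed measure. I expect the companion Lemma~\ref{probexp} (built on the master exponential estimate Theorem~\ref{mess}) to deliver precisely this type of estimate.

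Second, I would exploit the pathwise identity
\[
  \tau_{X_y} \;=\; \sum_{x < X_y} \ell_y(x),
\]
which holds because any visit to a site $x < X_y$ after time $\tau_{X_y}$ would force the walk to cross $X_y$ a second time, contradicting $\ell_y(X_y)\le 1$; hence the walk stays strictly to the right of $X_y$ for all $n \in (\tau_{X_y},\tau_y)$ and the local times at sites below $X_y$ are accumulated entirely before $\tau_{X_y}$. Taking expectations and decomposing on the value of $X_y$, it suffices to bound
\[
  E_{Q_y}\!\left(\sum_{x<k}\ell_y(x)\,\I_{\{X_y=k\}}\right)
\]
by something summable in $k$. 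On $\{X_y=k\}$ the walk lives in $(-\infty,k-1]$ up to time $\tau_{X_y}$, and both the minimum site $\min_{n\le \tau_y}S_n$ and the individual local times admit annealed exponential moments under $Q_y$ via the same cost-of-excursion reasoning, again feeding off Theorem~\ref{mess} (and, for the local-time estimate, an $h$-process style argument analogous to Lemma~\ref{hprocess} applied in one dimension).

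Combining these two bounds yields an estimate of the shape $E_{Q_y}(\tau_{X_y}\,\I_{\{X_y=k\}}) \le C'(k+1)^{a}e^{-c' k}$, which sums in $k$ to a finite constant independent of $y$. The principal obstacle is the tail bound on $X_y$, a genuinely annealed statement about the unlikely accumulation of returns; the remaining work is essentially bookkeeping once Theorem~\ref{mess} is in place.
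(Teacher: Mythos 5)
Your skeleton — bound $Q_y(X_y\ge k)$ exponentially (which is indeed Lemma~\ref{conX}) and combine with a pathwise bound on $\tau_{X_y}$ — is the right shape, and the identity $\tau_{X_y}=\sum_{x<X_y}\ell_y(x)$ is correct. However, the proposal rests on an unsupported claim that is both the crux of the matter and, as far as the paper's machinery goes, \emph{not} bookkeeping: that $\min_{n\le\tau_y}S_n$ and the local times $\ell_y(x)$ for $x<0$ have annealed exponential moments under $Q_y$, \emph{uniformly in $y$}. Theorem~\ref{mess} controls environment events, not the walk's excursions to the negative half-line, and Lemma~\ref{bias} only dominates $\ell_y(x)$ by the SRW local time, whose mean for $x<0$ is $2y$. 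The only tool in the paper that touches the potential to the left of $0$ is Lemma~\ref{lemA}, and it carries an unavoidable factor of $y$: the bound $Q_y(\text{few reasonable sites in }[-M,0))\le 2y\,\IP(\text{few reasonable sites})$ is useless unless $M\gtrsim\log y$. Consequently the route ``bad environment near $0$ is unlikely under $Q_y$, good environment near $0$ kills left excursions'' gives a tail on the minimum that is exponential in $M$ only after a $\log y$ correction, and the expected time spent at negative sites is not visibly $O(1)$.

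This is exactly why the paper does \emph{not} attempt the uniform bound you propose. Its proof of Lemma~\ref{taux} splits at the deterministic but $y$-dependent threshold $-y^{1/4}$: Lemma~\ref{vn} kills everything below $-Ry$, Lemma~\ref{bd1} handles $[-Ry,-y^{1/4}]$ by paying the $2y$ factor from Lemma~\ref{lemA} against the exponential-in-$y^{1/4}$ cost of a sparse environment, and Lemma~\ref{bd2} handles $[-y^{1/4},X_y)$ only for deviations of size $x\ge y^{3/4}$, because the SRW-comparison (Lemma~\ref{bias}) for the time spent in $[-y^{1/4},x^{1/3})$ has a mean of order $y^{1/4}x^{1/3}$ and hence only becomes exponentially small once $x$ dwarfs that. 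The net bound the paper actually obtains is $E_{Q_y}(\tau_{X_y})=O(y^{3/4})$, which is all the lemma needs but is materially weaker than ``uniformly bounded''. So you are proposing to prove a genuinely stronger statement than the paper's, and the missing ingredient — a $y$-uniform exponential tail for $Q_y(\min_{n\le\tau_y}S_n\le -M)$ — would require a new argument (for instance, an $h$-process/excursion computation weighted by the annealed cost $e^{-\Lambda_V}$, where the Lyapunov normalisation $Z_y\sim e^{-\beta y}$ has to cancel cleanly), not a routine application of Theorem~\ref{mess}. Either supply that argument, or retreat to the paper's threshold decomposition, which cleanly delivers the $o(y)$ conclusion.
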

\noindent The second term in (\ref{deco}) will converge to $1/v$ by
Proposition~\ref{ingr}, provided that we have sufficient control on
$X_y$.

\begin{proof}[Proof of Proposition~\ref{ingr}]
  The proof relies on two technical lemmas. We shall state them as
  needed and supply proofs in Section~\ref{tr}.

Notice that $0$ is the first renewal point under
$Q_{0,y}$. Decomposition of the path space over all possible renewal
points in $[0,y]$ gives
\begin{align}\nonumber
  E_{Q_{0,y}}(\tau_y)&=\frac{E_{\check{P}}(\tau_y;\tau_y<\tau_0^{(2)},\tau_y<\infty)}
  {\check{P}(\tau_y<\tau_0^{(2)},\tau_y<\infty)}\\&=\frac{\sum\limits_{k=1}^y\,
    \sum\limits_{0=x_0<x_1<\dots<x_k=y}
    \prod\limits_{j=1}^k\Zb_{0,x_j-x_{j-1}}\sum\limits_{i=1}^k
    E_{\Qb_{0,x_i-x_{i-1}}}(\tau_{x_i-x_{i-1}})}
  {\sum\limits_{k=1}^y\sum\limits_{0=x_0<x_1<\dots<x_k=y}
    \prod\limits_{j=1}^k\Zb_{0, x_j-x_{j-1}}}.\label{decoz}
\end{align}
If the weights $\Zb_{0,r},\ r\ge 1$, formed a probability distribution
then the denominator would simply be the probability that $y$ is a
renewal point of a renewal sequence with this distribution, and the
numerator would be the expectation of some function of the renewal
lengths up to $y$ restricted to the set where $y$ is a renewal point.
But, as it turns out, these weights do not add up to 1. We shall have
to make an adjustment that is based on the following important fact.
\begin{lemma}\label{probexp}
  Let $\beta:=\beta_V(1)$ and $q(r):=e^{\beta r}\Zb_{0,r}$ Then there
  is an $\epsilon>0$ and $r_0>0$ such that for all $r\ge r_0$
\begin{equation}
  \label{massgap}
  q(r)\le e^{-\epsilon r}.
\end{equation}
Moreover, $\sum_{r=1}^\infty q(r)=1$.
\end{lemma}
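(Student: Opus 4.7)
The plan is to derive both statements from a renewal decomposition of $Z_{0,y}$ combined with the mass gap supplied by Theorem~\ref{mess} (proved in Section~\ref{four}).

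First, I would establish the decomposition. Calling $x\in[0,y]$ a renewal point of a trajectory contributing to $Z_{0,y}$ if $\ell_y(x)\le 1$, let $0=x_0<x_1<\cdots<x_k=y$ be the successive renewal points. By one-dimensional, nearest-neighbour topology, between $\tau_{x_{j-1}}$ and $\tau_{x_j}$ the walk is confined to $[x_{j-1},x_j]$: an excursion below $x_{j-1}$ would force a revisit of $x_{j-1}$, and to cross $x_j$ before $\tau_{x_j}$ is impossible. By definition every intermediate site in $(x_{j-1},x_j)$ is visited at least twice, so the excursion is exactly a direct $\Zb$-crossing. Different excursions charge disjoint sets of sites (the potential at $x_j$ is never used, as the walk is at $x_j$ only at time $\tau_{x_j}$), so the i.i.d.\ averaging factorises to give
\begin{equation*}
Z_{0,y}=\sum_{k=1}^{y}\sum_{0=x_0<x_1<\cdots<x_k=y}\prod_{j=1}^{k}\Zb_{0,\,x_j-x_{j-1}}.
\end{equation*}
Multiplying by $e^{\beta y}z^{y}$ and summing over $y$ rearranges into $\tilde Z(z)=F(z)/(1-F(z))$, where $\tilde Z(z):=\sum_{y\ge 1}e^{\beta y}Z_{0,y}z^{y}$ and $F(z):=\sum_{r\ge 1}q(r)z^{r}$.

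Second, the mass gap. Theorem~\ref{mess} is tailored to furnish a bound $\Zb_{0,r}\le C\,e^{-(\beta+\epsilon_{0})r}$ for some $\epsilon_{0}>0$ and all $r$ sufficiently large. Dividing by $e^{-\beta r}$ gives $q(r)\le C\,e^{-\epsilon_{0}r}$, and after slightly shrinking $\epsilon_{0}$ and enlarging $r_{0}$ we obtain $q(r)\le e^{-\epsilon r}$ for all $r\ge r_{0}$. In particular $F$ has radius of convergence strictly greater than $1$ and is real-analytic on a neighbourhood of $[0,1]$.

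Third, the normalisation $F(1)=1$. I identify the radius of convergence of $\tilde Z$ as exactly~$1$. The inequality $Z_{0,y}\le Z_y$ combined with the definition of $\beta$ gives radius $\ge 1$. For the matching lower bound one needs $Z_{0,y}\ge e^{-(\beta+o(1))y}$; in one dimension this follows from the identity $Z_y^{\omega}=Z_{0,y}^{\omega}/(1-v^{\omega})$, obtained by summing over the number of returns of the killed walk to $0$ before $\tau_y$ ($v^\omega$ being the corresponding return probability), together with a simple averaging argument showing $Z_y\le e^{o(y)}Z_{0,y}$. With the radius of $\tilde Z$ equal to $1$ and $F$ continuous at $1$, the identity $\tilde Z(z)=F(z)/(1-F(z))$ forces $F(1)=1$: if $F(1)<1$, then $\tilde Z$ would converge past $z=1$, contradicting radius $1$; if $F(1)>1$, then $F(0)=0$ and continuity would produce a $z^{\ast}\in(0,1)$ with $F(z^{\ast})=1$, yielding a pole of $\tilde Z$ strictly inside the unit disc—also a contradiction.

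The main obstacle is Theorem~\ref{mess} itself: extracting a \emph{strict} exponential improvement for $\Zb_{0,r}$ beyond the benchmark $e^{-\beta r}$ decay. Heuristically, forcing every intermediate site to be visited at least twice roughly doubles the minimum traversal time from $r$ to $2r$, and each extra step pays an additional exponential cost in the potential; turning this into a quantitative bound is the content of Section~\ref{four}.
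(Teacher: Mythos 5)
Your overall route coincides with the paper's: a renewal decomposition of $Z_{0,y}$ over renewal points, followed by the generating-function identity $\sum_{y\ge1}s^{y}e^{\beta y}Z_{0,y}=\sum_{k\ge1}F(s)^{k}$ with $F(s)=\sum_{r\ge1}q(r)s^{r}$, and the observation that the radius of convergence of the left side is exactly $1$ (which the paper packages as Lemma~\ref{0}, via $Z_{0,y}\ge Z_y/(2y)$ through Lemma~\ref{bias}) forces $F(1)=1$. Your elaboration of why $F(1)$ can be neither $<1$ nor $>1$ is a correct expansion of what the paper calls ``immediate,'' and your alternative $Z_y^{\omega}=Z_{0,y}^{\omega}/(1-v^{\omega})$ would also work with a bit of care, though the paper's route through Lemma~\ref{bias} is cleaner.

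The genuine gap is in the mass gap step, where you assert that ``Theorem~\ref{mess} is tailored to furnish a bound $\Zb_{0,r}\le C\,e^{-(\beta+\epsilon_{0})r}$'' and describe the mechanism as a doubling of the traversal length. Neither is right. Theorem~\ref{mess} is purely an \emph{environment} estimate: it controls the conditional probability, under $Q_{0,y}$ or $Q_y$, of prescribed patterns of reasonable/unreasonable sites in an interval, and by itself says nothing about $\Zb_{0,r}$. The paper gets from it to the mass gap in two further steps: first Lemma~\ref{expbd} (a direct consequence of Theorem~\ref{mess}) guarantees that with $Q_{0,r}$- and $Q_r$-probability $1-e^{-cr}$ a positive density of sites in $(0,r)$ is reasonable; second, and this is the real engine, Lemma~\ref{geomb} shows via a coupling with a biased ($p=2/3$) random walk that conditional on having $\Omega(r)$ reasonable sites, the event ``no renewal point in $(0,r)$'' has $Q_r$-probability $e^{-c'r}$. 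One then writes $\Zb_{0,r}\le Q_{0,r}(K^{c})Z_{0,r}+Q_r(K\cap B(0,r,r))Z_r$ and bounds each term. Your heuristic (``forcing double visits doubles the time'') is misleading: most sites may have $V=0$, so double visits cost nothing at those sites; the exponential improvement comes only from the reasonable sites, which act as absorption points and force the killed walk to behave like a biased walk that renews exponentially fast. Without the coupling idea of Lemma~\ref{geomb}, the mass gap does not follow.
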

\begin{remark}{\em
  A result analogous to $\sum_{r=1}^\infty q(r)=1$ is essentially
  shown in \cite{Zy09}.}
\end{remark}
\noindent Assume the above lemma.
Multiplying and dividing (\ref{decoz}) by
$e^{\beta y}$ and writing $e^{\beta
  y}$ as $\prod_{j=1}^ke^{\beta(x_j-x_{j-1})}$ we get
\begin{align}\nonumber
  &E_{Q_{0,y}}(\tau_y)=\\&\frac{\sum\limits_{k=1}^y\,
    \sum\limits_{0=x_0<x_1<\dots<x_k=y}
    \prod\limits_{j=1}^ke^{\beta(x_j-x_{j-1})}
    \Zb_{0,x_j-x_{j-1}}\sum\limits_{i=1}^k
    E_{\Qb_{0,x_i-x_{i-1}}}(\tau_{x_i-x_{i-1}})}
  {\sum\limits_{k=1}^y\sum\limits_{0=x_0<x_1<\dots<x_k=y}
    \prod\limits_{j=1}^ke^{\beta (x_j-x_{j-1})}\Zb_{0,
      x_j-x_{j-1}}}\nonumber\\&=\frac{\sum\limits_{k=1}^y
    \sum\limits_{0=x_0<x_1<\dots<x_k=y}
    \prod\limits_{j=1}^kq(x_j-x_{j-1})\sum\limits_{i=1}^k
    E_{\Qb_{0,x_i-x_{i-1}}}(\tau_{x_i-x_{i-1}})}
  {\sum\limits_{k=1}^y\sum\limits_{0=x_0<x_1<\dots<x_k=y}
    \prod\limits_{j=1}^kq(x_j-x_{j-1})} . \label{decoq} 
\end{align}
If we denote by $(X_i)_{i\ge 0}$ the renewal sequence with $X_0=0$
corresponding to the probability kernel $q(\cdot)$, by $A_y$ the event
that $y$ is a renewal point, and set $g(r)=E_{\Qb_{0,r}}(\tau_r)$,
then the above expression is equal to \[E_{Q_{0,y}}\Big(\sum_{i:\,X_i
  \le y}g(X_i-X_{i-1})\,\Big|\,A_y\Big). \] The next lemma provides a
bound on the growth of $g(r)$. This bound is not optimal but it will
be sufficient for our purposes as all we need to know is that $g(r)$
has subexponential growth.
\begin{lemma}\label{conw}
  There are constants $M_1$ and $M_2$ such that for all $r\ge
  1$ \[E_{\Qb_{0,r}}(\tau_r)\le M_1 r^3\ \text{and }\
  E_{Q_{0,r}}(\tau_r)\le M_2 r^3.\]
\end{lemma}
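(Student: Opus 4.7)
The plan is to bound both expectations via the identity $E(\tau_r)=\sum_{t\ge 0}Q(\tau_r>t)$, by establishing a tail estimate of the form $Q(\tau_r>t)\le Ce^{Cr-ct/r^2}$ with constants independent of $r$; summing and splitting at $t\sim r^3$ then yields the required cubic bound.

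For the numerator of $Q_{0,r}(\tau_r>t)$, observe that the conditioning $\tau_r<\tau_0^{(2)}$ forces the walk to take its first step to $+1$, so that on the event $\{\tau_r>t,\,\tau_r<\tau_0^{(2)}\}$ the walk lies in $\{1,\dots,r-1\}$ for all $1\le k\le t$. Dropping the potential weight ($e^{-V}\le 1$) and the further conditioning on $\tau_r<\infty$, one obtains
\[
\IE\check{P}(\tau_r>t,\,\tau_r<\tau_0^{(2)},\,\tau_r<\infty)\ \le\ \tfrac{1}{2}\,P^{1}(\tau_0\wedge\tau_r>t-1)\ \le\ Ce^{-ct/r^2},
\]
by the standard spectral-gap exit-time estimate for the simple random walk on $\{0,1,\dots,r\}$ absorbed at the boundary. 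For the denominator, I would use explicit paths: for $Z_{0,r}$, the monotone path $0\to 1\to\cdots\to r$ immediately gives $Z_{0,r}\ge(\IE(e^{-V})/2)^r=:e^{-\beta_0 r}$. For $\Zb_{0,r}$, take a path of length at most $3r$ from $0$ to $r$ that avoids returning to $0$ and visits every interior point at least twice and at most three times --- for instance, go up from $0$ to $r-1$, back down to $1$, and then up to $r$. The annealed weight factorises as $\prod_{x=0}^{r-1}\IE(e^{-\ell_r(x)V})$ with each $\ell_r(x)\le 3$, and since the assumption $\operatorname{essinf}V=0$ forces $\IE(e^{-3V})>0$, this yields $\Zb_{0,r}\ge e^{-\beta_1 r}$ for some $\beta_1>0$ independent of $r$.

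Combining the two ingredients gives $Q_{0,r}(\tau_r>t)\le Ce^{\beta_0 r-ct/r^2}$ and similarly $\Qb_{0,r}(\tau_r>t)\le C'e^{\beta_1 r-ct/r^2}$. Splitting $\sum_{t\ge 0}Q(\tau_r>t)$ at $t^*:=(2\beta_0/c)r^3$, the contribution from $t\le t^*$ is at most $t^*=O(r^3)$, and the tail $t>t^*$ is dominated by a geometric series of total mass $O(r^2)$. This produces both $E_{Q_{0,r}}(\tau_r)\le M_2 r^3$ and $E_{\Qb_{0,r}}(\tau_r)\le M_1 r^3$. The delicate point is the lower bound on $\Zb_{0,r}$ with constant $\beta_1$ independent of $r$: one must find a path visiting every interior point at least twice while keeping all individual local times uniformly bounded, so that the annealed weight factorises into a product of $r$ terms each bounded below by a fixed positive constant; the explicit zigzag path above handles this in one dimension. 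The resulting cubic bound is far from optimal, but it suffices since Proposition~\ref{ingr} only requires subexponential growth of $g(r)=E_{\Qb_{0,r}}(\tau_r)$.
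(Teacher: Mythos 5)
Your proposal is correct and follows essentially the same route as the paper: an explicit witness path with uniformly bounded local times gives the exponential lower bound $\Zb_{0,r}\ge e^{-\beta_1 r}$, an exit-time estimate for the unweighted simple random walk on $(0,r)$ controls the unnormalized tail, and the two pieces are combined by a split at a cubic cutoff $t^*\sim r^3$. The paper packages the tail estimate via Cauchy--Schwarz with a second-moment bound on the exit time rather than your direct tail-sum with a spectral-gap inequality, and its witness path visits each interior site exactly twice rather than at most three times, but these are cosmetic differences.
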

\begin{remark}{\em
  The last claim is not needed at this point. It will be used only in
  Section~\ref{tr}.}
\end{remark}
The law of large numbers and the renewal theorem tell us that, as
$y\to\infty$,
\begin{equation*}
  \frac{1}{y}\sum_{i:X_i \le
  y}g(X_i-X_{i-1})\overset{\text{$Q_{0,y}$-a.s.}}{\to}\frac {\sum_{r=1}^\infty
  g(r)q(r)}{\sum_{r=1}^\infty rq(r)}, \ \ \text{and } Q_{0,y}(A_y)\to
\frac{1}{\sum_{r=1}^\infty rq(r)}.
\end{equation*} 
The above relations together with Lemma~\ref{conw} and (\ref{massgap})
allow us to conclude that
\[\lim_{y\to\infty}\frac{E_{Q_{0,y}}(\tau_y)}{y}=
\frac{\sum_{r=1}^\infty g(r)q(r)}{\sum_{r=1}^\infty
  rq(r)}=:\frac1v<\infty.\qedhere\]
We note that the fact that $q(r) > 0 $ for $r >2$ and that for such $r, \ g(r) > r$ 
implies that $v$ is strictly less than $1$. 
\end{proof}

To proceed with the proof of (\ref{v}) we need one more auxiliary result.
\begin{lemma}
  \label{conX}
  There are $c,\,C\in(0,\infty)$ such that $Q_y(X_y>r)\le
  Ce^{-cr}$ for all $0\le r<y$.
\end{lemma}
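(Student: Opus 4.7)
The plan is to estimate $Q_y(X_y>r)=\check{P}(X_y>r,\tau_y<\infty)/Z_y$ via the renewal decomposition of Proposition~\ref{ingr}. First I would decompose the numerator by the value of $X_y$. On $\{X_y=x_0\}\cap\{\tau_y<\infty\}$ the walk reaches $x_0$ for the first time at $\tau_{x_0}$, visits each site of $[0,x_0-1]$ at least twice by that time, and stays strictly above $x_0$ afterwards until $\tau_y$. The initial piece uses the potential only at sites $\le x_0-1$, while the remainder uses it only at $[x_0,y]$, so the annealed weight factors as
\[
\check{P}(X_y>r,\tau_y<\infty) \;=\; \sum_{x_0=r+1}^{y} F(x_0)\,Z_{0,\,y-x_0},
\]
where $F(x_0)$ is the annealed weight of the initial piece and $Z_{0,\,y-x_0}$ is the partition function appearing in (\ref{decoz}) (after translation by $x_0$).

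Next, by Lemma~\ref{probexp} the kernel $q(r)=e^{\beta r}\Zb_{0,r}$ is an aperiodic probability distribution with exponential tails, so the decomposition used in the proof of Proposition~\ref{ingr} gives $Z_{0,m}=e^{-\beta m}u(m)$ with $u(m)\to 1/\mu\in(0,\infty)$ by the classical renewal theorem, where $\mu=\sum_r r\,q(r)<\infty$. In particular $Z_{0,m}\le C e^{-\beta m}$ for all $m$, and $Z_y\ge Z_{0,y}\ge c\,e^{-\beta y}$ for all sufficiently large $y$. Plugging these bounds in,
\[
Q_y(X_y>r) \;\le\; C'\sum_{x_0>r} f(x_0), \qquad f(x_0):=F(x_0)\,e^{\beta x_0},
\]
so the lemma reduces to the exponential tail bound $\sum_{x_0>r}f(x_0)\le C''e^{-cr}$.

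Proving this tail bound on $f$ is the main obstacle. Heuristically, the constraint defining $F(x_0)$—every site of $[0,x_0-1]$ visited at least twice, in particular the walk revisits $0$—is a ``doubled occupation'' constraint of the same flavour as in the definition of $\Zb_{0,x_0}$, and should therefore carry an extra exponential cost beyond $e^{-\beta x_0}$, paralleling the estimate $q(x_0)\le e^{-\epsilon x_0}$ of Lemma~\ref{probexp}. To implement this I would split the initial piece at the last visit $\sigma_0$ of the walk to $0$: the pre-$\sigma_0$ part consists of $M\ge 1$ excursions from $0$ (whose total annealed weight, summed over $M$, is an $x_0$-independent constant by independence from the environment on $[1,x_0-1]$), and the post-$\sigma_0$ part is a bridge from $0$ to $x_0$ staying strictly positive. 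The ``visited at least twice'' condition is then transferred (partially) to the post-segment, where a mass-gap estimate close in spirit to Lemma~\ref{probexp} controls it at rate $e^{-(\beta+\epsilon)x_0}$. The delicate point is that the visit condition may be fulfilled partly by pre-$\sigma_0$ excursions, so Lemma~\ref{probexp} cannot be used as a black box on the post-segment; a sharper exponential estimate is needed, and this is precisely what Theorem~\ref{mess} is designed to supply. Once the exponential tails of $f$ are established, summation yields $Q_y(X_y>r)\le Ce^{-cr}$ as required.
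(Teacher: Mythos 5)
Your reduction to the tail bound $\sum_{x_0>r}F(x_0)e^{\beta x_0}\le C''e^{-cr}$ is a valid, if circuitous, first step: the factorization of $\check{P}(X_y=x_0,\tau_y<\infty)$ is correct (on this event the path stays strictly above $x_0$ after $\tau_{x_0}$, so the pre- and post-$\tau_{x_0}$ pieces use disjoint sets of sites), and after noting that $q$ is aperiodic (it is, since $q(1)>0$), the renewal theorem gives $Z_{0,m}e^{\beta m}$ bounded above and $Z_ye^{\beta y}\ge Z_{0,y}e^{\beta y}$ bounded below. But the plan for the crucial tail bound on $F$ is where the argument breaks. The $\sigma_0$-split contains a concrete error: the pre-$\sigma_0$ excursions from $0$ must penetrate $(0,x_0)$ (otherwise they contribute nothing toward the required double visits there), so their annealed weight does depend on the environment on $[1,x_0-1]$; it is \emph{not} an $x_0$-independent constant. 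And the issue you yourself flag --- that the doubled-occupation constraint is split in an uncontrolled way between the early excursions and the final positive bridge --- is not resolved by invoking Theorem~\ref{mess}; it is precisely the hard part.

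Worse, the natural repair is circular. Writing $F(x_0)=Q_{x_0}(E_{x_0})Z_{x_0}$, where $E_{x_0}$ is the event that every site of $[0,x_0)$ is visited at least twice before $\tau_{x_0}$, one sees $E_{x_0}=\{X_{x_0}=x_0\}$, so bounding $Q_{x_0}(E_{x_0})$ is exactly an instance of the lemma being proved (with $y=x_0$, $r=x_0-1$). The paper avoids the entire detour by noting that $\{X_y>r\}$ is, by definition of a renewal point, contained in $B(0,r,y)$ (no renewal points in $(0,r)$ up to $\tau_y$), and then splitting on the environment event $K(0,r,\nu_1 r)$: Lemma~\ref{geomb} (coupling with a drifted walk) controls $Q_y(K(0,r,\nu_1 r)\cap B(0,r,y))$, while Lemma~\ref{expbd} (resting on Theorem~\ref{mess}) controls $Q_y(K^c(0,r,\nu_1 r))$. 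That direct argument is three lines, handles all $r$ at once, and needs no renewal-theoretic bookkeeping. Your proposal is therefore not a working alternative: its key step is both unproven as written and, when made precise, reduces back to the statement it is meant to establish.
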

\begin{proof}[Proof of (\ref{v}) in Theorem~\ref{1d}]
  By Lemma~\ref{taux} we only need to estimate the last term in
  (\ref{deco}). We fix an arbitrary $\epsilon>0$ and consider
  separately the expectation restricted to $\{X_y>\epsilon y\}$ and to
  its complement. By Lemmas~\ref{conw} and \ref{conX} we get
  \begin{multline*}
    E_{Q_y}\left[E_{Q_{0,y-X_y}}(\tau_{y-X_y}\,|\,X_y)\I_{\{X_y>\epsilon
        y\}}\right] \le M_2y^3Q_y(X_y>\epsilon y)\\\le CM_2y^3e^{-cy}\to
    0\ \text{ as } y\to \infty.
  \end{multline*}
  Consider now the expectation over $\{X_y \le \epsilon y\}$. We
  have
  \begin{equation}
    \label{b1}
    \frac{1}{y}\,
  E_{Q_y}\left[E_{Q_{0,y-X_y}}(\tau_{y-X_y}\,|\,X_y)\I_{\{X_y\le\epsilon
      y\}}\right] \le
  E_{Q_y}\left[\frac{E_{Q_{y-X_y}}(\tau_{y-X_y}\,|\,X_y)}{y-X_y}\,\I_{\{X_y\le
          \epsilon y\}}\right].
  \end{equation}
By Proposition~\ref{ingr}, for every
      $\epsilon_1>0$ there is $r_0$ such that for all $y\ge
      r_0/(1-\epsilon)$ and $x\le\epsilon
      y$
      \begin{equation}
        \label{cv}
        \left|\frac{Q_{0,y-x}(\tau_{y-x})}{y-x}-\frac{1}{v}\right| 
\le \epsilon_1.
      \end{equation}
      Thus, (\ref{b1}) is bounded above by $v^{-1}+\epsilon_1$ for all
      $y\ge r_0/(1-\epsilon)$.  On the other hand, by (\ref{cv}) and
      Lemma~\ref{conX} for all sufficiently large $y$
\begin{align*}
  \frac{1}{y}\,
  E_{Q_y}&\left[E_{Q_{0,y-X_y}}(\tau_{y-X_y}\,|\,X_y)\I_{\{X_y\le\epsilon
      y\}}\right] \ge \frac1y\left(\frac1v-\epsilon_1\right)E_{Q_y}\left
    ((y-X_y)\I_{\{X_y\le \epsilon
      y\}}\right)\\&\ge\left(\frac1v-\epsilon_1\right)(1-
  \epsilon)\,Q_y(X_y\le \epsilon y)\ge
  \left(\frac1v-\epsilon_1\right)(1-\epsilon)^2. 
\end{align*}
Since $\epsilon$ and $\epsilon_1$ were arbitrary, this finishes the proof.
\end{proof}

We close this section with a proof of (\ref{der}).  (See
\cite{Ze00} for a related result for random walks in random
environment.)

\begin{proof}[Proof of (\ref{der})]
  Lemma~\ref{au} implies
  that \[\frac{d\beta_{\lambda+V}(1)}{d\lambda}\Big|_{\lambda=0+}\le
  \frac1{v}.\]
It remains to show the converse inequality.  We fix $\epsilon > 0 .  $
It will be enough to show that for $\lambda $ positive and
sufficiently small
 \[
 \frac{\beta_{\lambda+V}(1) - \beta_{V}(1)}{ \lambda } \ \geq \
 \frac{1- \epsilon}{v}.
 \]
 By (\ref{be}) it is therefore sufficient to show that for such
 $\lambda$ fixed and all large $y$
 \[
 \frac{1}{\lambda y}\left(- \log E\left( e^{-\sum_{r=0}^{\tau_y -1}(V(S_r)+
       \lambda)}\right) + \log E\left(e^{-\sum_{r=0}^{\tau_y
         -1}V(S_r)} \right) \right) \ \geq \ \frac{1-
   \epsilon}{v}.
 \]
By Lemma~\ref{0}, this reduces to proving that
\begin{multline*}
  -\frac{1}{\lambda y}\, \log E_{Q_{0,y}}(e^{-\lambda
    \tau_y})=\frac{1}{\lambda y}\left(- \log
    E\left(e^{-\sum_{r=0}^{\tau_y -1}(V(S_r)+ \lambda)}
      \I_{\{\tau_0^{(2)} > \tau_y\}}\right)\right.\\\left.+ \log E\left(
      e^{-\sum_{r=0}^{\tau_y -1}V(S_r)}\I_{\{\tau_0^{(2)} > \tau_y\}}
    \right)\right) \geq \ \frac{1- \epsilon}{v}.
\end{multline*}
Thus, we need to show that for $\lambda$ small and all sufficiently
large $y$
\begin{equation}\label{need}
  E_{Q_{0,y}}(e^{-\lambda \tau_y})\le \exp\left(-\lambda y(1-\epsilon)/v\right).
\end{equation}
Conditioning on the number and locations of renewal points we get
\begin{equation}\label{rp}
  E_{Q_{0,y}}(e^{- \lambda
    \tau_y})=\sum_{k=1}^y\sum_{0<x_1<\dots<x_k=y}
    \prod_{j=1}^k q(x_j-x_{j-1})E_{\Qb_{0,x_j-x_{j-1}}} (e^{-
      \lambda \tau_{x_j-x_{j-1}}} ).
\end{equation}
The dominated convergence theorem implies that for each $r\in\IN$,
$\epsilon_1>0$, and all sufficiently small $\lambda$
 \[
 E_{\Qb_{0,r}} \left( \frac{1- e^{- \lambda \tau_r }}{ \lambda } \right) >
 (1- \epsilon_1) E_{\Qb_{0,r}}(\tau_r)
 \]
 and, thus,
 \begin{equation}
   \label{dc}
    E_{\Qb_{0,r}} ( e^{- \lambda \tau_r} ) < 1-\lambda (1- \epsilon_1)
 E_{\Qb_{0,r}}(\tau_r)\le  e^{-\lambda (1- \epsilon_1)
   E_{\Qb_{0,r}}(\tau_r) }.
 \end{equation}
 Next, we observe that for a renewal sequence based on the kernel
 $q(\cdot)$ and conditioned on having $y$ as a renewal point,
 $0<x_1<\dots<x_k=y$, there exists $M < \infty $ and $\epsilon_2 > 0$
 not depending on $y$, such that for all sufficiently large $y$ with
 probability at least $1- e^{-y \epsilon_2}$
 \begin{equation}\label{rm}
    \sum_{j=1}^k \I_{\{x_j - x_{j-1}\leq M\}}E_{\Qb_{0,x_j - x_{j-1}}}
    ( \tau_{x_j -x_{j-1}}) > \frac{y (1- \epsilon_1)}{v}.
\end{equation}
This statement follows from Lemma~\ref{probexp} and large deviations
bounds on i.i.d.\ random variables conditioned on an event of
probability bounded away from zero.

Now let us consider $\lambda >0$ sufficiently small and such that
(\ref{dc}) holds for each $r\in\{1,2,\dots, M\}$. Then (\ref{rp}) is
bounded above by
 \begin{multline*}
   \sum_{k=1}^y\sum_{0<x_1<\dots<x_k=y}
    \prod_{j=1}^k q(x_j-x_{j-1})
  e^{- \lambda (1-\epsilon_1)\I_{\{x_j-x_{j-1}\le M\}} E_{\Qb_{0,x_j-x_{j-1}}} (
    \tau_{x_j-x_{j-1}})}\\\le\sum_{k=1}^y\sum_{0<x_1<\dots<x_k=y} 
    \prod_{j=1}^k q(x_j-x_{j-1})e^{-y
  \lambda (1- \epsilon_1)^2/v}+e ^{- \epsilon_2 y}.
 \end{multline*}
We conclude that
\[ - \log \left( E(e^{-\sum_{r=0}^{\tau_y
       -1}(V(S_r)+ \lambda)} \I_{\{\tau_0 > \tau_y\}})\right) + \log \left(
   E(e^{-\sum_{r=0}^{\tau_y -1}V(S_r)}\I_{\{\tau_0 > \tau_y\}})\right)
 \]
\[
\geq -\log \left( e^{- \epsilon_2 y} + e^{-y\lambda (1- \epsilon_1 )^2/v} \right).
 \]
 This gives the desired inequality for $\lambda $ small, and we are done.
\end{proof} 

\section{The key environment estimate}\label{four}
A simple but important observation is that $Q_y$ and $Q_{0,y}$ can be
considered as measures not only on paths but on the product space of
paths and environments. Theorem~\ref{mess} below provides key estimates
on the environment under these measures. It is crucial for proofs of
technical results that we used in Section~\ref{three}.

Letters $a,b,x,z,x_i\ (i\in\IN\cup\{0\})$ will always denote
integers. Due to (\ref{d=1}) there exist $\kappa,K\in(0,\infty)$ such that
$\IP(V(0,\omega)\in[\kappa,K])>0$. Given an environment $\omega$, a
site $x\in\IZ$ will be called ``reasonable'' if
$V(x,\omega)\in[\kappa,K]$.

Let $I\subset[a,b]$ be an interval and $x_i\in I$, $i=1,2,\dots,m$,
$x_1<x_2<\dots<x_m$.  Define an ``environment event''
\begin{align*}
  \Omega_I(x_1,x_2,\dots,x_m)=
  \{\omega\in\Omega\,|\,V(x_i,\omega)&\in[\kappa,K]\ \forall
  i\in\{1,2,\dots,m\}\text{
    and }\\
  V(x,\omega)&\not\in[\kappa,K]\ \forall x\in
  I\setminus\{x_1,x_2,\dots,x_m\}\}.
\end{align*}
Observe that $\Omega_{(a,b)}$ is just the
event that $V(x,\omega)$ is not reasonable for every site 
$x\in(a,b)$.  We shall also need measures \[Q_{x,y}
(\cdot):=\check{P}^x(\cdot\,|\,\tau_x^{(2)}>\tau_y,
\tau_y<\infty),\quad 0\le x<y.\]

\begin{theorem} \label{mess} There exist constants $M_1,\ M_2$ and
  $\theta\in (0,1)$ not depending on $a,\ b,\ y$, or $x_i$,
  $i=1,2,\dots,m$, $0\le a=x_0<x_1<\dots<x_m<x_{m+1}=b\le y$, so that
  \begin{align}
    Q_{0,y}(\Omega_{(a,b)}(x_1, x_2,\cdots, x_m))\le \prod_{j=0} ^m
    (M_1 \theta ^{x_{j+1} -
      x_j}) \label{mess1};\\Q_y(\Omega_{(a,b)}(x_1, x_2,\cdots,
    x_m))\le \prod_{j=0} ^m (M_2 \theta ^{x_{j+1} -
      x_j}) \label{mess2}.
  \end{align}
\end{theorem}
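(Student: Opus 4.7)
The plan is to write
\[
Q_{0,y}\bigl(\Omega_{(a,b)}(x_1,\ldots,x_m)\bigr) \;=\; \frac{\IE\bigl[Z^\omega_{0,y}\,\I_{\Omega_{(a,b)}(x_1,\ldots,x_m)}\bigr]}{Z_{0,y}}
\]
and bound numerator and denominator separately. For the denominator, (\ref{be}) together with standard sub-multiplicativity gives $Z_{0,y}\ge c\,e^{-\beta y}$ where $\beta := \beta_V(1)$. For the numerator, I would invoke the strong Markov property of the killed walk at the successive first-passage times $\tau_{x_0}<\tau_{x_1}<\cdots<\tau_{x_{m+1}}$ (with $x_0 := a$ and $x_{m+1}:=b$). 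Under $\{\tau_y<\tau_0^{(2)}\}$ the walk is confined to $\{1,2,\ldots\}$ after time $0$ and must visit these points in left-to-right order, which yields a decomposition of $Z^\omega_{0,y}$ into an initial passage factor from $0$ to $a$, $m+1$ gap factors $\widetilde{Z}^\omega(x_j,x_{j+1}) := \check{P}^{\omega,x_j}(\tau_{x_{j+1}}<\tau_0,\,\tau_{x_{j+1}}<\infty)$, and a final passage factor from $b$ to $y$. The $V$-dependencies of neighbouring gap factors are not perfectly disjoint, since a walk avoiding $0$ may excurse below its starting point, but in one dimension this can be handled by a restricted-walk comparison that contributes only a uniform multiplicative constant.

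The heart of the argument is then a \emph{gap estimate} of the form
\[
\IE\bigl[\widetilde{Z}^\omega(u,v)\,\I_{\Omega_{(u,v)}(\emptyset)}\bigr]\;\le\; M'\,e^{-(\beta+\delta)(v-u)}
\]
for all $u<v$, with constants $M',\delta>0$ not depending on the pair. Setting $\theta := e^{-\delta}\in(0,1)$ and cancelling the baseline $e^{-\beta(v-u)}$ against the matching portion of the $Z_{0,y}$ lower bound yields the per-gap decay $\theta^{v-u}$. The mechanism is that forcing $V\notin[\kappa,K]$ on each interior site of $(u,v)$ \emph{strictly} inflates the annealed Lyapunov rate above $\beta$: by (\ref{d=1}) the unrestricted rate is attained on distributions that charge $[\kappa,K]$, and removing that atom yields a strictly worse rate. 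One way to make this quantitative is to partition the interior sites into those with $V<\kappa$ (contributing a probability factor $\IP(V<\kappa)<1$ per site) and those with $V>K$ (contributing a partition-function factor $e^{-K}$ per site); either way, the per-site contribution is strictly below the optimum, and the loss compounds exponentially in $v-u$.

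To assemble, each reasonable site $x_i$ contributes a uniformly bounded factor—the density $\IP(V(x_i)\in[\kappa,K])$ times the bounded survival $e^{-V(x_i)\ell_y(x_i)}\le 1$—collected into a constant $M_1$. The $m+1$ gap estimates multiply to $\prod_{j=0}^m\theta^{x_{j+1}-x_j}=\theta^{b-a}$, proving (\ref{mess1}). For (\ref{mess2}), I would use $Z^\omega_y=Z^\omega_{0,y}/(1-r^\omega)$ with $r^\omega:=\check{P}^{\omega}(\tau_0^{(2)}<\tau_y\wedge\infty)$ to decompose the walk under $Q_y$ into (geometrically many) excursions from $0$ returning to $0$, followed by a successful $0\to y$ excursion; since $\Omega_{(a,b)}(x_1,\ldots,x_m)$ depends only on $V$ in a fixed region, the annealed weight of the return excursions separates out and (\ref{mess1}) transfers to (\ref{mess2}) with a possibly enlarged constant. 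The main obstacle is proving the gap estimate with the correct strict inequality between rates: a naive worst-case bound on $\widetilde{Z}^\omega$ fails because environments with $V<\kappa$ throughout $(u,v)$ actually enhance the partition function via improved survival, so obtaining the required $\delta>0$ demands a genuinely quantitative comparison between restricted and unrestricted annealed rates, and it is here that the positive-probability atom $[\kappa,K]$ guaranteed by (\ref{d=1}) enters in an essential way.
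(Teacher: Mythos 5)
Your plan correctly identifies the skeleton — reduce to a single-gap estimate by factoring $Z^\omega_{0,y}$ across the prescribed reasonable sites, then bound the per-gap contribution by $M\,\theta^{v-u}$ — and you have put your finger on exactly the right obstacle: the gap estimate
\[
\IE\bigl[\widetilde{Z}^\omega(u,v)\,\I_{\Omega_{(u,v)}}\bigr]\;\le\; M'\,e^{-(\beta+\delta)(v-u)}
\]
with $\delta>0$ strict. But this estimate \emph{is} the entire content of the theorem (it is equivalent in substance to Theorem~\ref{env} in the paper), and you do not prove it — you even note explicitly that the naive worst-case argument fails because environments with $V<\kappa$ throughout $(u,v)$ can \emph{improve} survival, and that what is missing is ``a genuinely quantitative comparison between restricted and unrestricted annealed rates.'' That is correct, and it means the proposal stops short at the decisive step. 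The suggested mechanism (per-site factors $\IP(V<\kappa)<1$ or $e^{-K}$ depending on whether $V<\kappa$ or $V>K$) does not close it: on the $\{V<\kappa\}$ branch the per-site survival factor is larger, not smaller, and no probability factor controls by how much the partition function gets inflated.

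The paper obtains the exponential bound by a ratio/renewal argument that sidesteps a direct rate comparison. Lemma~\ref{below1} gives a \emph{lower} bound on the ratio $Q_b(\Omega_{(a,b)}(x_1,\ldots,x_{m-1}))/Q_b(\Omega_{(a,b)})$ of the form $c^{m}\prod_j(x_j-x_{j-1})^{-1}$, by inserting reasonable sites path-by-path and environment-by-environment — this is precisely where the positive mass of $[\kappa,K]$ enters. Summing over all configurations of reasonable sites in $(a,b)$, whose probabilities under $Q_b$ add to $1$, one recognizes the resulting series as a renewal series; the renewal theorem (Lemma~\ref{theta}) then delivers $Q_b(\Omega_{(a,b)})\le C\,\theta^{b-a}$ with $\theta$ determined by where that series has radius of convergence one. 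This extracts the rate shift indirectly from the normalization $\sum_S Q_b(\Omega_{(a,b)}(S))=1$ rather than from a head-on comparison of restricted and unrestricted Lyapunov rates.

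A secondary gap: your handling of the overlap between consecutive gap factors (``a restricted-walk comparison that contributes only a uniform multiplicative constant'') is not substantiated. After first passage to $x_j$ the walk may excurse far to the left of $x_j$, so the gap factors share large portions of the environment and first-passage factorization alone gives no independence. The paper's decoupling (Lemma~\ref{ind}) instead conditions on the \emph{number of visits} to the reasonable site $x_j$: each return through $x_j$ costs a factor $e^{-\kappa}$, and summing the geometric series gives the uniform constant $(1-e^{-\kappa})^{-1}$ together with genuine product structure across the left and right of $x_j$.
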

This theorem is a consequence of Theorem~\ref{env} and Lemma~\ref{ind} below.
\begin{theorem}\label{env}
  There exist constants $M_3,\ M_4\ge 1$ and $\theta\in (0,1)$ not
  depending on $x,\ a,\ b$, and $y$, $0\le x\le a<b\le y$, such that
  \begin{align}
  Q_{x,y}(\Omega_{(a,b)})&\le M_3\theta^{b-a};\label{env1}\\
  Q_y(\Omega_{(a,b)})&\le M_4\theta^{b-a}.\label{env2}
  \end{align}
\end{theorem}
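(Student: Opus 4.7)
The plan is to prove~(\ref{env2}); the bound~(\ref{env1}) will follow by the same scheme with $\check{P}^{\omega,0}(\tau_y<\infty)$ replaced by $\check{P}^{\omega,x}(\tau_x^{(2)}>\tau_y,\tau_y<\infty)$, the extra conditioning only tightening the local-time control used below. Starting from
\[
Q_y(\Omega_{(a,b)})=\frac{\IE\bigl[\check{P}^{\omega,0}(\tau_y<\infty)\I_{\Omega_{(a,b)}}\bigr]}{Z_y^0},
\]
I would exploit the independence of $V|_{(a,b)}$ and $V|_{(a,b)^c}$ together with Fubini to integrate out the potential on $(a,b)$ site by site. Writing $\ell(z)=\sum_{n<\tau_y}\I_{\{S_n=z\}}$, the numerator becomes
\[
\IE\!\left[E^0\!\left[\I_{\{\tau_y<\infty\}}\,e^{-\sum_{z\notin(a,b)}V(z)\ell(z)}\prod_{z\in(a,b)}\phi(\ell(z))\right]\right],
\]
with $\phi(n):=\IE[e^{-V(0)n}\I_{\{V(0)\notin[\kappa,K]\}}]$; the denominator $Z_y^0$ has the analogous expression with $\phi$ replaced by $\psi(n):=\IE[e^{-V(0)n}]$.

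Let $p:=\IP(V(0)\in[\kappa,K])>0$. The pointwise ratio satisfies $\phi(n)/\psi(n)\le 1-pe^{-Kn}$, so for any fixed threshold $L_0$ one has $\phi(n)/\psi(n)\le\theta_0:=1-pe^{-KL_0}<1$ whenever $n\le L_0$, while $\phi(n)\le\psi(n)$ always. Setting $G:=\{z\in(a,b):\ell(z)\le L_0\}$, pathwise
\[
\prod_{z\in(a,b)}\phi(\ell(z))\le\theta_0^{|G|}\prod_{z\in(a,b)}\psi(\ell(z)).
\]
Substituting this and dividing by $Z_y^0$ reduces the entire claim to a concentration bound: for $L_0$ large and some $\alpha\in(0,1)$ chosen appropriately, I need $|G|\ge(1-\alpha)(b-a-1)$ with $Q_y$-probability at least $1-e^{-c(b-a)}$.

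The main obstacle will be exactly this concentration estimate, because naive moment bounds on $\ell(z)$ under $Q_y$ are circular---they are essentially equivalent to the hitting-time control that Theorem~\ref{env} is used to establish elsewhere in the paper. My plan to break the circularity is a bootstrap exploiting the one-dimensional geometry: between any two reasonable sites the killed walk behaves as a random walk on a finite interval with geometric excursion tails, so sites of atypically large local time are rare under $Q_y$. I would combine this excursion-based control with a union bound over configurations of $G^c$ of prescribed cardinality to obtain the required exponential tail. Feeding the concentration back into the pathwise estimate then yields~(\ref{env2}) with $\theta=\theta_0^{1-\alpha}\in(0,1)$ and a suitable constant $M_4$, and the proof of~(\ref{env1}) is structurally identical after replacing $Z_y^0$ by $\check{P}^x(\tau_x^{(2)}>\tau_y,\tau_y<\infty)$.
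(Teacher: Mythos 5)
Your reduction is clean and the algebra is correct: integrating out $V$ on $(a,b)$ by independence and Fubini, and comparing the site factors $\phi(n)=\IE[e^{-nV}\I_{\{V\notin[\kappa,K]\}}]$ to $\psi(n)=\IE[e^{-nV}]$ pathwise via $\phi(n)\le\theta_0\psi(n)$ for $n\le L_0$, does give
\[
Q_y(\Omega_{(a,b)})\ \le\ E_{Q_y}\bigl[\theta_0^{|G|}\bigr],\qquad G=\{z\in(a,b):\ell_y(z)\le L_0\}.
\]
But this only trades the theorem for the concentration bound $Q_y\bigl(|G|<(1-\alpha)(b-a)\bigr)\le e^{-c(b-a)}$, and the bootstrap you sketch to obtain it is circular. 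The excursion control ``between any two reasonable sites'' presupposes that reasonable sites have positive density in $(a,b)$ under $Q_y$ --- but that is precisely Lemma~\ref{expbd}, which is derived from Theorem~\ref{mess}, which is derived from the theorem you are proving. Nor does stochastic domination by the simple random walk (Lemma~\ref{bias}) help: for $z\in(a,b)$ with $y-z$ of order $y$, the SRW local time before $\tau_y$ is geometric with mean $2(y-z)$, so $P^0(\ell_y(z)\le L_0)$ is of order $L_0/y$, not of order one. Finally, the annealed reweighting penalizes re-visits to a site only by a factor bounded below by $\IP(V=0)$ (when this is positive), so the per-visit penalty saturates, and there is no elementary way to force $\ell_y(z)\le L_0$ for most $z$ without the tools this theorem is meant to provide.

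The paper's proof avoids path control entirely. Lemma~\ref{below1} compares the two environment events directly: it bounds $Q_b(\Omega_{(a,b)}(x_1,\dots,x_{m-1}))/Q_b(\Omega_{(a,b)})$ from below by restricting the walk to visit each $x_i$ exactly once on its way to $b$ (for the numerator) and decomposing at last visits (for the denominator), so that the environment- and path-dependent factors cancel in the ratio, leaving the explicit combinatorial lower bound $\frac{1}{2^m}\bigl(\frac{e^{-K}\delta}{1-\delta}\bigr)^{m-1}\prod_j\frac{1}{x_j-x_{j-1}}$. Since these ratios, summed over $m$ and over all configurations of reasonable sites, total at most $Q_b(\Omega_{(a,b)})^{-1}$, Corollary~\ref{above1} turns this into an upper bound on $Q_b(\Omega_{(a,b)})$ by the reciprocal of a combinatorial sum, which Lemma~\ref{theta} identifies as a renewal quantity and shows grows like $\theta^{-(b-a)}$ by the renewal theorem. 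The extension from $b=y$ to general $b\le y$ then uses the independence splitting of Lemma~\ref{ind} at the first reasonable site $\ge b$. In short: the paper's argument is an exact environment-comparison plus renewal scheme; your argument would need a priori local-time concentration under $Q_y$, which is exactly what is unavailable before this theorem is established.
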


\begin{lemma}
\label{ind}
Let $0\le x<y$ and \[\cal{E}_{x-}\in \sigma(\{V(z,\omega)\},\
z<x),\ \ \cal{E}_{x+}\in \sigma(\{V(z,\omega)\},\ z>x).\] Then
\begin{align}
  Q_{0,y}(\cal{E}_{x-}\cap\{V(x)\in[\kappa,K]\}\cap\cal{E}_{x+}) &\le\frac{1} {1-e^{-\kappa}}\,Q_{0,x}(\cal{E}_{x-})Q_{x,y}(\cal{E}_{x+});\label{ind1}\\
  Q_y(\cal{E}_{x-}\cap\{V(x)\in[\kappa,K]\}\cap\cal{E}_{x+})
  &\le\frac{1}
  {1-e^{-\kappa}}\,Q_x(\cal{E}_{x-})Q_{x,y}(\cal{E}_{x+}).\label{ind2}
\end{align}
\end{lemma}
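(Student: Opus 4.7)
The plan is a first-passage decomposition at the intermediate site $x$, which in one dimension is essentially explicit. Assume $0<x<y$ (the case $x=0$ requires only notational changes). Every nearest-neighbor path from $0$ to $y$ visits $x$ at least once; enumerating such paths by the number $m\ge 1$ of visits to $x$ and splitting into an initial arc from $0$ to the first hit of $x$, $m-1$ excursions back to $x$, and a terminal arc from $x$ to $y$, I would obtain the identity
\[
  Z^\omega_{0,y} \;=\; \frac{Z^\omega_{0,x}\cdot Z^\omega_{x,y}}{1 - e^{-V(x)}(B_L + B_R)},
\]
where $B_L,B_R$ are the weights of a single left (right) excursion from $x$ which stays in $\{1,\dots,x-1\}$ (resp.\ $\{x+1,\dots,y-1\}$) before returning. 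Each is at most $1/2$ (the probability of the corresponding first step), so $B_L+B_R\le 1$, and hence on $\{V(x)\in[\kappa,K]\}$ the denominator is at least $1-e^{-\kappa}$, giving
\[
  Z^\omega_{0,y} \;\le\; \frac{1}{1-e^{-\kappa}}\,Z^\omega_{0,x}\,Z^\omega_{x,y}.
\]
The identical decomposition with the ``no return to $0$'' constraint dropped on the left excursions yields the analogous pointwise bound $Z^\omega_y \le (1-e^{-\kappa})^{-1}\,Z^\omega_x\,Z^\omega_{x,y}$.

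The key structural observation is that $Z^\omega_{0,x}$ depends only on $V(z)$ for $z<x$, while $Z^\omega_{x,y}$ depends only on $V(z)$ for $z\ge x$ (this uses one-dimensionality in an essential way: the terminal arc is confined to $\{x+1,\dots,y\}$). Multiplying the pointwise inequality by $\I_{\cal{E}_{x-}}\I_{\{V(x)\in[\kappa,K]\}}\I_{\cal{E}_{x+}}$ and taking $\IP$-expectation, the resulting integrand factors across the two half-lines by independence of $V$. Dropping $\I_{\{V(x)\in[\kappa,K]\}}$ (which only weakens the upper bound) and dividing by $Z_{0,y}$ yields
\[
  Q_{0,y}\bigl(\cal{E}_{x-}\cap\{V(x)\in[\kappa,K]\}\cap\cal{E}_{x+}\bigr)
  \;\le\;
  \frac{1}{1-e^{-\kappa}}\cdot\frac{Z_{0,x}\,Z_{x,y}}{Z_{0,y}}\,
  Q_{0,x}(\cal{E}_{x-})\,Q_{x,y}(\cal{E}_{x+}).
\]

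To close, the same decomposition gives the pointwise lower bound $Z^\omega_{0,y}\ge Z^\omega_{0,x}\,Z^\omega_{x,y}$ (since $e^{-V(x)}(B_L+B_R)\ge 0$), and averaging with the same half-line independence gives $Z_{0,y}\ge Z_{0,x}\,Z_{x,y}$, so the prefactor $Z_{0,x}Z_{x,y}/Z_{0,y}\le 1$ may be dropped, proving (\ref{ind1}). The argument for (\ref{ind2}) is identical word-for-word with $(Z_y, Z_x)$ in place of $(Z_{0,y}, Z_{0,x})$ and with the unrestricted excursion weights. The only real obstacle is the bookkeeping in the excursion decomposition — verifying that each piece lives in the correct half-lattice and that $V(x)$ is counted with the correct multiplicity — which is where one-dimensionality enters essentially; once that is set up, the elementary estimate $B_L+B_R\le 1$ and a single geometric sum finish the job.
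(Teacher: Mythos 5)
Your argument is correct and, at its core, uses the same idea as the paper's proof: decompose over the number $m$ of visits to $x$, gain a factor $e^{-\kappa}$ from $V(x)\ge\kappa$ at each of the $m-1$ returns, sum the resulting geometric series to produce $(1-e^{-\kappa})^{-1}$, and invoke independence of $\{V(z):z<x\}$, $V(x)$, and $\{V(z):z>x\}$ together with the one-dimensional fact that the initial and terminal arcs live in the left and right half-lines respectively. Your packaging via the exact renewal identity $Z^\omega_{0,y}=Z^\omega_{0,x}Z^\omega_{x,y}/(1-e^{-V(x)}(B_L+B_R))$ is a slightly more explicit way of organizing what the paper carries out term by term, and it also delivers the needed lower bound $Z_{0,y}\ge Z_{0,x}Z_{x,y}$, which the paper obtains as the $m=1$ term of the same decomposition.
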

Lemma~\ref{ind} follows easily from the decomposition of the path
space according to the number of visits to a reasonable site $x$. The
details are given in the Appendix. The proof of Theorem~\ref{env} is
the main content of this section. For now we assume both statements and
show how they imply Theorem~\ref{mess}.
\begin{proof}[Proof of Theorem~\ref{mess}]
The proofs of (\ref{mess1}) and (\ref{mess2}) are identical, and we show
only (\ref{mess2}).
\begin{align*}
  Q_y(\Omega_{(a,b)}&(x_1,x_2,\dots,x_m))\\&=Q_y
  (\Omega_{(a,x_m)}(x_1,\dots,x_{m-1})
  \cap \{V(x_m)\in[\kappa,K]\}\cap \Omega_{(x_m,b)})\\
  &\overset{(\ref{ind2})}{\le}
  \frac{1}{1-e^{-\kappa}}\,Q_{x_m}(\Omega_{(a,x_m)}(x_1,\dots,x_{m-1})) 
  Q_{(x_m,y)}(\Omega_{(x_m,b)})\\
  &\overset{(\ref{ind1})}{\le} \left(\frac{1}{1-e^{-\kappa}}\right)^m
  Q_{x_1}(\Omega_{(a,x_1)})\prod_{j=1}^m Q_{x_j,x_{j+1}}
  (\Omega_{(x_j,x_{j+1})})\\&\overset{(\ref{env2})}{\le}
  \prod_{j=0}^m\left(\frac{M_3}{1-e^{-\kappa}}\,\theta^{x_{j+1}-x_j}\right).\qedhere
\end{align*}
\end{proof}
We turn now to the proof of Theorem~\ref{env}. It is a consequence of
several simple lemmas. We derive only (\ref{env2}), the proof of
(\ref{env1}) being practically the same.
\begin{lemma}
  \label{below1}
  Let $0\le a=x_0<x_1<\dots<x_m=b\le y$ and
  $\delta=\IP(V(0)\in[\kappa,K])\in(0,1)$. Then
  \begin{equation}
    \label{bel1}
    \frac{Q_b(\Omega_{(a,b)}(x_1,x_2,\dots,x_{m-1}))}{Q_b(\Omega_{(a,b)})}\ge 
    \frac{1}{2^m}\left(\frac{e^{-K}\delta}{1-\delta}\right)^{m-1}\prod_{j=1}^m
    \frac{1}{x_j-x_{j-1}}.
\end{equation}
\end{lemma}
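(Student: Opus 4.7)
The plan is to cancel $Z_b$ from numerator and denominator, factor out the annealed environment integration by singling out the $m-1$ potentials at $x_1,\dots,x_{m-1}$, and then lower-bound the resulting ratio of conditional expectations pointwise in the rest of the environment using the strong Markov property and a Gambler's ruin comparison.

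First, I would write
\[
\frac{Q_b(\Omega_{(a,b)}(x_1,\dots,x_{m-1}))}{Q_b(\Omega_{(a,b)})}=\frac{\IE(\I_{\Omega_{(a,b)}(x_1,\dots,x_{m-1})}Z^{\omega}_b)}{\IE(\I_{\Omega_{(a,b)}}Z^{\omega}_b)}
\]
and split the environment as $\omega=(\tilde\omega,\{V(x_i)\}_{i=1}^{m-1})$. The condition on $\tilde\omega$ (that $V$ be outside $[\kappa,K]$ on $(a,b)\setminus\{x_1,\dots,x_{m-1}\}$) is identical in numerator and denominator, so only the distributions of $V(x_i)$ differ: reasonable (total mass $\delta^{m-1}$) versus unreasonable (total mass $(1-\delta)^{m-1}$). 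This produces the prefactor $(\delta/(1-\delta))^{m-1}$ and reduces the task to lower bounding, pointwise in admissible $\tilde\omega$, the ratio
\[
\frac{\IE_{V(x_i)\mid V(x_i)\in[\kappa,K]}(Z^{\omega}_b)}{\IE_{V(x_i)\mid V(x_i)\notin[\kappa,K]}(Z^{\omega}_b)}.
\]

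To bound the last quantity, I would expand $Z^{\omega}_b=E^0(\I_{\tau_b<\infty}e^{-\sum_z L(z)V(z)})$ and interchange the $V(x_i)$-expectations with the path expectation. Using $V(x_i)\le K$ on $\{V(x_i)\in[\kappa,K]\}$ gives $\IE(e^{-L(x_i)V(x_i)}\mid V(x_i)\in[\kappa,K])\ge e^{-KL(x_i)}$, while $V(x_i)\ge 0$ forces $\IE(e^{-L(x_i)V(x_i)}\mid V(x_i)\notin[\kappa,K])\le 1$. The ratio is then dominated below by the path-space expectation
\[
E_{Q^{\tilde\omega,0}_b}\!\bigl(e^{-K\sum_{i=1}^{m-1}L(x_i)}\bigr),
\]
where $Q^{\tilde\omega,0}_b$ denotes the path measure in the environment obtained by zeroing $V$ at the $x_i$. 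Restricting this expectation to $\mathcal B=\{L(x_i)=1\text{ for all }i=1,\dots,m-1\}$ makes the integrand equal to $e^{-K(m-1)}$, and leaves the task of lower bounding $Q^{\tilde\omega,0}_b(\mathcal B)$ by $1/(2^m\prod_j(x_j-x_{j-1}))$.

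Since any path from $0$ to $b$ must cross $a<x_1<\dots<x_{m-1}<b$ in order, repeated use of the strong Markov property at the successive hitting times $\tau_{x_j}$ factorises both $\check P^\omega_0(\tau_b<\infty)$ and its $\mathcal B$-restriction, so that for any $\omega$,
\[
Q^{\omega}_b(\mathcal B)\ge\prod_{j=1}^m\frac{F^{\omega}_{x_{j-1}\to x_j}}{\check P^{\omega}_{x_{j-1}}(\tau_{x_j}<\infty)},\qquad F^{\omega}_{x\to y}:=\check P^{\omega}_x(\tau_y<\tau_x^{(2)},\tau_y<\infty).
\]
Applying Markov once more at the first return to $x_{j-1}$ identifies each factor with $1-\rho_j$, $\rho_j$ being the killed probability of returning to $x_{j-1}$ before reaching $x_j$. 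Since $e^{-V}\le 1$, $\rho_j$ is dominated by its unkilled analogue, which by elementary Gambler's ruin equals $1-1/(2(x_j-x_{j-1}))$. Hence $1-\rho_j\ge 1/(2(x_j-x_{j-1}))$, and multiplying over $j$ together with the earlier reductions gives the claimed bound. The main obstacle is the careful separation of the annealed product structure from the path-space computation so that the $(\delta/(1-\delta))^{m-1}$ prefactor decouples cleanly from the remaining $\tilde\omega$-dependent ratio; once this is in place, the transfer of Gambler's ruin to the killed walk is immediate from the monotonicity of return probabilities under killing.
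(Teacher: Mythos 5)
Your proof is correct and follows essentially the same strategy as the paper: fix the environment outside $\{x_1,\dots,x_{m-1}\}$, extract the $(\delta e^{-K}/(1-\delta))^{m-1}$ prefactor from the potential at the $x_i$, restrict the path integral to trajectories that visit each $x_i$ exactly once, and convert the resulting escape probabilities via Gambler's ruin. The one genuine difference lies in how the cancellation between numerator and denominator is organized. The paper upper-bounds the denominator through an explicit last-passage decomposition (the stopping times $\sigma_i,\rho_i$ and the identity of Lemma~\ref{a2}), producing the same conditional expectations $E^{x_{i-1}}(\,\cdot\,|\,\tau_{x_{i-1}}^{(2)}>\tau_{x_i})$ that arise in the lower bound of the numerator, so that these cancel term by term. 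You instead reduce the whole ratio, pointwise in the fixed $\tilde\omega$, to the single quenched quantity $Q_b^{\tilde\omega,0}(\mathcal B)$, and then handle that directly by a strong-Markov factorisation together with monotonicity of killed return probabilities. This is a slightly cleaner route, as it sidesteps the auxiliary last-passage lemma entirely; it produces the same constant. One small point worth recording: the event $\mathcal B=\{L(x_i)=1,\ i=1,\dots,m-1\}$ does not force $\tau^{(2)}_a>\tau_{x_1}$, so it is strictly larger than the event $\bigcap_{j=1}^m\{\tau^{(2)}_{x_{j-1}}>\tau_{x_j}\}$ (shifted to time $\tau_a$) that actually factorises under the strong Markov property. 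You correctly need only the inclusion of the latter in the former, and your displayed inequality reflects this, but the phrasing that the strong Markov property ``factorises the $\mathcal B$-restriction'' should be read as a lower bound through that sub-event rather than as an exact factorisation of $\mathcal B$ itself.
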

We postpone the proof of this lemma to record its immediate corollary.
\begin{corollary}
\label{above1}
Set
\begin{equation*}
  C(a,b,\delta,K):=e^K\,\frac{1-\delta}{\delta}\sum_{m=1}^\infty  
  \sum_{a=x_0<x_1<\dots<x_m=b}\prod_{j=1}^m 
  \left(\frac{e^{-K}\delta}{2(1-\delta)}\right)
  \frac{1}{x_j-x_{j-1}}.
\end{equation*}
Then $Q_b(\Omega_{(a,b)})\le(C(a,b,\delta,K))^{-1}$.
\end{corollary}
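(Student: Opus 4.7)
The plan is to apply Lemma~\ref{below1} simultaneously to every admissible configuration of reasonable sites in $(a,b)$ and use that these configurations exhaust all possible environments. The key observation is that, as $(x_1,\dots,x_{m-1})$ ranges over strictly increasing integer sequences in $(a,b)$ and $m\ge 1$ varies, the events $\Omega_{(a,b)}(x_1,\dots,x_{m-1})$ (with the convention $x_0=a,\ x_m=b$, so $m=1$ corresponds to the empty configuration and the event $\Omega_{(a,b)}$ itself) form a \emph{partition} of $\Omega$ according to which integer sites in $(a,b)$ are reasonable. In particular,
\[
\sum_{m=1}^\infty \;\sum_{a=x_0<x_1<\dots<x_m=b}
 Q_b\bigl(\Omega_{(a,b)}(x_1,\dots,x_{m-1})\bigr)\;\le\;1.
\]

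Next I would apply Lemma~\ref{below1} term by term to bound each summand from below by
\[
Q_b(\Omega_{(a,b)})\cdot\frac{1}{2^m}\left(\frac{e^{-K}\delta}{1-\delta}\right)^{m-1}\prod_{j=1}^m\frac{1}{x_j-x_{j-1}},
\]
and then factor $Q_b(\Omega_{(a,b)})$ outside the double sum. This gives
\[
1\;\ge\;Q_b(\Omega_{(a,b)})\sum_{m=1}^\infty\sum_{a=x_0<\dots<x_m=b}\frac{1}{2^m}\left(\frac{e^{-K}\delta}{1-\delta}\right)^{m-1}\prod_{j=1}^m\frac{1}{x_j-x_{j-1}}.
\]

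It then remains to identify the double sum on the right with $C(a,b,\delta,K)$. Using the elementary rewriting
\[
\frac{1}{2^m}\left(\frac{e^{-K}\delta}{1-\delta}\right)^{m-1}
=\;e^K\,\frac{1-\delta}{\delta}\,\left(\frac{e^{-K}\delta}{2(1-\delta)}\right)^{m},
\]
we recognize the sum as exactly $C(a,b,\delta,K)$, and the corollary follows by dividing through. The only points requiring care are verifying the partition claim in the boundary case $m=1$ (where the event is $\Omega_{(a,b)}$ and the product reduces to $1/(b-a)$, giving the trivially true inequality $1\ge 1/(2(b-a))$), and tracking the algebraic identification of constants; neither is a genuine obstacle, since all the probabilistic content has already been encapsulated in Lemma~\ref{below1}.
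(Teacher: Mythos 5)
Your proof is correct and is precisely the argument the authors left implicit by calling it an "immediate" corollary of Lemma~\ref{below1}: the events $\Omega_{(a,b)}(x_1,\dots,x_{m-1})$ partition the sample space (so the $Q_b$-probabilities sum to $1$), Lemma~\ref{below1} bounds each term below by $Q_b(\Omega_{(a,b)})$ times the corresponding product, and the algebraic identity $\frac{1}{2^m}\bigl(\frac{e^{-K}\delta}{1-\delta}\bigr)^{m-1}=e^K\frac{1-\delta}{\delta}\bigl(\frac{e^{-K}\delta}{2(1-\delta)}\bigr)^m$ identifies the resulting sum with $C(a,b,\delta,K)$.
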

The next lemma from renewal theory shows that the above inequality
actually gives an exponential bound on $Q_b(\Omega_{(a,b)})$.
\begin{lemma}
    \label{theta}
    Choose $\theta\in(0,1)$ so that
    \[\frac{e^{-K}\delta}{2(1-\delta)}\sum_{k=1}^\infty\frac{\theta^k}{k}=1.\] 
    Then there is a constant $c>0$ such that for all $a \le b, \
    C(a,b,\delta,K)\ge c\,\theta^{-(b-a)}$.
  \end{lemma}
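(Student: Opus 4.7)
The plan is to recognize $C(a,b,\delta,K)$ (up to a multiplicative constant) as the generating-function expression for the renewal measure of an explicit aperiodic renewal process, after rescaling by $\theta$.

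Write $n:=b-a$ and $C_0:=\tfrac{e^{-K}\delta}{2(1-\delta)}$. Substituting $y_j=x_j-x_{j-1}\ge 1$ turns the inner sum in the definition of $C(a,b,\delta,K)$ into a sum over compositions of $n$, which is the coefficient of $z^n$ in $f(z)^m$, where
\[
f(z)\;:=\;C_0\sum_{k=1}^\infty \frac{z^k}{k}\;=\;-C_0\log(1-z),\qquad |z|<1.
\]
Summing over $m\ge 1$ yields
\[
\sum_{n=1}^\infty C(a,a+n,\delta,K)\,z^n \;=\; \frac{e^K(1-\delta)}{\delta}\cdot\frac{f(z)}{1-f(z)}.
\]
The defining equation for $\theta$ is precisely $f(\theta)=1$, so $\theta$ is the radius of convergence-like threshold at which $1-f$ vanishes. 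Setting $g(w):=f(\theta w)$, the coefficients $p_k:=C_0\theta^k/k$ are non-negative and satisfy $g(1)=f(\theta)=1$, so $g$ is the probability generating function of a distribution on $\IN$. This distribution is aperiodic (since $p_1,p_2>0$) and has finite mean $\mu:=g'(1)=C_0\theta/(1-\theta)\in(0,\infty)$.

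Substituting $z=\theta w$ gives
\[
\sum_{n=1}^\infty C(a,a+n,\delta,K)\,\theta^n w^n\;=\;\frac{e^K(1-\delta)}{\delta}\cdot\frac{g(w)}{1-g(w)}\;=\;\frac{e^K(1-\delta)}{\delta}\Big(\sum_{n\ge 0}u_n w^n - 1\Big),
\]
where $(u_n)_{n\ge 0}$ is the renewal measure associated with $(p_k)$. By Blackwell's renewal theorem, $u_n\to 1/\mu>0$ as $n\to\infty$, so $C(a,a+n,\delta,K)\,\theta^n$ converges to the strictly positive limit $e^K(1-\delta)/(\delta\mu)$.

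To conclude it suffices to note that $C(a,a+n,\delta,K)$ is strictly positive for every $n\ge 1$ (just retain the $m=1$ term, which contributes $\tfrac{e^K(1-\delta)}{\delta}\cdot C_0/n>0$), so the sequence $C(a,a+n,\delta,K)\,\theta^n$ is bounded below uniformly in $n\ge 0$ by some $c>0$, proving $C(a,b,\delta,K)\ge c\,\theta^{-(b-a)}$. The only mildly delicate point is verifying the hypotheses of Blackwell's theorem for $(p_k)$ (non-negativity, total mass one, aperiodicity, finite mean), and all four are immediate from the explicit form of $p_k$; no further estimate is needed.
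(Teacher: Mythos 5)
Your proposal is correct and follows essentially the same route as the paper: both recognize $\theta$ as the normalization making $p_k=\tfrac{e^{-K}\delta}{2(1-\delta)}\tfrac{\theta^k}{k}$ a probability distribution, rewrite $C(a,a+n)\theta^n$ (up to the constant $e^K(1-\delta)/\delta$) as the renewal mass $u_n$ for that distribution, and invoke the discrete (Erd\H{o}s--Feller--Pollard) renewal theorem to get $u_n\to\mu^{-1}>0$; the paper does this directly by writing $e^{\beta y}$ as a product inside the sum, while you package the same identity via generating functions and the substitution $z=\theta w$. (The only cosmetic slip is the claim of a lower bound ``uniformly in $n\ge 0$'': for $n=0$ the sum defining $C$ is empty, so the bound holds only for $n\ge1$ --- but that is all the lemma is used for.)
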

  \begin{proof}
    Let $(\xi_n)_{n\ge 1}$ be random variables such that
    \[f_k=P(\xi_1=k)=\frac{e^{-K}\delta}{2(1-\delta)}
    \frac{\theta^k}{k},\quad k\in\IN,\] and define the renewal times
    $T_0=0$, $T_m=\sum_{j=1}^m\xi_j$, $m\in\IN$. Denote by $u_n$ the
    probability that $n$ is a renewal time. Then $u_0=1$
    and \[u_n=\sum_{k=1}^nf_ku_{n-k}>0,\quad n\ge 1.\] On the other
    hand,
    \begin{multline*}
      \sum_{m=1}^\infty\sum_{a=x_0<\dots<x_m=b}
    \prod_{j=1}^m\left(\frac{e^{-K}\delta}{2(1-\delta)}\right)
    \frac{1}{x_j-x_{j-1}}\\=
    \theta^{-(b-a)}\sum_{m=1}^\infty\sum_{a=x_0<\dots<x_m=b}
    \prod_{j=1}^m\left(\frac{e^{-K}\delta}{2(1-\delta)}\right)
    \frac{\theta^{x_j-x_{j-1}}}{x_j-x_{j-1}}=\theta^{-(b-a)}u_{b-a}.
    \end{multline*}
    By the renewal theorem (\cite{Fe68}, Ch.\,13, Sec.\,11),
    $u_n\to \mu^{-1}$ as $n\to\infty$, where $\mu=\sum_{n=1}^\infty
    nf_n<\infty$. This implies that $\min_{n\in\mathbb{N}}u_n>0$, and
    the claim follows.
  \end{proof}
\begin{proof}[Proof of Lemma~\ref{below1}]
  Denote the right hand side of (\ref{bel1}) by $C_m(\bar{x})$. Let
  $U$ be any potential on $(a,b)\setminus\{x_1,x_2,\dots,x_{m-1}\}$
  such that $U(x)\not\in[\kappa,K]$. Then it is enough to show that
  conditional on $V=U$ on $(a,b)\setminus\{x_1,x_2,\dots,x_{m-1}\}$
  \[Q_b(\Omega_{(a,b)}(x_1,x_2,\dots,x_{m-1})\,|\,U)\ge
  C_m(\bar{x})\,Q_b(\Omega_{(a,b)}|\,U).\] This is equivalent to the
  inequality
  \begin{equation}
    \label{bel2}
    \frac{\IE E^0\left(e^{-\sum_{n=0}^{\tau_b-1}V(S_n)};
        \Omega_{(a,b)}(x_1,x_2,\dots,x_{m-1})
        \,\big|\,U\right)}{\IE E^0\left(e^{-\sum_{n=0}^{\tau_b-1}
          V(S_n)};\Omega_{(a,b)}\,\big|\,U\right)}\ge
        C_m(\bar{x}).
  \end{equation}
  From now on assume that $V(x)=U(x)$ for all
  $x\in(a,b)\setminus\{x_1,x_2,\dots,x_{m-1}\}$ and drop the
  conditioning from the notation.

  Restricting the random walk expectation to those paths which on their
  way to $b$ hit every $x_i$, $i\in\{0,1,\dots,m-1\}$, only once, we
  obtain a lower bound on the numerator of (\ref{bel2}):
  \begin{align*}
    &\IE E^0\left(e^{-\sum_{n=0}^{\tau_b-1}V(S_n)};
      \left(\cap_{i=1}^m\{\tau_{x_{i-1}}^{(2)}>\tau_{x_i}\}\right)\cap
      \Omega_{(a,b)}(x_1,x_2,\dots,x_{m-1})
    \right)=\\
    & \IE E^0\left(e^{-\sum_{n=0}^{\tau_a}V(S_n)}\right)
    \left(\IE\left(e^{-V(0)};V(0)\in[\kappa,K]\right) \right)^{m-1}
    \times \\ &\makebox[4cm]{\ }\prod_{i=1}^mE^{x_{i-1}}
    \bigg(e^{-\sum\limits_{n=\tau_{x_{i-1}}+1}^{\tau_{x_i}-1}U(S_n)};\tau_{x_{i-1}}^{(2)}>\tau_{x_i}
    \bigg)\ge\\&\IE
    E^0\left(e^{-\sum_{n=0}^{\tau_a}V(S_n)}\right)\left(\frac{\delta}{e^K}
    \right)^{m-1} \prod_{i=1}^mE^{x_{i-1}}
    \bigg(e^{-\sum\limits_{n=\tau_{x_{i-1}}+1}^{\tau_{x_i}-1}U(S_n)};\tau_{x_{i-1}}^{(2)}>\tau_{x_i}
    \bigg)=\end{align*}\begin{align*} 
&\IE
    E^0\left(e^{-\sum_{n=0}^{\tau_a}V(S_n)}\right)\left(\frac{\delta}{e^K}
    \right)^{m-1} \prod_{i=1}^m\frac{1}{2(x_i-x_{i-1})}\times\\
    &\makebox[5cm]{\ }\prod_{i=1}^mE^{x_{i-1}}
    \bigg(e^{-\sum\limits_{n=\tau_{x_{i-1}}+1}^{\tau_{x_i}-1}U(S_n)}\big|
    \,\tau_{x_{i-1}}^{(2)}>\tau_{x_i}\bigg).
  \end{align*}
  To estimate the denominator of (\ref{bel2}), we first define the
  following random times:
\begin{align*}
  \sigma_i&=\sup\{n\le \tau_b:S_n=x_{i-1}\},\ i\in\{1,2,\dots,m\};\\
  \rho_i&=\inf\{n>\sigma_i:S_n=x_i\},\ i\in\{1,2,\dots,m\}.
\end{align*}
The denominator of (\ref{bel2}) is clearly bounded above by
\begin{align*}
  &\IE E^0\left(e^{-\sum_{n=0}^{\tau_a}V(S_n)}\right)\IE E^a
  \bigg(e^{-\sum\limits_{i=1}^m\sum\limits
    _{n=\sigma_i+1}^{\rho_i-1}U(S_n)};\Omega_{(a,b)}\bigg)\overset{\text{by (\ref{u})}}{\le}\\
  &\IE
  E^0\left(e^{-\sum_{n=0}^{\tau_a}V(S_n)}\right)(1-\delta)^{m-1}\prod_{i=1}^m
  E^{x_{i-1}}
  \bigg(e^{-\sum\limits_{n=\tau_{x_{i-1}}+1}^{\tau_{x_i}-1}U(S_n)}\big|
  \,\tau_{x_{i-1}}^{(2)}>\tau_{x_i}\bigg).
\end{align*}
Dividing the lower bound on the numerator by the upper bound on the
denominator of (\ref{bel2}) we obtain the statement of the lemma.
\end{proof}
We summarize the results of Corollary~\ref{above1} and
Lemma~\ref{theta}.
\begin{corollary}\label{above2}
  There are constants $C>0$ and $\theta\in(0,1)$ such that for all
  $0\le a<b\le y$ \[Q_b(\Omega_{(a,b)})\le C\theta^{b-a}.\]
\end{corollary}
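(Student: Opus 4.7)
The corollary is essentially a one-line synthesis of the two results immediately preceding it, so my plan is simply to chain them together. Corollary~\ref{above1} supplies the inequality $Q_b(\Omega_{(a,b)}) \le (C(a,b,\delta,K))^{-1}$, while Lemma~\ref{theta} supplies the matching lower bound $C(a,b,\delta,K) \ge c\,\theta^{-(b-a)}$ for the distinguished $\theta\in(0,1)$ characterised by $\frac{e^{-K}\delta}{2(1-\delta)}\sum_{k\ge 1}\theta^k/k = 1$. Inverting the second estimate and substituting it into the first yields
\[
Q_b(\Omega_{(a,b)}) \le (C(a,b,\delta,K))^{-1} \le c^{-1}\,\theta^{b-a},
\]
so the stated bound holds with $C := c^{-1}$.

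The constants $c$ and $\theta$ depend only on $\kappa$, $K$, and $\delta := \IP(V(0)\in[\kappa,K])$, and not on $a$, $b$, or $y$; this is precisely the uniformity the statement requires. I do not anticipate any real obstacle, because all of the combinatorial and renewal-theoretic work has already been carried out in the proofs of the two inputs. If anything, the only point worth double-checking in writing is that the renewal theorem applied in Lemma~\ref{theta} genuinely produces a positive infimum $\inf_{n\ge 0} u_n > 0$ — but this is immediate, since the step distribution $f_k = \frac{e^{-K}\delta}{2(1-\delta)}\theta^k/k$ has finite mean $\frac{e^{-K}\delta}{2(1-\delta)}\cdot\frac{\theta}{1-\theta}$ and is aperiodic (it charges $k=1$), so $u_n \to \mu^{-1}>0$ while $u_n>0$ for every $n\ge 1$, giving a uniform positive lower bound that absorbs cleanly into the constant $c$.
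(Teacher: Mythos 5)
Your proof is correct and is exactly the argument the paper has in mind: the paper introduces Corollary~\ref{above2} with the phrase ``We summarize the results of Corollary~\ref{above1} and Lemma~\ref{theta},'' i.e.\ the corollary is obtained precisely by chaining $Q_b(\Omega_{(a,b)}) \le (C(a,b,\delta,K))^{-1}$ with $C(a,b,\delta,K) \ge c\,\theta^{-(b-a)}$. Your side remark on aperiodicity and positivity of the $u_n$ is a reasonable sanity check that matches the reasoning already present in the proof of Lemma~\ref{theta}.
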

Corollary~\ref{above2} gives us (\ref{env2}) in
the case when $b=y$.  

\begin{proof}[Proof of Theorem~\ref{env}]
  We would like to get a bound on $Q_y(\Omega_{(a,b)})$. 
  Let $X=\min\{x\ge b:
  V(x)\in[\kappa,K]\}$. Since $Q_y$ (restricted to environment events)
  is absolutely continuous with respect to $\IP$,
  $Q_y(X=\infty)=0$. Using
  Lemma~\ref{ind} and Corollary~\ref{above2} we get
  \begin{align*}
    Q_y&(\Omega_{(a,b)})=\sum_{m=b}^\infty Q_y(\Omega_{(a,b)}\cap
    \{X=m\})\\&
    =\sum_{m=b}^{y-1}Q_y(\Omega_{(a,m)}\cap\{V(m)\in[\kappa,K]\})+\sum_{m=y}^\infty
    Q_y(\Omega_{(a,b)}\cap \{X=m\}) \\&\le
    \frac{1}{1-e^{-\kappa}}\,\sum_{m=b}^{y-1}Q_m(\Omega_{(a,m)})+\sum_{m=y}^\infty
    Q_y(\Omega_{(a,y)})\delta(1-\delta)^{m-y}
    \\&\le \frac{C\theta^{b-a}}{(1-e^{-\kappa})(1-\theta)}+C\theta^{y-a}\le
    M_3\theta^{b-a} 
  \end{align*}
for some constant $M_3$.
\end{proof}

\section{Proofs of technical lemmas}\label{tr}
Theorem~\ref{mess} gives us good control on environments under
probability measures $Q_y$ and $Q_{0,y}$ and we are now in a position
to prove Lemmas~\ref{conX}, \ref{probexp}, and \ref{taux}. 

We start with two auxiliary statements, Lemmas~\ref{expbd} and
\ref{geomb}. Recall that, given $\omega\in \Omega$, a site $x\in\IZ$
is called ``reasonable'' if $V(x,\omega)\in[\kappa,K]$, where
$0<\kappa<K<\infty$.  In Lemma~\ref{expbd} we argue that outside of an
event of exponentially small (in $y$) probability with respect to
$Q_y$ or $Q_{0,y}$ there are of order $y$ reasonable sites in
$(0,y)$. Lemma~\ref{geomb} shows that having so many reasonable sites
in $(0,y)$ and not having a renewal in $(0,y)$ is very unlikely. These
two facts easily imply Lemmas~\ref{conX} and
\ref{probexp}. Lemma~\ref{taux} requires additional steps, and its
proof takes the rest of the section.

Denote by $R_y$ the set of all reasonable sites in $\{0,1,\dots,y\}$
and by $|R_y|$ the number of elements in $R_y$.
\begin{lemma}
\label{expbd}
There exist $M_5$, $M_6$ and $\nu_1\in(0,1)$ so that for all $y>0$ and all
intervals $I\subset(0,y)$
\begin{align}
  &Q_{0,y}(|R_y\cap I|\le \nu_1|I|)\le M_5 e^{-\nu_1 |I|};\label{expbd1}\\
  &Q_y(|R_y\cap I|\le \nu_1|I|)\le M_6 e^{-\nu_1 |I|}.\label{expbd2}
\end{align}
\end{lemma}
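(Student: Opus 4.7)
The plan is to prove both bounds by enumerating, via Theorem~\ref{mess}, over the number $m$ and locations $x_1<\dots<x_m$ of reasonable sites inside $I$. I would write $I=(a,b)$ with $L := b-a = |I|$ and note that the event $\{R_y\cap I = \{x_1,\dots,x_m\}\}$ is exactly the environment event $\Omega_{(a,b)}(x_1,\dots,x_m)$ from Section~\ref{four}. Setting $x_0 = a$, $x_{m+1} = b$, the telescoping identity $\sum_{j=0}^{m}(x_{j+1}-x_j)=L$ combined with (\ref{mess2}) gives
\[
Q_y(\Omega_{(a,b)}(x_1,\dots,x_m)) \le M_2^{m+1}\theta^L,
\]
so summing over the at most $\binom{L}{m}$ possible location tuples yields $Q_y(|R_y\cap I| = m) \le \binom{L}{m} M_2^{m+1} \theta^L$. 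Exactly the same argument with (\ref{mess1}) handles the $Q_{0,y}$ case.

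The next step is to sum this over $0 \le m \le \nu_1 L$. Using the elementary estimate (for $\nu_1 < 1/2$ and assuming without loss of generality $M_2 \ge 1$)
\[
\sum_{m=0}^{\lfloor \nu_1 L\rfloor}\binom{L}{m} M_2^m \;\le\; C L\left(\frac{eM_2}{\nu_1}\right)^{\nu_1 L},
\]
one obtains $Q_y(|R_y\cap I| \le \nu_1 L) \le CM_2 L\bigl(\theta\,(eM_2/\nu_1)^{\nu_1}\bigr)^L$. Since $\nu_1\log(eM_2/\nu_1)\to 0$ as $\nu_1\to 0$ while $\theta\in(0,1)$ is fixed, one can choose $\nu_1$ small enough that $\theta(eM_2/\nu_1)^{\nu_1}\le e^{-2\nu_1}$; this choice absorbs the polynomial factor $CL$ into a new constant $M_6$ and yields (\ref{expbd2}). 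Repeating the argument with the constants from (\ref{mess1}) in place of those from (\ref{mess2}) gives (\ref{expbd1}).

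This is essentially a union-bound computation and presents no serious obstacle once Theorem~\ref{mess} is in hand. The only point requiring care is ensuring that the combinatorial factor $\binom{L}{m}M_2^m$, whose contribution is on the exponential scale $(eM_2/\nu_1)^{\nu_1 L}$, is dominated by the environmental decay $\theta^L$. This forces $\nu_1$ to be chosen small relative to $\log(1/\theta)$, but the choice is universal and independent of $y$, $L$, and the particular subinterval $I$.
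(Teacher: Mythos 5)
Your proposal is correct and follows essentially the same route as the paper: you decompose $\{|R_y\cap I|\le\nu_1|I|\}$ into the events $\Omega_I(x_1,\dots,x_m)$ with $m\le\nu_1|I|$, apply Theorem~\ref{mess} to bound each piece by $M^{m+1}\theta^{|I|}$, sum the resulting binomial-coefficient series, and choose $\nu_1$ small so the combinatorial growth $(eM/\nu_1)^{\nu_1|I|}$ is dominated by $\theta^{|I|}$. The only cosmetic difference is that you use the $\binom{L}{m}\le(eL/m)^m$ bound where the paper invokes Stirling directly, and a slightly different bookkeeping of the constants; the key choice — take $\nu_1$ small relative to $\log(1/\theta)$ — is identical.
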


\begin{proof}
  We shall prove only (\ref{expbd1}). Let $I=(a,b)$, $0\le a<b\le
  y$, and $r=b-a-1=|I|$. Notice that \[\{|R_y\cap I|\le \nu_1 r\} \ = \ 
  \bigcup_{k=0}^{[\nu_1 r]}\ \bigcup_{a<x_1<\dots<x_k<b}\Omega_{I}(x_1,
  x_2, \cdots x_k),\] where for $k=0$ the second union reduces to a
  set $\Omega_I$. Thus, by Theorem~\ref{mess}
\[
Q_{0,y}(|R_y\cap I|\le \nu_1|I|)\le \sum_{k=0}^{[\nu_1 r]}
\binom{r}{k} M_1^{k+1} \theta ^{r+1}\le 2 \binom{r}{[\nu_1 r]}
M_1^{r\nu_1+1} \theta ^{r+1}
\]
for $\nu_1$ small.  Applying Stirling's formula to this bound, we have
that for some universal $C$,
\[
Q_{0,y}(|R_y\cap I|\le \nu_1|I|) \le CM_1 \theta^r\left(
  \frac{1}{1-\nu_1} \right) ^{(1- \nu_1)r} \left( \frac{M_1}{\nu_1}
\right) ^{\nu_1 r} < C M_1\left( \frac{1+ \theta}{2} \right) ^r
\]
for all $r$ provided that $\nu_1$ is chosen sufficiently small to ensure that
 \[(1-\nu_1)|\log(1-\nu_1)|+\nu_1|\log \nu_1|+\nu_1\log
M_1<\log\frac{1+\theta}{2\theta}.\qedhere\]
\end{proof}

For $0\le a<b\le y$ we define $B(a,b,y)$ to be the event that there
are no renewal points in interval $(a,b)$ up to time $\tau_y$.  The
event $K(a,b,\ell)$ contains all environments with at least $\ell$
reasonable sites in interval $(a,b)$.
\begin{lemma} \label{geomb} There exist nontrivial constants $C_2,\nu_2
 $ so that uniformly over $\ell$ and $0 \le a < b \le y$,
\[
Q_y(K(a,b,\ell) \cap B(a,b,y))  \le C_2 e^{- \nu_2 \ell}.
\]
\end{lemma}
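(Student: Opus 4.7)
The plan is to fix an environment $\omega$ with $m\ge\ell$ reasonable sites $x_1<\cdots<x_m$ in $(a,b)$ and to prove the stronger conditional estimate
\[
Q^{\omega,0}_y\bigl(B(a,b,y)\bigr)\;\le\;e^{-2\kappa m}.\qquad(\ast)
\]
Granting $(\ast)$, the identity $Q_y(A)=Z_y^{-1}\,\IE[Z^{\omega,0}_y Q^{\omega,0}_y(A)]$ applied to $A=K\cap B$ yields
\[
Q_y(K\cap B)=Z_y^{-1}\,\IE\bigl[\I_{K(\omega)}Z^{\omega,0}_y Q^{\omega,0}_y(B)\bigr]\;\le\;e^{-2\kappa\ell}\,Q_y(K)\;\le\;e^{-2\kappa\ell},
\]
which gives the lemma with $C_2=1$ and $\nu_2=2\kappa$.

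The key one-dimensional input is a parity argument: on $B$ every site in $(a,b)$ is visited at least twice by the walk before $\tau_y$, and since a nearest-neighbor path in $\IZ$ from $0$ to $y$ crosses each interior site an odd number of times, one actually has $\ell_y(x_j)\ge 3$ for every reasonable $x_j$. Setting $r_{x_j}:=\check P^{\omega,x_j}(\tau_{x_j}^{(2)}<\tau_y)$, the killed-walk description immediately gives $r_{x_j}\le e^{-V(x_j)}\le e^{-\kappa}$. A strong Markov decomposition at the successive visit times $\tau_{x_j}^{(i)}$, combined with the one-dimensional fact that any path from $0$ to $y$ must pass through $x_j$ (so that $Z^{\omega,0}_y=Z^{\omega,0}_{x_j}\,Z^{\omega,x_j}_y$), gives the marginal identity
\[
Q^{\omega,0}_y(\ell_y(x_j)\ge k)\;=\;r_{x_j}^{k-1},\qquad k\ge 1,
\]
and in particular $Q^{\omega,0}_y(\ell_y(x_j)\ge 3)\le e^{-2\kappa}$.

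The main obstacle is to promote this marginal bound to the joint estimate
\[
Q^{\omega,0}_y\Bigl(\bigcap_{j=1}^m\{\ell_y(x_j)\ge 3\}\Bigr)\;\le\;\prod_{j=1}^m r_{x_j}^2,
\]
since these events are not independent under $Q^{\omega,0}_y$. The plan is a nested loop decomposition. Write the path from $0$ to $y$ as an initial segment up to $\tau_{x_1}$, followed by $\ell_y(x_1)-1$ loops at $x_1$ (excursions $x_1\to x_1$ with no intermediate visit to $x_1$), followed by a final segment from $x_1$ to $y$ that avoids further visits to $x_1$. Summing over loops at $x_1$ produces a geometric series with ratio $r_{x_1}$, and the constraint $\ell_y(x_1)\ge 3$ forces at least two loops, extracting a factor $r_{x_1}^2$. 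By one-dimensionality, down-loops at $x_1$ stay in $(-\infty,x_1)$ and contribute nothing to $\ell_y(x_j)$ for $j\ge 2$, while up-loops at $x_1$ and the final segment lie in $(x_1,\infty)$. Applying the same decomposition at $x_2$ inside the final segment and inside each up-loop at $x_1$ that reaches $x_2$, and then recursing on $x_3,\dots,x_m$, the 1D nesting ensures that the contributions factor cleanly into $\prod_{j=1}^m r_{x_j}^2$. Combined with the reduction in the first paragraph, this yields $(\ast)$ and completes the proof.
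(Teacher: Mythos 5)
Your argument has two fatal gaps. First, the parity claim is false: the number of visits to an interior site by a nearest-neighbor path from $0$ to $y$ need not be odd. For instance the path $0\to 1\to 0\to 1\to 2\to\cdots\to y$ visits the site $1$ exactly twice. What is forced to be odd is the number of traversals of the \emph{edge} $\{x-1,x\}$, which places no parity constraint on $\ell_y(x)$. So on $B(a,b,y)$ you only know $\ell_y(x_j)\ge 2$, and the most you could hope to extract per site is a factor $r_{x_j}$, not $r_{x_j}^2$.

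Second, and more seriously, even the weaker joint bound $Q^{\omega,0}_y\bigl(\bigcap_{j}\{\ell_y(x_j)\ge 2\}\bigr)\le \prod_{j} r_{x_j}$ that this would lead to is simply false, so no version of the ``nested loop decomposition'' can close the argument. The events $\{\ell_y(x_j)\ge 2\}$ are \emph{positively} correlated: an up-loop at $x_1$ that reaches $x_2$ increases both $\ell_y(x_1)$ and $\ell_y(x_2)$, so conditioning on many loops at $x_1$ inflates, rather than leaves untouched, the conditional probability of returning to $x_2$. Concretely, take $y=3$, $x_1=1$, $x_2=2$, $V(1)=V(2)=\kappa$ and $V\equiv 0$ elsewhere, and write $u=e^{-\kappa}$. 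A direct computation gives $r_1=u(2+u)/4$ and $r_2=u^2/(2(2-u))$, while $Q^{\omega,0}_3(\ell_3(1)\ge 2,\ \ell_3(2)\ge 2)=r_2$ exactly, because here $\{\ell_3(2)\ge 2\}\subset\{\ell_3(1)\ge 2\}$ (any return excursion from $2$ must go left of $2$, since stepping right terminates the path). Since $r_1<1$, this strictly exceeds $r_1r_2$, so your $(\ast)$ fails. The paper sidesteps exactly this dependence in a qualitatively different way: instead of extracting a factor $e^{-\kappa}$ for \emph{every} reasonable site, it fixes $r$ with $e^{-\kappa r}\le 1/3$, subsamples ``good'' reasonable sites spaced at least $r$ apart via the ``alive'' construction, and couples the walk with a $p=2/3$ biased simple random walk so that Lemma~\ref{Zzz} delivers an exponential bound in the number of good sites, which is proportional to $\ell$.
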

Our proof is based on a coupling with a simple asymmetric random
  walk. 
  We shall use the following two elementary lemmas.
\begin{lemma}
\label{Zzz}
Let $Y_n$, $n\ge 0$, be a simple asymmetric random walk with the
rightward probability $p\in(1/2,1)$. Let $B(0,m)$ be the event that
there are no renewal points in $(0,m)$.  There exists $c = c(p) >0$ so
that for each $m>1$,
\[
P(B(0,m)|Y_0=0) \leq e^{-cm}.
\]
\end{lemma}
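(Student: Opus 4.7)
The plan is to exploit the strong Markov property and positive drift of $Y$. For each $k \ge 0$, let $\tau_k = \inf\{n : Y_n = k\}$ (finite a.s.\ since $p>1/2$) and decompose the event $\{L_k \ge 2\}$ (where $L_k$ denotes the total number of visits of $Y$ to $k$) as
\[
\{L_k \ge 2\} \;=\; \{J_{k+1} \ge 2\} \cup \{X_k \ge 1\},
\]
where $J_{k+1}$ counts visits to $k$ during the excursion $[\tau_k,\tau_{k+1})$ and $X_k := (k+1) - \inf_{n\ge\tau_{k+1}} Y_n$ is the drawdown of the fresh walk after $\tau_{k+1}$. By strong Markov, $\{J_{k+1}\}_{k\ge 0}$ are i.i.d.\ geometric with parameter $p$, and each $X_k$ is marginally geometric with $P(X_k \ge j) = r^j$ where $r := (1-p)/p < 1$.

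The event $B(0,m)$ requires $L_k \ge 2$ for every $k \in \{1,\ldots,m-1\}$. Let $T := \{k \in \{1,\ldots,m-1\} : J_{k+1}=1\}$ denote the set of ``trivial-excursion'' sites. Since $|T|$ is Binomial with mean $p(m-1)$, Chernoff applied to the i.i.d.\ Bernoulli($p$) indicators gives $P(|T| < \alpha m) \le e^{-c_1 m}$ for any fixed $\alpha < p$; since $p > 1/2$, one may choose some $\alpha > 1/2$. On the complementary event $|T| \ge \alpha m$, $B(0,m)$ forces $X_k \ge 1$ for every $k \in T$. The key estimate is then
\[
P\bigl(X_k \ge 1\ \text{for all}\ k \in T\,\big|\,T\bigr)\ \le\ C\,r^{|T|},
\]
which combined with the Chernoff bound yields $P(B(0,m) \mid Y_0 = 0) \le e^{-c_1 m} + C r^{\alpha m} \le e^{-cm}$ for some $c = c(p) > 0$.

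To prove the joint bound on the $X_k$'s, I would use the Markov recursion $X_{k-1} = \max(D_k, X_k - 1)$, where $D_k$ is the depth of the excursion $\mathcal{E}_{k+1}$ below level $k$ (so $D_k = 0$ iff $k \in T$, and otherwise $P(D_k \ge d) \asymp r^d$). On any maximal consecutive run of $T$ of length $s$, the constraints $X_k \ge 1$ telescope through the recursion (since each step decrements $X$ by $1$) and reduce to the single requirement $X_{\text{top of run}} \ge s$, an event of probability $r^s$. Runs of $T$ are separated by indices in $T^c$ at which $D \ge 1$, and by strong Markov applied at the intermediate ladder times the contributions from different runs decouple, giving the factorized bound $r^{|T|}$ up to a polynomial combinatorial factor.

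The main obstacle is justifying the factorization across runs, since naively the $X_k$'s are positively correlated: a single deep drawdown of the walk can satisfy $\{X_k \ge 1\}$ for many distinct $k$ simultaneously. However, the exponential tail $P(X_k \ge d) = r^d$ makes any drawdown deep enough to cover $d$ levels cost $r^d$, so such ``sharing'' of deep dips cannot defeat the overall rate, and the bound $C r^{|T|}$ persists after careful bookkeeping of the possible ways the drawdown events can be realized.
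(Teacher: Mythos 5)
Your overall strategy is genuinely different from the paper's. The paper builds a \emph{terminating renewal sequence} $L_0=1$, $L_{i+1}=1+\max_{\tau_{L_i}\le n\le\tau_{L_i}^{(2)}}Y_n$: the walk's ``excursion heights'' above successive ladder points, with termination (probability $2p-1$) exactly when a renewal point appears. Then $B(0,m)$ is $\{L_N\ge m\}$ with $N$ geometric, and a one-line Chernoff bound using the exponential tail of the i.i.d.\ increments $L_{i+1}-L_i$ finishes the proof, with no correlation issues to handle. Your approach is site-by-site via the split $L_k\ge 2\Leftrightarrow\{J_{k+1}\ge 2\}\cup\{X_k\ge 1\}$, which is a sound reformulation, but it trades the paper's clean renewal structure for a genuinely correlated family $\{X_k\}$.

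There is a real gap, and in fact your stated key estimate is false. If $k\in T$ but $k+1\notin T$, then $D_{k+1}\ge 1$ by definition of $T$, and since $X_k=\max(D_{k+1},X_{k+1}-1)\ge D_{k+1}$, the event $\{X_k\ge 1\}$ holds \emph{automatically} given $T$: $P(X_k\ge 1\mid T)=1$ for every $k\in T$ that is not immediately followed by another $T$-site. Hence $P\bigl(\bigcap_{k\in T}\{X_k\ge 1\}\mid T\bigr)\le C\,r^{|T|}$ cannot hold: for instance, take $T$ to be runs of length $2$ separated by singleton gaps, so $|T|=2(m-1)/3>\tfrac12 m$; the conditional probability is then roughly $r^{\#\mathrm{runs}}=r^{(m-1)/3}$, not $r^{|T|}=r^{2(m-1)/3}$. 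The correct cost per run of length $s$ is about $r^{s-1}$, not $r^s$, so the exponent is $|T|$ minus the number of runs, which can be much smaller than $|T|$ (it is still of order $m$ because $\alpha>1/2$, so the approach may be salvageable with a corrected exponent). More importantly, the ``decoupling across runs by strong Markov'' is precisely where the difficulty lies and is not actually established: the constraint for run $i$ is $\{\inf_{n\ge\tau_{k_i+s_i}}Y_n\le k_i\}$, an event about the entire future of the walk that overlaps with the constraints of all later runs. A single deep drawdown can cover several runs at once, so these events are positively correlated, and you would need a genuine argument (e.g., a careful union bound over which excursion covers which run, paying $r^d$ for a depth-$d$ dip covering at most $d$ sites of $T$) to show that sharing cannot beat the exponential rate. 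Your proposal names this obstacle but does not resolve it; the paper's ladder-height renewal construction sidesteps it entirely.
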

This statement follows from much more general arguments in \cite{Sz00}
(Lemma 1.2) and \cite{Zy09} (Proposition 4.3). Since the proof in our
case is basic, we give it in the Appendix for completeness.

Denote the first $\ell$ reasonable sites in $(a,b)$ by $x_i$,
$i\in\{1,2,\dots,\ell\}$.  It is easily seen that we may suppose
without loss of generality that the sequence $x_i $ extends to
infinity in interval $[b, \infty)$.
\begin{lemma}
  \label{ret1}
  Let $r\in\IN$, $x_i,x_{i+1},\dots,x_{i+r}$ be reasonable points, and
  $x_i<x_{i+1}<\dots<x_{i+r}<y$. Then for all $\omega\in\{Z^{\omega,x_i}_y>0\}$
  \[(a)\ Q^{\omega,x_i}_y(\tau_{x_{i+r}}>\tau_{x_i}^{(2)})\le e^{-\kappa};\quad
   (b)\ Q^{\omega,x_{i+r}}_y(\tau_{x_i}<\tau_y)\le e^{-\kappa r}.\]
\end{lemma}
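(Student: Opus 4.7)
The plan is to reduce both inequalities to identities between partition functions obtained via the strong Markov property of the killed random walk, and then to extract the factor $e^{-\kappa}$ (respectively $e^{-\kappa r}$) from the potential at the reasonable points the walk is forced to visit. The key enabler is the one-dimensional topology: any path starting at $x_i$ (resp.\ $x_{i+r}$) and ending at $y$ must cross $x_{i+r}$ (resp.\ $x_i, x_{i+1},\dots,x_{i+r-1}$), which legitimizes strong Markov surgery at these hitting times.

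For part (a), I would first observe that on $\{\tau_y<\infty\}$ one has $\tau_{x_{i+r}}\le\tau_y$ by the 1D ordering $x_i<x_{i+r}<y$, so on the event $\{\tau_{x_{i+r}}>\tau_{x_i}^{(2)}\}$ we must in fact have $\tau_{x_i}^{(2)}<\tau_{x_{i+r}}\le\tau_y$. It therefore suffices to bound $Q^{\omega,x_i}_y(\tau_{x_i}^{(2)}<\tau_y)$. By strong Markov applied at time $\tau_{x_i}^{(2)}$,
\[
\check{P}^{\omega,x_i}\bigl(\tau_{x_i}^{(2)}<\tau_y<\infty\bigr)
= E^{x_i}\bigl(\I_{\{\tau_{x_i}^{(2)}<\tau_y\}}\,e^{-\sum_{n=0}^{\tau_{x_i}^{(2)}-1}V(S_n)}\bigr)\cdot Z^{\omega,x_i}_y.
\]
Because $S_0=x_i$ contributes $V(x_i)\ge\kappa$ to the exponential and the remaining summands are non-negative, the first factor is at most $e^{-\kappa}$. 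Dividing by $Z^{\omega,x_i}_y$ gives (a).

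For part (b), introduce the shorthand $\check{Z}^\omega(u,v;y):=E^u(\I_{\{\tau_v<\tau_y\}}e^{-\sum_{n=0}^{\tau_v-1}V(S_n)})$ for the quenched partition function of paths from $u$ to $v$ that avoid $y$. Applying strong Markov at $\tau_{x_i}$ to the numerator of $Q^{\omega,x_{i+r}}_y(\tau_{x_i}<\tau_y)$ gives
\[
\check{P}^{\omega,x_{i+r}}\bigl(\tau_{x_i}<\tau_y<\infty\bigr)=\check{Z}^\omega(x_{i+r},x_i;y)\cdot Z^{\omega,x_i}_y.
\]
The denominator $Z^{\omega,x_{i+r}}_y$ is related to $Z^{\omega,x_i}_y$ via strong Markov at $\tau_{x_{i+r}}$ (which is $\le\tau_y$ on $\{\tau_y<\infty\}$):
\[
Z^{\omega,x_i}_y=\check{Z}^\omega(x_i,x_{i+r};y)\cdot Z^{\omega,x_{i+r}}_y.
\]
Taking the ratio, the factors $Z^{\omega,x_{i+r}}_y$ cancel exactly, leaving
\[
Q^{\omega,x_{i+r}}_y(\tau_{x_i}<\tau_y)=\check{Z}^\omega(x_{i+r},x_i;y)\cdot\check{Z}^\omega(x_i,x_{i+r};y).
\]
The second factor is bounded by $1$ trivially. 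For the first, any trajectory from $x_{i+r}$ to $x_i$ must pass through each of the $r$ reasonable points $x_{i+r},x_{i+r-1},\dots,x_{i+1}$, each contributing at least $\kappa$ to $\sum_{n=0}^{\tau_{x_i}-1}V(S_n)$; hence $\check{Z}^\omega(x_{i+r},x_i;y)\le e^{-\kappa r}$, proving (b).

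The only delicate point is the cancellation in (b), which is what makes the bound $\omega$-uniform; without the 1D hypothesis the crossing of $x_{i+r}$ would not be forced and the factorization $Z^{\omega,x_i}_y=\check{Z}^\omega(x_i,x_{i+r};y)\cdot Z^{\omega,x_{i+r}}_y$ would fail, so this argument is genuinely one-dimensional.
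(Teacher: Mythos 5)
Your proof is correct and follows essentially the same route as the paper: part (a) is the strong Markov argument that the paper packages as a separate Lemma~\ref{ret}, and in part (b) your factorization $Q^{\omega,x_{i+r}}_y(\tau_{x_i}<\tau_y)=\check{Z}^\omega(x_{i+r},x_i;y)\cdot\check{Z}^\omega(x_i,x_{i+r};y)$ is the same identity the paper obtains by writing the conditional probability as $\check{P}^{\omega,x_{i+r}}(\tau_{x_i}<\tau_y)\cdot\check{P}^{\omega,x_i}(\tau_y<\infty)/\check{P}^{\omega,x_{i+r}}(\tau_y<\infty)$ and noting the last ratio is $\le 1$.
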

The proof is given in the Appendix.

\begin{proof}[Proof of Lemma~\ref{geomb}] Fix $r$ so that $e^{-\kappa
    r}\le 1/3$. We say that a reasonable site $x_i$ is {\it alive} if
  $\tau_{x_{i+r}}<\tau_{x_i}^{(2)}$.  By Lemma~\ref{ret1} and our
  choice of $r$, the probability that the random walk returns to an
  alive site prior to $\tau_y$ is less than $1/3$.  Of course, the
  events $\{x_i \mbox{ is alive}\}$ and $\{x_{j} \mbox{ is alive}\}$
  are correlated for $|i-j| < r$ but otherwise independent.  Thus, in
  order to secure some independence, we define $J \subset \{ x_1, x_2,
  \dots, x_\ell\}$ as $\{ x_{\lambda_1}, x_{\lambda _2}, \dots,
  x_{\lambda_m}\}$, where
\[\lambda_0 = 0, \ \lambda_{i+1} = \inf \{ j \ge
\ \lambda_i+ r:\, \tau_{x_{j+r}}<\tau_{x_j}^{(2)}\}.\]  

Points of $J$ are called {\it good} points.  They are our candidates
for renewal points.  Again we can extend good points infinitely
outside interval $(a,b)$.

Let $C(h,a,b) $ be the event that interval $(a,b)$ contains less than
$h$ good points.  Denote by $N$ a binomial random variable with
parameters $[\ell/r]$ and $1-e^{-\kappa}$. We first note that by the
strong Markov property, given that $ \omega $ is in $K(a,b,\ell)\cap
\{Z^{\omega,0}_y>0\}$, 
\[
Q^{0, \omega}_y\left(C\left(\frac{(1-e^{- \kappa
      })\ell}{2r},a,b\right)\right) \leq P\left(N <
  \frac{(1-e^{- \kappa })\ell}{2r} \right) < \ e^{-\epsilon_2 \ell},
\]
for some small universal $\epsilon _2$. 
We now suppose that event $C((1-e^{- \kappa })\ell/(2r),a,b)$ does
not occur, so that there are at least $(1-e^{- \kappa })\ell/(2r)$ good
points.

By Lemma~\ref{ret1} and our choice of $r$ we can couple our process
$S$ and a simple random walk, $Y$, with rightward probability
$p=2/3$ so that (even though $\tau_{x_{\lambda_j}} $ is not a
stopping time) for each $i > 0$,
\[
\big\{ j \leq i: x_{\lambda_j} \mbox{ is visited  in } (\tau^S _{x_{\lambda_i}},
\tau^S _{x_{\lambda_{i+1}}}) \mbox{ by }S \big\}
\]
is a subset of 
\[
\big\{ j \leq i: j \mbox{ is visited in } (\tau^Y _{i},
\tau^Y_{{i+1}}) \mbox{ by }Y \big\}.
\]
This coupling necessarily entails that for $1 \leq i \leq
(1-e^{- \kappa })\ell/(2r)$ \[\{i \mbox{ is a
  renewal point for }Y \}\subset \{x_{\lambda_ i}
\mbox{ is a renewal point for }S \}.\]
  
The result follows since $K(a,b,\ell) \cap B(a,b,y)$ is contained in
the union of events $C((1-e^{- \kappa
    })\ell/(2r),a,b)$ and $\{Y$ has no renewal points in
$(0,(1-e^{- \kappa })\ell/(2r))\}$.
\end{proof}

Now we can easily derive Lemma~\ref{conX}.
\begin{proof}[Proof of Lemma~\ref{conX}]
  Let $0\le r<y$. Then for some fixed $\nu_1\in(0,1)$ by
  Lemma~\ref{geomb} and (\ref{expbd2}) we get
  \begin{align*}
    Q_y(X_y>r)&=Q_y(X_y>r; K(0,r,\nu_1 r))+ Q_y(X_y>r;
    K^c(0,r,\nu_1 r))\\ &\le Q_y(K(0,r, \nu_1 r)\cap
    B(0,r,y))+Q_y(K^c(0,r,\nu_1 r))\\&\le C_2e^{-\nu_2 \nu_1
      r}+M_6 e^{-\nu_1 r}.\qedhere
  \end{align*}
\end{proof}

We are also ready to prove Lemma~\ref{probexp}.  
\begin{proof}[Proof of Lemma~\ref{probexp}]
 We have by (\ref{expbd1}) and
  Lemma~\ref{geomb} that there is $\nu_1\in(0,1)$ such that
  \begin{align*}
    \Zb_{0,r}&=\Qb_{0,r}(K^c(0,r,\nu_1
    r))\Zb_{0,r}+\Qb_{0,r}(K(0,r,\nu_1 r))\Zb_{0,r}\\&\le
    Q_{0,r}(K^c(0,r,\nu_1 r))Z_{0,r}+Q_r(K(0,r,\nu_1 r)\cap
    B(0,r,r))Z_r\\&\le (M_5e^{-\nu_1r}+C_2e^{-\nu_2\nu_1 r})Z_r.
  \end{align*}
  This proves (\ref{massgap}) as $(\log Z_r)/r\to-\beta$ as
  $r\to\infty$. 
 
  We also have the following fact, which we prove in the Appendix.
  \begin{lemma}
      \label{0}
$\displaystyle\lim_{y\to\infty}\dfrac 1y\log Z_{0,y}=-\beta$.
    \end{lemma}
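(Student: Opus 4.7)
The plan is to establish matching upper and lower bounds on $\frac{1}{y}\log Z_{0,y}$, the upper bound being essentially immediate and the lower bound coming from the discrete renewal identity already derived in the proof of Proposition~\ref{ingr}.

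For the upper bound, I would simply observe that $Z_{0,y} = \check{P}(\tau_y < \tau_0^{(2)},\tau_y<\infty) \le \check{P}(\tau_y<\infty) = Z_y$, and then use the definition (\ref{be}) of $\beta = \beta_V(1)$ to conclude $\limsup_{y\to\infty} y^{-1}\log Z_{0,y} \le -\beta$.

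For the lower bound, the key point is that the renewal decomposition of paths from $0$ to $y$ that avoid returning to $0$, which gave the identity used in (\ref{decoz}), yields
\begin{equation*}
Z_{0,y} \;=\; \sum_{k=1}^{y}\ \sum_{0=x_0<x_1<\cdots<x_k=y}\ \prod_{j=1}^{k}\Zb_{0,x_j-x_{j-1}}.
\end{equation*}
Multiplying by $e^{\beta y}$ and writing $e^{\beta y} = \prod_{j=1}^{k} e^{\beta(x_j-x_{j-1})}$ one obtains
\begin{equation*}
e^{\beta y} Z_{0,y} \;=\; \sum_{k=1}^{y}\ \sum_{0=x_0<x_1<\cdots<x_k=y}\ \prod_{j=1}^{k} q(x_j - x_{j-1}) \;=\; U(y),
\end{equation*}
where $U(\cdot)$ is the renewal function associated with the kernel $q(\cdot)$. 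By Lemma~\ref{probexp}, $q$ is a genuine probability distribution on $\IN$ with exponentially decaying tails, and in particular its mean $\mu=\sum_{r\ge 1} r\,q(r)$ is finite. Moreover $q(1)$ and $q(2)$ are both strictly positive (the walk can step from $0$ to $1$ or execute $0\to1\to2$ without returning to $0$, surviving the killing with positive probability), so $q$ is aperiodic.

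The discrete renewal theorem (Erdős--Feller--Pollard) then gives $U(y) \to 1/\mu \in (0,\infty)$ as $y\to\infty$, hence $e^{\beta y} Z_{0,y}$ is bounded away from $0$, yielding $\liminf_{y\to\infty} y^{-1}\log Z_{0,y} \ge -\beta$. Combined with the upper bound, this proves the lemma. The only nonroutine ingredient is the renewal identity above, but it is already implicit in the derivation of (\ref{decoz}); the main technical input, namely that $\sum q(r)=1$ with finite mean, is precisely Lemma~\ref{probexp}, so no new obstacle arises.
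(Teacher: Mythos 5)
Your upper bound $Z_{0,y}\le Z_y$ is fine and matches the paper. The lower bound, however, is circular. You invoke the ``Moreover, $\sum_{r\ge 1}q(r)=1$'' part of Lemma~\ref{probexp} to feed into the discrete renewal theorem. But in the paper, that very identity is \emph{derived from} Lemma~\ref{0}: the proof of Lemma~\ref{probexp} first establishes the exponential bound $q(r)\le e^{-\epsilon r}$ by a direct argument, then says ``we also have the following fact, which we prove in the Appendix'' (namely Lemma~\ref{0}), uses it to conclude that the power series $\sum_y s^y e^{\beta y}Z_{0,y}$ has radius of convergence exactly~$1$, and only from there deduces $\sum q(r)=1$ via the generating-function identity $\sum_y s^y e^{\beta y}Z_{0,y}=\sum_k(\sum_r q(r)s^r)^k$. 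So $\sum q(r)=1$ is downstream of Lemma~\ref{0}, not upstream of it, and your proof uses the conclusion to establish the premise.

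Note also that the exponential tail bound on $q$ alone (which \emph{is} proven independently) gives only $\sum q(r)<\infty$, not $\sum q(r)=1$; if one only knew $\sum q(r)<1$, the renewal function $U(y)=e^{\beta y}Z_{0,y}$ would tend to zero, and one would get no lower bound on $\liminf y^{-1}\log Z_{0,y}$ beyond $-\beta$ minus the decay rate. The paper instead proves the lower bound by a short, self-contained comparison: by Lemma~\ref{bias}, $Q_y^\omega(\tau_y<\tau_0^{(2)})\ge P^0(\tau_y<\tau_0^{(2)})=1/(2y)$ for $\IP$-a.e.\ $\omega$ with $Z_y^\omega>0$, so
\[
Z_{0,y}=\IE\bigl(Q_y^\omega(\tau_y<\tau_0^{(2)})\,Z_y^\omega\bigr)\ \ge\ \frac{Z_y}{2y},
\]
which immediately gives $\liminf_{y\to\infty}y^{-1}\log Z_{0,y}\ge -\beta$. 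This avoids any use of the renewal structure and, crucially, of $\sum q(r)=1$. If you want to keep the renewal-theoretic flavor, you would first have to prove $\sum q(r)=1$ by some means that does not pass through Lemma~\ref{0}; otherwise you should switch to the elementary polynomial lower bound above.
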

    This result immedaitely implies that the power series
    $\sum_{y=1}^\infty s^y e^{\beta y}Z_{0,y}$ has radius of
    convergence equal to $1$.  But decomposing $Z_{0,y}$ according to
    its renewal points (see the denominator in (\ref{decoq})) gives
    \[
    \sum_{y=1}^\infty s^y e^{\beta y}Z_{0,y} \ = \ \sum_{k=1}^\infty \left( \sum_{r=1}^\infty q(r)s^r  \right) ^k
    \]
    from which the conclusion that $\sum_{r=1}^\infty q(r) \ = \ 1$ is immediate.
\end{proof}

\begin{proof}[Proof of Lemma~\ref{taux}]
We have
\begin{equation}\label{tf}
  \frac{1}{y}\,E_{Q_y}(\tau_{X_y})= 
  \frac{1}{y}\,E_{Q_y}\Big(\sum_{ x\le -y^{1/4}}\ell_y(x)\Big) + 
  \frac{1}{y}\,E_{Q_y}\Big(\sum_{-y^{1/4}<x\le X_y}\ell_y(x)\Big).
\end{equation}
We shall
need the following two inequalities. Their proofs can be found in the Appendix.
\begin{lemma} \label{lemA} Let
  $B\in \sigma (\{V(x, \omega),x < 0\})$. Then
\[Q_y (B) \le 2y \IP(B).\]
\end{lemma}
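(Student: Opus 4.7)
The plan is to find a factorization of $Z^\omega_y=\check{P}^\omega(\tau_y<\infty)$ that isolates the dependence on $\{V(x):x<0\}$ into a single factor bounded by $1$, and then to exploit independence of $B$ from $\sigma(V(x),x\ge 0)$.

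First, I would apply the strong Markov property twice. Condition on the first step out of $0$ (killed with probability $1-e^{-V(0)}$, otherwise to $\pm 1$ with equal probability), and from $+1$ further decompose according to whether $\tau_y<\tau_0$ or $\tau_0<\tau_y$. Writing
\[
p_3=\tfrac{1}{2}e^{-V(0)},\quad p_1=\check{P}^{\omega,1}(\tau_y<\tau_0),\quad p_2=\check{P}^{\omega,1}(\tau_0<\tau_y),\quad H=\check{P}^{\omega,-1}(\tau_0<\infty),
\]
and using that from $-1$ the walk must pass through $0$ en route to $y$, I would arrive at the identity
\[
Z^\omega_y \;=\; \frac{p_3\,p_1}{1-p_3(p_2+H)}.
\]
Crucially, before $\tau_0\wedge\tau_y$ the walk started at $1$ stays in $\{1,\ldots,y-1\}$, so $p_1$ and $p_2$ depend only on $(V(x))_{1\le x\le y-1}$; together with $p_3$ they are measurable with respect to $\sigma(V(x),x\ge 0)$, while $H$ is measurable with respect to $\sigma(V(x),x<0)$, the same $\sigma$-algebra as $B$.

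Next, I would bound the denominator by comparison with a simple symmetric random walk (gambler's ruin): since $\check{P}^{\omega,\cdot}$ dominates $P^{\cdot}$ in the wrong direction, the survival probabilities satisfy $p_1\le 1/y$ and $p_2\le 1-1/y$, while trivially $p_3\le 1/2$ and $H\le 1$. Thus
\[
p_3(p_2+H)\;\le\; \tfrac12\!\left(1-\tfrac1y+1\right)\;=\;1-\tfrac{1}{2y},
\]
which gives the sandwich $p_3p_1\le Z^\omega_y\le 2y\,p_3p_1$.

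Finally, since $p_3p_1$ is independent of $B$,
\[
\IE\!\left[Z^\omega_y\I_B\right]\;\le\; 2y\,\IE[p_3p_1]\,\IP(B)\;\le\; 2y\,\IE Z^\omega_y\cdot\IP(B),
\]
and dividing by $\IE Z^\omega_y$ yields $Q_y(B)\le 2y\,\IP(B)$. The only delicate point is the derivation of the recursive identity and the verification that $p_1,p_2$ truly depend only on non-negative sites; the gambler's-ruin bound that produces the factor $2y$ is then immediate, and the factor is (up to constants) sharp, since when $V\equiv 0$ on $[1,y-1]$ and $H$ can be close to $1$ the denominator $1-p_3(p_2+H)$ is of order $1/y$.
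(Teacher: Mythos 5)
Your proof is correct, and it takes a genuinely different route from the paper's. The paper performs a last-exit decomposition at the origin: letting $\zeta$ be the last visit to $0$ before $\tau_y$, it drops the weight $e^{-\sum_{n<\zeta}V(S_n)}\le 1$ in the numerator, restricts the denominator to the event $\{\tau_0^{(2)}>\tau_y\}$, observes that the post-$\zeta$ excursion lives in $[0,y)$ (hence is independent of $B$), and invokes the identity of Lemma~\ref{a2} together with $P^0(\tau_0^{(2)}>\tau_y)=\frac{1}{2y}$. You instead perform a first-step/renewal decomposition, solving explicitly for $Z^\omega_y=p_3p_1/\bigl(1-p_3(p_2+H)\bigr)$, and isolating all the dependence on $\sigma(V(x),x<0)$ into the single factor $H\le 1$ in the denominator; the gambler's-ruin bound $p_2\le 1-\frac1y$ plus $p_3\le\frac12$ then give $Z^\omega_y\le 2y\,p_3p_1$, and the trivial lower bound $Z^\omega_y\ge p_3p_1$ together with independence closes the argument. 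Both approaches hinge on the same two facts — that the ``$0$-to-$y$ without returning to $0$'' part of the partition function is independent of $B$, and that gambler's ruin supplies the $2y$ — but yours is more self-contained (no appeal to a separate last-exit identity), while the paper's is shorter once Lemma~\ref{a2} is available. One can quibble with the phrase ``dominates $P^\cdot$ in the wrong direction''; what you need, and what is true, is simply that $\check P^{\omega,1}(\tau_0<\tau_y)\le P^1(\tau_0<\tau_y)=1-\frac1y$, killing only ever decreasing such probabilities. The closing remark on sharpness of the constant is not needed but is also correct.
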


\begin{lemma}
  \label{bias} For every $z\le x\le y$, $m\in\IZ$, and
  $\IP$-a.e. $\omega\in \{Z^\omega_y>0\}$
      \[Q_y^\omega(\ell_x(z)>m)\le P^0(\ell_x(z)>m).\]
    \end{lemma}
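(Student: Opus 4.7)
The plan is to apply the strong Markov property of both the $h$-transformed chain $Q_y^\omega$ and the killed walk $\check{P}^\omega$ to split $\ell_x(z)$ into a first-hit and a geometric return part, and to bound each piece separately.

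First I would observe that $\{\ell_x(z) > m\} = \{\tau_z^{(m+1)} < \tau_x\}$, that the $Q_y^\omega$-walk is Markov, and that its strong Markov property at $\tau_z$ gives
\[Q_y^\omega(\ell_x(z) > m) = Q_y^\omega(\tau_z < \tau_x)\cdot Q_y^{\omega,z}(\ell_x(z) > m).\]
Under $Q_y^{\omega,z}$, applying strong Markov of $\check{P}^{\omega,z}$ at each successive return to $z$ causes the factor $\check{P}^{\omega,z}(\tau_y<\infty)$ in the denominator of $Q_y^{\omega,z}$ to cancel, yielding
\[Q_y^{\omega,z}(\ell_x(z) > m) = \bigl[\check{P}^{\omega,z}(\tau_z^{(2)} < \tau_x)\bigr]^m \le \bigl[P^z(\tau_z^{(2)} < \tau_x)\bigr]^m,\]
where the last step is just the Feynman--Kac bound $e^{-\sum V}\le 1$. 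The analogous decomposition for $P^0$ then reduces the whole lemma to the first-hit inequality
\[Q_y^\omega(\tau_z < \tau_x) \le P^0(\tau_z < \tau_x).\]

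For this inequality the only nontrivial configuration is $z<0<x\le y$: in all other configurations one-dimensional path geometry forces $P^0(\tau_z<\tau_x)\in\{0,1\}$ with matching triviality of the left-hand side. In the nontrivial case I would invoke the $h$-transform description of $Q_y^\omega$: it is a Markov chain with transitions
\[\pi_w^\pm := \frac{e^{-V(w,\omega)}\, Z^{\omega,w\pm 1}_y}{2\, Z^{\omega,w}_y},\qquad w<y.\]
The key monotonicity $Z^{\omega,w-1}_y \le Z^{\omega,w+1}_y$ (valid for $w+1\le y$) follows from the one-dimensional ``passing-through'' identity $Z^{\omega,w-1}_y = Z^{\omega,w-1}_{w+1}\cdot Z^{\omega,w+1}_y$ together with $Z^{\omega,w-1}_{w+1}\le 1$; equivalently $\pi_w^+ \ge \tfrac12$ for every $w\in(z,x)$, so the $Q_y^\omega$-chain is everywhere right-biased in this interval.

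\textbf{Main obstacle.} The main obstacle is upgrading this stepwise bias into the global hitting-probability comparison $Q_y^\omega(\tau_x<\tau_z)\ge P^0(\tau_x<\tau_z)$, which by complementation is equivalent to what we want. I would compare the $\pi$-harmonic function $W_w:=Q_y^{\omega,w}(\tau_x<\tau_z)$ with the $\tfrac12$-harmonic function $U_w:=(w-z)/(x-z)$ on $[z,x]$. Since they share the boundary values $0$ and $1$, a discrete maximum principle applied to $W-U$ at an interior minimum $w^*$ using
\[W_{w^*}-U_{w^*} = \pi_{w^*}^+(W_{w^*+1}-U_{w^*+1}) + \pi_{w^*}^-(W_{w^*-1}-U_{w^*-1}) + \bigl(\pi_{w^*}^+-\tfrac12\bigr)(U_{w^*+1}-U_{w^*-1})\]
together with $U_{w^*+1}-U_{w^*-1}=2/(x-z)>0$ and $\pi_{w^*}^+\ge \tfrac12$ forces any strictly negative minimum to propagate to a neighbor and then iteratively to the boundary, contradicting $W-U\equiv 0$ there. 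Therefore $W\ge U$ pointwise; evaluation at $0$ yields the required first-hit bound and completes the proof.
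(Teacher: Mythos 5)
Your proof is correct, but it takes a genuinely different and considerably longer route than the paper's. The paper disposes of the lemma in a three-line Feynman--Kac computation: apply the strong Markov property of the killed walk at $\tau_z^{(m+1)}$ to factor out $Z^{\omega,z}_y$, bound the remaining weight accumulated before $\tau_z^{(m+1)}$ by $1$ (simply $e^{-\sum V}\le 1$), and finish with the ratio $Z^{\omega,z}_y/Z^\omega_y\le 1$, which is immediate from the one-dimensional pass-through identity $Z^{\omega,z}_y=Z^{\omega,z}_0\,Z^\omega_y$ for $z\le 0$. No $h$-process, no maximum principle. You instead pass to the Markov description of $Q^\omega_y$, peel off the geometric return factor (where you rediscover the same $e^{-\sum V}\le 1$ bound in the form $\check P^{\omega,z}(\tau_z^{(2)}<\tau_x)\le P^z(\tau_z^{(2)}<\tau_x)$), and reduce everything to the first-hit inequality $Q^\omega_y(\tau_z<\tau_x)\le P^0(\tau_z<\tau_x)$, which you then establish by a gambler's-ruin comparison: the step bias $\pi_w^+\ge\tfrac12$ from $Z^{\omega,w-1}_y\le Z^{\omega,w+1}_y$, together with a discrete maximum principle on $W-U$. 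Note that the local monotonicity $Z^{\omega,w-1}_y\le Z^{\omega,w+1}_y$ driving your comparison is exactly the same pass-through observation the paper applies once, at the single pair $(z,0)$ -- so the two proofs share a common kernel, but you wrap it in substantially more machinery. What your route buys is a structurally illuminating fact that the paper's direct computation leaves implicit: the quenched path measure conditioned to reach $y$ is a nearest-neighbour Markov chain with uniform rightward drift on $(-\infty,y)$, a fact one can reuse for other stochastic-domination bounds. Two minor points you should tidy: (i) address the degenerate case $V(w)=\infty$ for some $w\in(z,0)$, where the transition probabilities $\pi^\pm_w$ are not defined but $Q^\omega_y(\tau_z<\tau_x)=0$ holds trivially; (ii) in the maximum-principle step, if $\pi_{w^*}^+>\tfrac12$ strictly you get an immediate contradiction rather than mere propagation, which is worth stating since it shortens the iteration.
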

Now we can estimate the first term in the right hand side of (\ref{tf}).
  \begin{lemma} 
\label{bd1}
$\displaystyle\lim_{y\to\infty}E_{Q_y} \Big(\sum_ {x \le -y^{1/4} }
\ell_y(x)\Big) =0$.
\end{lemma}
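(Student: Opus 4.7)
The plan is to bound $E_{Q_y}[\ell_y(z)]$ site by site for $z\le -r$ (with $r:=y^{1/4}$) and then sum.  Since $\{\ell_y(z)>0\}=\{\tau_z<\tau_y\}$, Cauchy--Schwarz combined with the stochastic dominance of Lemma \ref{bias} (which gives $E_{Q_y^\omega}[\ell_y(z)^2]\le E^0[\ell_y(z)^2]$ pointwise in $\omega$, and hence $E_{Q_y}[\ell_y(z)^2]\le E^0[\ell_y(z)^2]$) yields
\[
E_{Q_y}[\ell_y(z)]\le \sqrt{E^0[\ell_y(z)^2]}\cdot\sqrt{Q_y(\tau_z<\tau_y)}.
\]
A direct gambler's ruin computation using the geometric return structure of simple random walk shows $E^0[\ell_y(z)^2]\le 8y(y-z)$ for $z\le 0<y$.

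The heart of the argument is the exponential-in-$|z|$ bound on $Q_y(\tau_z<\tau_y)$.  Fix $\nu\in(0,\delta)$ where $\delta=\IP(V(0,\omega)\in[\kappa,K])$, and introduce the environment event
\[G_z=\{|R_y\cap[z,-1]|\ge\nu|z|\}.\]
Since $G_z^c$ is measurable with respect to $\{V(x):x<0\}$, Lemma~\ref{lemA} together with a Chernoff bound for a Binomial$(|z|,\delta)$ gives $Q_y(G_z^c)\le 2y\,\IP(G_z^c)\le 2ye^{-c|z|}$ for some $c=c(\nu)>0$.  On $G_z$, the walk from $0$ to $z<0$ (nearest-neighbor in one dimension) must visit every reasonable site in $[z,-1]$, so
\[
\sum_{n<\tau_z}V(S_n)\ge \sum_{x^*\in R_y\cap[z+1,-1]}V(x^*)\ge\kappa(\nu|z|-1),
\]
which yields $\check{P}^{\omega,0}(\tau_z<\infty)\le e^{-\kappa(\nu|z|-1)}$ on $G_z$.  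Then the strong Markov property at $\tau_z$, the translation identity $\IE[\check{P}^{\omega,z}(\tau_y<\infty)]=Z_{y+|z|}$, and the trivial inequality $Z_{y+|z|}\le Z_y$ (walks from $0$ to $y+|z|$ must first hit $y$) combine to give
\[
Q_y(\tau_z<\tau_y)\le 2ye^{-c|z|}+e^{-\kappa(\nu|z|-1)}.
\]

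Combining the two displays and summing the resulting geometric series in $|z|\ge r$ produces
\[
E_{Q_y}\Big[\sum_{z\le -r}\ell_y(z)\Big]\lesssim y^{3/2}e^{-c''r/2},
\]
which tends to $0$ for $r=y^{1/4}$, since any polynomial in $y$ is dominated by $e^{c''y^{1/4}/2}$.  The main obstacle is the environment-to-quenched-survival bridge in the second step: the factor $2y$ lost when passing from $Q_y$ to $\IP$ via Lemma~\ref{lemA} must be absorbed by the exponentially small quenched survival probability across the reasonable sites, which is precisely why the threshold $y^{1/4}$ (so that $e^{-cy^{1/4}}$ defeats any power of $y$) is the right choice.
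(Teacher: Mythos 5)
Your argument is correct, and it takes a genuinely different route from the paper's. The paper first invokes its Lemma~\ref{vn} (a Lyapunov-exponent estimate showing $E_{Q_y}\sum_{z<-Ry}\ell_y(z)\le e^{-cy}$) to truncate to the window $[-Ry,-y^{1/4}]$, then introduces a single environment event $A_{\delta,y}$ counting reasonable sites in $(-y^{1/4},0)$, and finishes via Lemmas~\ref{bias}, \ref{lemA}, and \ref{ret1}(b). You instead work site by site: a Cauchy--Schwarz split $E_{Q_y}[\ell_y(z)]\le\sqrt{E^0[\ell_y(z)^2]}\sqrt{Q_y(\tau_z<\tau_y)}$, where the first factor is controlled by the stochastic dominance of Lemma~\ref{bias} plus the gambler's-ruin computation $E^0[\ell_y(z)^2]\le 8y(y-z)$, and the second by a per-site event $G_z$ counting reasonable sites in $[z+1,-1]$, with $Q_y(G_z^c)\le 2y\,\IP(G_z^c)$ via Lemma~\ref{lemA} and, on $G_z$, the deterministic survival cost $e^{-\kappa(\nu|z|-1)}$ combined with the strong Markov property and the pathwise inequality $Z^\omega_{y+|z|}\le Z^\omega_y$. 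The payoff of your route is that it sidesteps both Lemma~\ref{vn} and Lemma~\ref{ret1} entirely: the exponential decay in $|z|$ built into your per-site bound subsumes the truncation to $[-Ry,\cdot]$, and the translation inequality $Z_{y+|z|}\le Z_y$ replaces the $\Lambda_V$ calculation. The paper's approach, by contrast, reuses pieces (Lemma~\ref{ret1}) already needed elsewhere, so it is modular within the paper even if slightly less self-contained for this one lemma. One minor slip of notation: you write $R_y\cap[z,-1]$, but $R_y$ was defined as the reasonable sites in $\{0,\dots,y\}$, so this intersection is literally empty; what you mean is the set of reasonable sites in $[z,-1]$, and the argument goes through once that is clarified.
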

\begin{proof}
  First, we note that by Lemma~\ref{vn} it is enough to show that for
  $R$ fixed \[\lim_{y\to\infty} E_{Q_y}\Big(\sum_{n=0}^{\tau_y-1} \I_{\{-Ry
      \leq S_n \le -y^{1/4}\}}\Big)=0.\] We introduce the event
 \[A_{\delta,y} = \{\# \text{ of sites in $(-y^{1/4}, 0)$ that are reasonable
  is less than } \delta y^{1/4}/2 \}.\] We have immediately
  that
  \begin{multline}\label{ay}
    E_{Q_y}\Big(\sum_{n=0}^{\tau_y-1} \I_{\{-Ry \leq S_n \leq
    -y^{1/4}\}}\Big)\\=E_{Q_y}\Big(\sum_{n=0} ^{\tau_y-1} \I_{\{-Ry \leq S_n \leq
    -y^{1/4}\}}\I_{A_{\delta,y}}\Big)+E_{Q_y}\Big(\sum_{n=0}^{\tau_y-1} \I_{\{-Ry
    \leq S_n \leq -y^{1/4}\}}\I_{A_{\delta,y}^c}\Big).
\end{multline}
By Lemma~\ref{bias}, the first term is bounded by
\begin{multline*}
  Q_y(A_{\delta,y})
  E^0\Big(\sum_{n=0}^{\tau_y-1} \I_{\{-Ry \leq S_n \leq -y^{1/4}\}}\Big) \\
  \overset{\text{Lemma~\ref{lemA}}}{\le}2y
  \IP(A_{\delta,y})\sum_{-Ry\le x\le-y^{1/4}}E^0(\ell_y(x))\le Cye^{-c(\delta) y^{1/4}}R^2 y ^2
\end{multline*}
for a universal $C$. In the last line we used standard large
deviations bounds for Bernoulli random variables (see
e.g. \cite{DZ98}) and the fact that for every $-Ry < x < y$ the local
time $\ell_y(x)$ is stochastically dominated by a geometric random
variable of parameter $(2y(R+1))^{-1}$. Therefore, it remains to deal
with the last term in (\ref{ay}).  But by part (b) of Lemma~\ref{ret1}
and Lemma~\ref{bias}, we have that $E_{Q_y^\omega} ( \ell_y(z)) \leq
e^{-\delta \kappa y^{1/4}/ 2} 2( R+1)y$ for each $\omega \in
A_{\delta,y}^c$ and each $z,\ -Ry\le z \leq -y^{1/4}$.
\end{proof}
  
Finally, we shall deal with the last term in (\ref{tf}).  
\begin{lemma} \label{bd2}
  $\displaystyle\lim_{y\to\infty}\frac{1}{y}\,E_{Q_y} \Big( \sum_ {-y^{1/4}
      \leq z < X_y} \ell_y(z)\Big)=0$.
\end{lemma}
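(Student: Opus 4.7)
The plan is to split the sum at $z=0$ and reduce both pieces to a uniform bound on the quenched Green function of the killed walk. Let $G$ be the good environmental event on which every window of length $L:=C\log y$ contained in $[-y^{1/4},y]$ contains a reasonable site and $V(x)\le K$ throughout this range. Combining (\ref{expbd2}) for positive windows, Lemma \ref{lemA} for negative windows, and a truncation argument for the upper bound on $V$, a union bound gives $Q_y(G^c)\le y^{-a}$ for any prescribed $a$ if $C$ is chosen large enough.

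The key step is the bound $G^\omega(z,z):=E^{\check{P}^{\omega,z}}(\ell_\infty(z))\le C_1\log y$ uniformly in $z\in[-y^{1/4},y]$ on $G$. For reasonable $z$ this is immediate since $\check{P}^{\omega,z}(\tau_z^{(2)}<\infty)\le e^{-V(z)}\le e^{-\kappa}$. For non-reasonable $z$, the walker's first excursion from $z$ reaches the nearest reasonable site $w$ (at distance $\le L$) before returning to $z$ with probability $\ge c/L$ (simple-random-walk hitting estimate, with survival losses on the way controlled by $V\le K$), and then dies at $w$ with probability $\ge 1-e^{-\kappa}$. Hence $\check{P}^{\omega,z}(\tau_z^{(2)}=\infty)\ge c'/L$, giving $G^\omega(z,z)\le L/c'$.

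From the strong Markov property at $\tau_z$,
\[
E_{Q_y^\omega}(\ell_y(z))\;=\;\frac{G^\omega(0,z;\tau_y)\,Z^{\omega,z}_y}{Z^{\omega,0}_y}\;\le\;G^\omega(z,z)
\]
(using $\check{P}^{\omega,0}(\tau_z<\tau_y)\,Z^{\omega,z}_y\le Z^{\omega,0}_y$ by Markov at $\tau_z$). So on $G$, $E_{Q_y^\omega}(\ell_y(z))\le C_1\log y$ uniformly. The first piece is then bounded by $\sum_{-y^{1/4}\le z<0}C_1\log y = O(y^{1/4}\log y) = o(y)$. For the second piece, the geometric-tail bound $E_{Q_y^\omega}(\ell_y(z)^2)\le 2(E_{Q_y^\omega}\ell_y(z))^2=O((\log y)^2)$, combined with $Q_y(z<X_y)\le Ce^{-cz}$ from Lemma \ref{conX} and Cauchy--Schwarz, gives
\[
\sum_{z\ge 0}E_{Q_y}[\ell_y(z)\I_{z<X_y}]\;\le\;\sum_{z\ge 0}O(\log y)\cdot e^{-cz/2}\;=\;O(\log y).
\]

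The $G^c$-contribution is handled by $Q_y(G^c)\le y^{-a}$ together with a polynomial moment bound on $\tau_y$ under $Q_y$, derivable from the renewal decomposition of Section \ref{three}. The principal obstacle is the uniform bound $G^\omega(z,z)=O(\log y)$ for non-reasonable $z$ in the presence of arbitrary potential values at sites between $z$ and the nearest reasonable site; the truncation $V\le K$ in the definition of $G$ is critical here to keep the walker's survival probability en route from decaying faster than polynomially in $y$. The argument is self-contained, avoiding any circular appeal to the ballisticity being established.
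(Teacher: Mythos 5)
Your proposal contains a genuine gap, and it stems from the two central quantitative claims underlying the Green-function strategy.

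First, the ``good event'' $G$ is required to include the condition $V(x)\le K$ for \emph{every} $x\in[-y^{1/4},y]$. This is not a typical event under $Q_y$. The paper only assumes $V\in[0,\infty]$ with $\IP(V(0)\in[\kappa,K])>0$; nothing prevents $\IP(V(0)>K)>0$, and a site with $V(x)\in(K,\infty)$ only costs the walk a bounded multiplicative penalty, so $Q_y$ does not suppress such sites. The quantity $Q_y(\exists\,x\in(0,y):V(x)>K)$ in fact tends to $1$ as $y\to\infty$ whenever $\IP(V>K)>0$; Theorem~\ref{mess} and Lemma~\ref{expbd} control long stretches without reasonable sites, but say nothing about large values of $V$ at isolated sites. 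So $Q_y(G^c)\le y^{-a}$ cannot be obtained from a union bound as you describe.

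Second, even if one could work on such an event, the claimed uniform bound $G^\omega(z,z)\le C_1\log y$ is incorrect. The gambler's-ruin estimate that the walk from $z\pm1$ reaches a site $w$ at distance $L$ before returning to $z$ with probability $\gtrsim 1/L$ is a statement about the \emph{unkilled} random walk, and those successful excursions typically take $\asymp L^2$ steps. Incorporating the survival weight $e^{-\sum V(S_n)}$ with $V\le K$ turns this into a Laplace transform of the exit time, which decays \emph{exponentially} in $L$, i.e.\ $\check{P}^{\omega,z}(\tau_z^{(2)}=\infty)\gtrsim e^{-c(K)L}$ and not $\gtrsim 1/L$. With $L=C\log y$ this gives $G^\omega(z,z)\lesssim e^{c(K)L}=y^{c(K)C}$, a polynomial bound, not $O(\log y)$. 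Since $C$ must in addition be taken large to get the union bound on $G^c$, the exponent $c(K)C$ need not be small, and the first sum $\sum_{-y^{1/4}\le z<0}G^\omega(z,z)$ is then not $o(y)$. The Cauchy--Schwarz step for the $z\ge 0$ piece inherits the same defect.

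For comparison, the paper's proof of Lemma~\ref{bd2} avoids any per-site Green-function estimate altogether. It bounds the tail of the total quantity $T=\sum_{-y^{1/4}\le z<X_y}\ell_y(z)$ directly: for $x\ge y^{3/4}$, the event $\{T\ge x\}$ forces either $X_y>x^{1/3}$ (exponentially unlikely by Lemma~\ref{conX}, via Lemma~\ref{geomb} and (\ref{expbd2})), or the walk to spend at least $x$ steps in $[-y^{1/4},x^{1/3})$ before hitting $x^{1/3}$. The latter is controlled by comparing, via Lemma~\ref{bias}, with the unconditioned simple random walk, for which the time spent in a strip of width $O(x^{1/3})$ before exiting has a tail of order $e^{-cx^{1/3}}$. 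Summing the resulting tail bound gives $E_{Q_y}(T)=o(y)$ without any control on individual local times and without any assumption on the upper range of $V$.
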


\begin{proof}
  Set $T=\sum_ {-y^{1/4} \leq z < X_y} \ell_y(z)$.  Then for each
  $x \geq y^{3/4}$,
  \begin{equation*}
    Q_y(T \geq x)\leq Q_y\Big( \sum_{n=0}^{\tau_{x^{1/3}}} \I_{\{S_n
      \geq -y^{1/4}\}} \geq x\Big)+ Q_y(X_y>x^{1/3}).
  \end{equation*}
  The first probability is dominated by the corresponding probability
  for an unconditioned simple random walk by Lemma~\ref{bias} and so
  is bounded by $Ce^{-c x^{1/3}}$ for suitable nontrivial $c$ and $C$, while
  by (\ref{expbd2}) and Lemma~\ref{geomb}, the second term is
  similarly bounded.
\end{proof}
Lemmas~\ref{bd1} and \ref{bd2} imply Lemma~\ref{taux}, and we are done.
\end{proof} 

\appendix
\section{}

\begin{proof}[Proof of Lemma~\ref{conw}]
  We shall give a proof of the first statement. The second one is even
  simpler. Recall that $E^0$ denotes the expectation with respect to
  the simple symmetric random walk measure $P^0$. Since
  $P^0(\tau_r<\infty)=1$, we shall drop $\I_{\{\tau_r<\infty\}}$ when
  appropriate. For each $C>0$
  \begin{align*}
    E_{\Qb_{0,r}}(\tau_r)&=E_{\Qb_{0,r}}(\tau_r;\tau_r\le
    Cr^3)+E_{\Qb_{0,r}}(\tau_r;\tau_r> Cr^3)\\&\le Cr^3+
    \frac{E^0\left(\tau_r\I_{\{\tau_r>Cr^3,\tau_r<\tau_0^{(2)}\}}\right)} {\IE
      E^0\left(e^{-\sum_{n=0}^{\tau_r-1}V(S_n,\omega)}\I_{\{\tau_0^{(2)}>\tau_r\}}
        \prod_{x=1}^{r-1}\I_{\{\ell^r(x)\ge
          2\}}\right)}\\&\le Cr^3+\frac{E^0(\tau_r
        \I_{\{\tau_r>Cr^3,\tau_r<\tau_{-r}\}})}
      {2^{-2r-1}e^{-\Lambda(2)r}e^{-\Lambda(3)+\Lambda(2)}}.
  \end{align*}
  In the transition from the second to the third line, the lower bound
  on the denominator was obtained by choosing a particular path, which
  visits every $x$, $0<x<r$, exactly twice before hitting $r$ except in the case where $r$ is even when $r-2$ is visited three times. 
The numerator in the third line is equal to
\begin{multline*}
  E^0\left[(\tau_r\wedge \tau_{-r}) \I_{\{\tau_r>Cr^3,\tau_r<\tau_{-r}\}}\right]
  \le \\\left(E^0[(\tau_r\wedge
    \tau_{-r})^2]P^0(\tau_r\wedge \tau_{-r}
    >Cr^3)\right)^{1/2}\le C_0r^2e^{-C'r}.
\end{multline*}
In the last line we used two basic facts about a simple symmetric
random walk: (a) $E^0[(\tau_r\wedge \tau_{-r})^2]\le C_0r^4$ and (b)
the probability that the exit time from the strip $(-r,r)$ exceeds
$Cr^3$ is bounded by $e^{-2C' r}$, where $C'\to\infty$ as $C\to\infty$
(this follows from the invariance principle and a compactness
argument). Choosing large enough $C$ we can ensure that $C'>2\log
2+\Lambda(2)$.
\end{proof}
\begin{proof}[Proof of Lemma~\ref{ind}]
  We shall prove (\ref{ind1}). The proof of (\ref{ind2}) is the same.
  Let $\cal{E}=\cal{E}_{x-}\cap \{V(x)\in[\kappa,K]\}\cap
  \cal{E}_{x+}$, then
\[Q_{0,y}(\cal{E})=\frac{\IE E^0\left(e^{-\sum_{n=0}^{\tau_y-1}
      V(S_n)}\I_{\{\tau_0^{(2)}>\tau_y\}}\I_{\cal{E}}\right)}{\IE
  E^0\left(e^{-\sum_{n=0}^{\tau_y-1}
      V(S_n)}\I_{\{\tau_0^{(2)}>\tau_y\} }\right) } =:\frac{I}{II}.\] We
start with the numerator. Decomposing the path space according to the
number of visits to $x$ and applying the strong Markov property we get
\begin{align*}
  I&=\sum_{m=1}^\infty \IE E^0\left(e^{-\sum_{n=0}^{\tau_y-1}
      V(S_n)}\I_{\{\tau_0^{(2)}>\tau_y\}}\I_{\{\ell_y(x)=m\}}\I_{\cal{E}}\right)\\
  &= \sum_{m=1}^\infty \IE E^0\left(e^{-\sum_{n=0}^{\tau_x^{(m)}-1}
      V(S_n)}\I_{\{\tau_0^{(2)}>\tau_x^{(m)}\}}E^x\left(e^{-\sum_{n=0}^{\tau_y-1}
        V(S_n)}\I_{\{\tau_x^{(2)}>\tau_y\}}
    \right)\I_{\cal{E}}\right)\\ &\le \sum_{m=1}^\infty
  e^{-(m-1)\kappa}\IE E^0\left(e^{-\sum_{n=0}^{\tau_x-1}
      V(S_n)}\I_{\{\tau_0^{(2)}>\tau_x\}}E^x\left(e^{-\sum_{n=0}^{\tau_y-1}
        V(S_n)}\I_{\{\tau_x^{(2)}>\tau_y\}}
    \right)\I_{\cal{E}}\right)\\ &= \frac {1}{1-e^{-\kappa}}\,\IE
  E^0\left(e^{-\sum_{n=0}^{\tau_x-1}
      V(S_n)}\I_{\{\tau_0^{(2)}>\tau_x\}}\I_{\cal{E}_{x-}}\right) \IE
  E^x\left(e^{-\sum_{n=0}^{\tau_y-1}
      V(S_n)}\I_{\{\tau_x^{(2)}>\tau_y\}}\I_{\cal{E}_{x+}} \right)
\end{align*}
We also have that
\begin{multline*}
  II\ge \IE
  E^0\left(e^{-\sum_{n=0}^{\tau_x-1}V(S_n)-\sum_{n=\tau_x}^{\tau_y-1}V(S_n)}
    \I_{\{\tau_0^{(2)}>\tau_x\}}\I_{\{\tau_x^{(2)}>\tau_y\}}\right)\\
  = \IE E^0\left(e^{-\sum_{n=0}^{\tau_x-1}V(S_n)}
    \I_{\{\tau_0^{(2)}>\tau_x\}}\right)\IE
  E^x\left(e^{-\sum_{n=0}^{\tau_y-1}V(S_n)}\I_{\{\tau_x^{(2)}>\tau_y\}}\right).
\end{multline*}
Taking the ratio of the above two estimates completes the proof.
\end{proof}
\begin{proof}
  [Proof of Lemma~\ref{Zzz}] 
  Consider a terminating renewal sequence $(L_i)_{i\ge 1}$ defined as
  follows: \[L_0=1,\quad L_{i+1}=1+\max_{\tau_{L_i}\le
    n\le\tau_{L_i}^{(2)}}Y_n,\ i\ge 0.\] The kernel of this sequence,
  $q(r)$, has a non-trivial mass at infinity, $q(\infty)=2p-1$, which
  is equal to the probability of a random walk to never return to its
  current location. Moreover, it is elementary to compute that
  $q(r)<e^{-c_0 r}$, $r\in\IN$, for some positive $c_0$. Let
  $N=\min\{i\ge 0:\ L_{i+1}=\infty\}$. Then $N$ has a geometric
  distribution with parameter $2p-1$ and for $m>1$
  \begin{multline*}
    P(B(0,m))=\sum_{j=1}^\infty
  P(L_j\ge m\,|\,N=j)P(N=j)\\\le e^{-\epsilon (m-1)}\sum_{j=1}^\infty
  E(e^{\epsilon (L_j-L_0)}|N=j)P(N=j)\le e^{-\epsilon (m-1)}\sum_{j=1}^\infty
  (K(\epsilon))^j P(N=j),
  \end{multline*}
  where $K(\epsilon):=E(e^{\epsilon (L_1-L_0)}|L_1<\infty)\to 1$ as
    $\epsilon\to 0$. This implies the statement of the lemma.
\end{proof}  
\begin{lemma}
  \label{ret}
  Let $z\in\IZ$ and $\omega\in \{V(z,\omega)\ge
  \kappa\}\cap\{Z^\omega_y>0\}$. Then \[Q^\omega_y(\ell_y(z)>1)\le
  e^{-\kappa}.\]
\end{lemma}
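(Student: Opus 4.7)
The plan is to apply the strong Markov property of the killed random walk twice and then use the fact that any return to $z$ requires surviving the killing test at $z$.

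I would begin by observing that $\{\ell_y(z)>1\}\subset\{\tau_z<\tau_y\}$, so nothing is lost by intersecting with this event. Combining (\ref{mark3}) with the strong Markov property of the killed walk at the stopping time $\tau_z$, the continuation after $\tau_z$, conditioned on $\tau_z<\tau_y<\tau_\dagger$, has the distribution of the killed walk started at $z$ and conditioned on $\tau_y<\tau_\dagger$. Writing $\tau_z^{(2)}$ for the first return time to $z$ under the shifted walk, this reduces the task to bounding
\[
Q^{\omega}_y(\ell_y(z)>1)\le Q^{\omega,z}_y(\tau_z^{(2)}<\tau_y).
\]

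To control the right-hand side, I would apply the strong Markov property a second time, now at $\tau_z^{(2)}$. Unfolding the conditional probability as $\check{P}^{\omega,z}(\tau_z^{(2)}<\tau_y<\tau_\dagger)/Z^{\omega,z}_y$ and factoring the numerator as $\check{P}^{\omega,z}(\tau_z^{(2)}<\tau_y\wedge\tau_\dagger)\cdot Z^{\omega,z}_y$, the common factor $Z^{\omega,z}_y$ (positive by hypothesis) cancels and leaves
\[
Q^{\omega,z}_y(\tau_z^{(2)}<\tau_y)=\check{P}^{\omega,z}(\tau_z^{(2)}<\tau_y\wedge\tau_\dagger).
\]

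Finally, the walker starting at $z$ cannot return to $z$ unless it first survives the killing test at $z$ performed at time $0$, which succeeds with probability exactly $e^{-V(z)}\le e^{-\kappa}$. Thus $\check{P}^{\omega,z}(\tau_z^{(2)}<\tau_y\wedge\tau_\dagger)\le \check{P}^{\omega,z}(\tau_z^{(2)}<\tau_\dagger)\le e^{-\kappa}$, which is the desired bound. The only mild subtlety is ensuring that the killing test at $z$ is charged just once; it is, because in the double-Markov decomposition the ``continue to $y$'' piece after $\tau_z^{(2)}$ (which would otherwise contribute a second factor of $e^{-V(z)}$ from the killing test at the return time) is precisely the $Z^{\omega,z}_y$ that cancels with the denominator.
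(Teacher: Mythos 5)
Your argument is correct and follows essentially the same route as the paper: strong Markov at $\tau_z$ to reduce to the walk started at $z$, strong Markov at $\tau_z^{(2)}$ to cancel the conditioning on $\{\tau_y<\infty\}$ (the paper does this via a Bayes-type rewriting rather than your explicit factorization, but the algebra is identical), and finally the observation that returning to $z$ costs at least one survival of the killing test there, giving the factor $e^{-V(z)}\le e^{-\kappa}$. Your closing remark about the second killing test being absorbed into the cancelling $Z^{\omega,z}_y$ correctly identifies the same point the paper makes implicitly.
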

\begin{proof}
  This is an obvious statement, which says that to return to $z$ it is
  necessary not to get absorbed when leaving $z$ after the first
  visit. Formally, using the strong Markov property of the killed
  random walk we get
  \begin{align*}
    &Q^\omega_y(\ell_y(z)>1)=
    \check{P}^\omega_0(\ell_y(z)>1\,|\,\tau_y<\infty) \le\\ &
    \check{P}^\omega_0(\tau_z^{(2)}<\tau_y\,|\,\tau_y<\infty,\tau_z<\infty)
    =\check{P}^\omega_z(\tau_z^{(2)}<\tau_y\,|\,\tau_y<\infty)= \\&
    \frac{\check{P}^\omega_z(\tau_y<\infty\,|\,\tau^{(2)}_z<\tau_y)}
    {\check{P}^\omega_z(\tau_y<\infty)}\,
    \check{P}^\omega_z(\tau^{(2)}_z<\tau_y)
     = \check{P}^\omega_z(\tau_z^{(2)}<\tau_y)\le
    e^{-\kappa}.\qedhere
  \end{align*}
\end{proof}
\begin{proof}[Proof of Lemma~\ref{ret1}]
  Part (a) follows from Lemma~\ref{ret}. For (b), by the Markov
  property we have
  \begin{align*}
    Q^{\omega,x_{i+r}}_y(\tau_{x_i}<\tau_y)&=\check{P}^{\omega,x_{i+r}}
    (\tau_{x_i}<\tau_y)\, \frac{\check{P}^{\omega,x_{i+r}}
      (\tau_y<\infty\,|\,\tau_{x_i}<\tau_y)
    }{\check{P}^{\omega,x_{i+r}} (\tau_y<\infty)}\\&\le e^{-\kappa r}
    \frac{\check{P}^{\omega,x_i}
      (\tau_y<\infty)}{\check{P}^{\omega,x_{i+r}} (\tau_y<\infty)}\le
    e^{-\kappa r}.\qedhere
  \end{align*}
\end{proof}

\begin{proof}[Proof of Lemma~\ref{0}]
      Since $Z_{0,y}\le Z_y$, we only need to show that 
      \[\liminf_{y\to\infty}\dfrac{\log Z_{0,y}}{y} \ge
      -\beta.\] By comparison with a simple symmetric random walk (see
      Lemma~\ref{bias}), we have that for $\IP$-a.e.\
      $\omega\in\{Z^\omega_y>0\}$ \[Q_y^\omega(\tau_y<\tau_0^{(2)})\ge
      P^0(\tau_y<\tau_0^{(2)})=\frac{1}{2y}.\]
      Therefore,
      \begin{equation*}
        Z_{0,y}=Q_y(\tau_y<\tau_0^{(2)})Z_y= 
        \IE(Q_y^\omega(\tau_y<\tau_0^{(2)})Z_y^\omega)\ge
        \frac{Z_y}{2y}.
      \end{equation*}
This finishes the proof.
    \end{proof}
      \begin{proof}[Proof of Lemma~\ref{bias}]
        This is a consequence of the strong Markov property of the
        killed random walk.
        \begin{multline*}
          Q_y^\omega(\ell_x(z)>m)=\\(Z^\omega_y)^{-1}
            E^0\left(e^{-\sum_{n=0}^{\tau_y-1}
                V(S_n,\omega)}\,\big|\,\tau_z ^{(m+1)}<\tau_x\right)
            P^0(\tau_z^{(m+1)}<\tau_x)\le \\(Z^\omega_y)^{-1}E^0\Big(
            e^{-\sum_{n=0}^{\tau_z-1} V(S_n,\omega)}
            E^0\Big(e^{-\sum_{n=\tau_z^{(m+1)}}^{\tau_y-1}
              V(S_n,\omega)}\,\big|\,\tau_z
            ^{(m+1)}<\tau_x\Big)\Big)\\\times P^0(\ell_x(z)>m)=P^0(\ell_x(z)>m).
        \qedhere
\end{multline*}
\end{proof}
       
The following follows from basic properties of simple random walks.
      \begin{lemma} \label{a2}
        Let $0\le x_1<x_2\le y$ and $\zeta=\max\{n:\,0\le n\le
        \tau_y,\,S_n=x_1\}$. Then
        \begin{equation}
          \label{u}
          E^0\left(e^{-\sum_{n=\zeta}^{\tau_{x_2}-1}V(S_n,\omega)}\right)= 
          E^{x_1}\left(e^{-\sum_{n=0}^{\tau_{x_2}-1}V(S_n,\omega)}\,\Big|\, 
            \tau_{x_1}^{(2)}>\tau_{x_2}\right).
        \end{equation}
      \end{lemma}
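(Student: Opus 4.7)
The plan is to decompose the trajectory at the last visit to $x_1$ by partitioning the path space according to $M := \ell_y(x_1)$, the total number of visits to $x_1$ before $\tau_y$. Since $d=1$ makes SSRW recurrent, $\tau_y < \infty$ a.s.\ under $P^0$, so $M$ is a.s.\ a positive integer and $\zeta = \tau_{x_1}^{(M)}$. The event $\{M=m\}$ factorizes as $\{\tau_{x_1}^{(m)} < \tau_y\}$ intersected with the post-$\tau_{x_1}^{(m)}$ event that the walk reaches $y$ without ever revisiting $x_1$. Since $x_1 < x_2 \le y$ and the walk is nearest-neighbor, that post-$\zeta$ excursion is forced to hit $x_2$ before $y$, so $\tau_{x_2}$ on the LHS is understood as the first visit to $x_2$ strictly after $\zeta$.

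Applying the strong Markov property at the stopping time $\tau_{x_1}^{(m)}$ and summing over $m$, I obtain
\[
E^0\!\left(e^{-\sum_{n=\zeta}^{\tau_{x_2}-1} V(S_n)}\right) = \left(\sum_{m=1}^\infty P^0(\tau_{x_1}^{(m)} < \tau_y)\right) E^{x_1}\!\left(e^{-\sum_{n=0}^{\tau_{x_2}-1} V(S_n)} \I_{\{\tau_y < \tau_{x_1}^{(2)}\}}\right).
\]
Repeated strong Markov applications give $P^0(\tau_{x_1}^{(m)} < \tau_y) = P^0(\tau_{x_1} < \tau_y)\bigl[1 - P^{x_1}(\tau_y < \tau_{x_1}^{(2)})\bigr]^{m-1}$, so the geometric series sums to $P^0(\tau_{x_1}<\tau_y)/P^{x_1}(\tau_y < \tau_{x_1}^{(2)})$. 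Moreover $P^0(\tau_{x_1} < \tau_y) = 1$, since any nearest-neighbor path from $0$ to $y$ must pass through $x_1$ when $0 \le x_1 < y$.

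The final step is a strong Markov application at $\tau_{x_2}$. Because $x_1 < x_2 \le y$, any path from $x_1$ to $y$ avoiding $x_1$ must cross $x_2$, hence
\[
E^{x_1}\!\left(e^{-\sum_{n=0}^{\tau_{x_2}-1} V(S_n)} \I_{\{\tau_y < \tau_{x_1}^{(2)}\}}\right) = E^{x_1}\!\left(e^{-\sum_{n=0}^{\tau_{x_2}-1} V(S_n)} \I_{\{\tau_{x_2} < \tau_{x_1}^{(2)}\}}\right) P^{x_2}(\tau_y < \tau_{x_1}),
\]
and analogously $P^{x_1}(\tau_y < \tau_{x_1}^{(2)}) = P^{x_1}(\tau_{x_2} < \tau_{x_1}^{(2)})\,P^{x_2}(\tau_y < \tau_{x_1})$. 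The gambler's-ruin factors $P^{x_2}(\tau_y < \tau_{x_1})$ cancel, leaving the ratio $E^{x_1}(e^{-\sum_{n=0}^{\tau_{x_2}-1} V(S_n)} \I_{\{\tau_{x_2} < \tau_{x_1}^{(2)}\}})/P^{x_1}(\tau_{x_2} < \tau_{x_1}^{(2)})$, which is precisely the conditional expectation appearing in (\ref{u}). There is no substantive obstacle here since everything reduces to properties of SSRW; the only point requiring care is the bookkeeping of the two conditioning events $\{\tau_y < \tau_{x_1}^{(2)}\}$ and $\{\tau_{x_2} < \tau_{x_1}^{(2)}\}$, whose equivalence for the pre-$\tau_{x_2}$ portion of the path is obtained by the second strong Markov application.
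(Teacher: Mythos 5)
The paper states this lemma without proof, remarking only that it ``follows from basic properties of simple random walks,'' so there is no official argument to compare against. Your proof is correct and is essentially the argument the authors must have had in mind. The two key moves are both clean applications of the strong Markov property for the simple symmetric walk: first, decomposing over $M=\ell_y(x_1)$ and applying strong Markov at each $\tau_{x_1}^{(m)}$ to factor out the last-exit excursion (here the crucial point, which you handle correctly, is that the last-exit decomposition converts into a geometric sum of stopping-time probabilities, and $P^0(\tau_{x_1}<\tau_y)=1$ since $0\le x_1<y$); and second, applying strong Markov again at $\tau_{x_2}$ to replace the indicator $\I_{\{\tau_y<\tau_{x_1}^{(2)}\}}$ by $\I_{\{\tau_{x_2}<\tau_{x_1}^{(2)}\}}$ times the gambler's-ruin factor $P^{x_2}(\tau_y<\tau_{x_1})$, which then cancels from numerator and denominator. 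You also correctly flag that $\tau_{x_2}$ on the left side must be read as the first hit of $x_2$ after $\zeta$, which is indeed the intended reading (compare the definitions of $\sigma_i,\rho_i$ in the proof of Lemma~\ref{below1}, where the lemma is invoked). No gaps.
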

\begin{proof}[Proof of Lemma~\ref{lemA}]
  Let $\zeta=\max\{n:\,0\le n\le \tau_y,\ S_n=0\}$. Then
\begin{align*}
  Q_y(B)&=\frac{\IE\left(E
      (e^{-\sum_{n=0}^{\tau_y-1}V(S_n,\omega)})\I_B\right)}
  {\IE\left(E e^{-\sum_{n=0}^{\tau_y-1}V(S_n,\omega)}\right)}\le
  \frac{\IE\left(E
      (e^{-\sum_{n=\zeta}^{\tau_y-1}V(S_n,\omega)})\I_B\right)}
  {\IE\left(E
      (e^{-\sum_{n=0}^{\tau_y-1}V(S_n,\omega)}
 \I_{\{\tau_0^{(2)}>\tau_y\}})\right)}\\
  &=\frac{\IE\left(
      Ee^{-\sum_{n=\zeta}^{\tau_y-1}V(S_n,\omega)}\right)\IP(B)}
  {\IE\left(E
      (e^{-\sum_{n=0}^{\tau_y-1}V(S_n,\omega)}\,|\,\tau_0^{(2)}>\tau_y)\right)
    P^0(\tau_0^{(2)}>\tau_y)}\overset{(\ref{u})}{=}2y\IP(B).\qedhere
\end{align*}
\end{proof}
Finally, we turn to the expected (with respect to $Q_y$) total time
spent by the random walk below $-R$. Whenever the event $A$ depends
only on the random walk, we can rewrite $Q_y(A)$ as follows:
\begin{equation}
  \label{Q}
  Q_y(A)=\frac{E^0
    \left(\I_Ae^{-\sum_{x<y} \Lambda_V(\ell_y(x))}\right)}{
    E^0\left(e^{-\sum_{x<y} \Lambda_V(\ell_y(x))}\right)},\ 
  \text{where}\ \Lambda_V(t): 
  =-\log \mathbb{E}e^{-t V(0)}.
\end{equation}

\begin{lemma}\label{vn}
  For each $R>\beta_V(1)/\Lambda_V(1)-1$
  there is a $c>0$ such that for all sufficiently large $y$
  \begin{equation}
    \label{vnb}
    E_{Q_y} \Big( \sum_{z<-Ry} \ell_y(z) \Big) \le
  e^{-cy}.
  \end{equation}
\end{lemma}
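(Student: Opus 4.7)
The plan is to exploit (\ref{Q}), which rewrites $Q_y(A) = E^0(\I_A W)/Z_y$ where $W = \exp(-\sum_{x<y}\Lambda_V(\ell_y(x)))$ and $Z_y = E^0 W$. The hypothesis $R > \beta_V(1)/\Lambda_V(1) - 1$ is equivalent to $(R+1)\Lambda_V(1) > \beta_V(1)$, so I will have exponential room between a numerator decay rate of $(R+1)\Lambda_V(1)$ and the denominator decay rate $\beta_V(1)$ supplied by (\ref{be}).

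For the numerator I would exploit the nearest-neighbor structure of the walk in one dimension: if $\ell_y(z) \ge 1$ for some $z < 0$, then the walk must have visited every integer in $[z, y-1]$ at least once before $\tau_y$. Since $V \ge 0$ the function $\Lambda_V$ is non-decreasing with $\Lambda_V(1) > 0$ (by $\IP(V(0)=0)<1$), so on $\{\ell_y(z)\ge 1\}$,
\[
W \;\le\; \exp\!\Big(-\sum_{x=z}^{y-1}\Lambda_V(1)\Big) \;=\; e^{-(y-z)\Lambda_V(1)}.
\]
Combined with the elementary gambler's-ruin identity $E^0\ell_y(z) = P^0(\tau_z<\tau_y)\cdot E^z\ell_y(z) = \tfrac{y}{y-z}\cdot 2(y-z) = 2y$ for $z < 0$, this gives $E^0(\ell_y(z)\,W) \le 2y\,e^{-(y-z)\Lambda_V(1)}$.

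Summing over $z < -Ry$ produces a geometric series bounded by $Cy\,e^{-(R+1)y\Lambda_V(1)}$. On the other hand, by the definition of $\beta_V(1)$ in (\ref{be}), for every $\epsilon > 0$ and all sufficiently large $y$ we have $Z_y \ge e^{-(\beta_V(1)+\epsilon)y}$. Taking the ratio,
\[
E_{Q_y}\Big(\sum_{z<-Ry}\ell_y(z)\Big) \;\le\; C y\,\exp\!\big(-[(R+1)\Lambda_V(1) - \beta_V(1) - \epsilon]\,y\big),
\]
and choosing $\epsilon$ small enough that $(R+1)\Lambda_V(1) - \beta_V(1) - \epsilon > 0$ delivers the required bound $e^{-cy}$.

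I do not foresee a serious obstacle: the argument rests only on (i) the range lower bound that uses one-dimensional nearest-neighborness, (ii) monotonicity of $\Lambda_V$, and (iii) the identity $E^0\ell_y(z) = 2y$ for $z<0$. The one point requiring care is that the range bound really needs $z < 0$ (so that $0$ lies between $z$ and $y$ and the walk's path from $0$ must cross every integer in $[z,y-1]$); this is automatic since we sum only over $z < -Ry < 0$. The matching of numerator rate against $\beta_V(1)$ is exactly what the hypothesis on $R$ is designed to ensure.
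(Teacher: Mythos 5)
Your proof is correct and follows essentially the same path as the paper's: rewrite $Q_y$ via (\ref{Q}), use the one-dimensional range bound to pull $e^{-(y+|z|)\Lambda_V(1)}$ out of $W$ on $\{\ell_y(z)\ge 1\}$, apply the gambler's-ruin identity $E^0\ell_y(z)=2y$, and compare against $Z_y\ge e^{-(\beta_V(1)+\epsilon)y}$. The only cosmetic difference is that the paper runs a layer-cake decomposition $\ell_y(z)=\sum_m\I_{\{\ell_y(z)\ge m\}}$ and tracks the $\Lambda_V(m)$ cost at $z$ separately before recovering the same factor $2y$, whereas you factor the deterministic bound on $W$ out of the expectation directly, which is marginally cleaner.
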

\begin{proof}
  Let $R>\beta_V(1)/\Lambda_V(1)-1$. Choose $\epsilon>0$ so that
  $\Lambda_V(1)(R+1)>\beta_V(1)+\epsilon$. Since $Z_y\ge
  e^{-(\beta_V(1)+\epsilon/2)y}$ for all sufficiently large $y$, we
  get
\begin{align*}
  E_{Q_y} \Big(  &\sum_{z<-Ry} \ell_y(z) \Big)=\sum_{z<-Ry}\sum_{m=1}^\infty Q_y(\ell_y(z)\ge m)  \\
  &=\frac{1}{Z_y}\,\sum_{z<-Ry}\sum_{m=1}^\infty
  E^0\left(\I_{\{\ell_y(z)\ge m\}}
    e^{-\sum_{x<y}\Lambda_V(\ell_y(x))}\right)\\&\le
  e^{(\beta_V(1)+\epsilon/2)y} \sum_{z<-Ry}\sum_{m=1}^\infty
  E^0\left(\I_{\{\ell_y(z)\ge m\}}
    e^{-\sum_{x<y}\Lambda_V(\ell_y(x))}\right)\\&\le
  e^{(\beta_V(1)+\epsilon/2)y} \sum_{z<-Ry}\sum_{m=1}^\infty
  E^0\left(\I_{\{\ell_y(z)\ge m\}}
    e^{-\Lambda_V(m)-\sum_{\stackrel{x<y}{x\ne
          z}}\Lambda_V(\ell_y(x))}\right)\\&\le
  e^{(\beta_V(1)+\epsilon/2)y}
  \sum_{z<-Ry}e^{-\Lambda_V(1)(|z|+y-1)}\sum_{m=1}^\infty
  P^0(\ell_y(z)\ge m) e^{-\Lambda_V(m)}.
\end{align*}
Since for $z<0$  
\[P^0(\ell_y(z)\ge m)=
\left(1-\frac{1}{2(y+|z|)}\right)^{m-1}\frac{y}{y+|z|},\]
we obtain 
\begin{align*}
&\sum_{m=1}^\infty P^0(\ell_y(z)\ge m) e^{-\Lambda_V(m)}=
\frac{y}{y+|z|}\sum_{m=1}^\infty \mathbb{E}\,
e^{-mV}\left(1-\frac{1}{2(y+|z|)}\right)^{m-1}\\&=\frac{y}{y+|z|}\,
\mathbb{E}\left(\frac{e^{-V}}{1-e^{-V}\left(1-\frac{1}{2(y+|z|)}\right)}\right)=
\mathbb{E}\left(\frac{2y}{2(e^V-1)(y+|z|)+1}\right)\le 2y.
\end{align*}
Therefore,
\begin{align*}
 E_{Q_y} \Big( \sum_{z<-Ry} \ell_y(z) \Big) &\le 2ye^{(\beta_V(1)+\epsilon/2)y}
  \sum_{z<-Ry}e^{-\Lambda_V(1)(|z|+y-1)}  \\&=2ye^{(\beta_V(1)+\epsilon/2-
    \Lambda_V(1)(R+1))y} \sum_{x=0}^\infty
  e^{-\Lambda_V(1)x}\\&=2y\,e^{(\beta_V(1)+\epsilon/2-\Lambda_V(1)(R+1))y}\,\frac{1}
  {1-e^{-\Lambda_V(1)}}\\&\le
  e^{-(\Lambda_V(1)(R+1)-\beta_V(1)-\epsilon)y}
\end{align*}
for all sufficiently large $y$.
  \end{proof}

\subsection*{Acknowledgements}

This paper has benefited from a self-evidently close reading from the
referee and has been improved as a result. In particular, we thank him
or her for drawing our attention to the paper \cite{GdH92} and for
streamlining the proof of Lemma~\ref{probexp}.

E.\ Kosygina was partially supported by the PSC-CUNY award \#
63393-00-41.  T.\ Mountford was partially supported by the Swiss NSF
grant \# 200021-129555.

\bibliographystyle{amsalpha}

\begin{thebibliography}{99}
\bibitem[AZ96]{AZ96} {\sc S.\ Albeverio; X.\  Y.\ Zhou} (1996) Free energy
and some sample path properties of a random walk with random
potential.  \textit{J. Statist. Phys.} \textbf{83}, no.\ 3-4, 573--622.
\bibitem[DZ98]{DZ98}{\sc A.\ Dembo; O.\ Zeitouni} (1998) Large
  Deviations Techniques and Applications.  \textit{Springer,
    Berlin}, 396 pp.
\bibitem[Do01]{Do01}{\sc J.\ L.\ Doob} (2001) Classical Potential
  Theory and its Probabilistic Counterpart.  \textit{Springer,
    Berlin}, 846 pp.
\bibitem[Du05]{Du05}{\sc R.\ Durrett} (2005) Probability: Theory and Examples.
  \textit{Duxbury, Inc., Belmont}, 496 pp.
\bibitem[Fe68]{Fe68} {\sc W.\ Feller} (1968) An introduction to
  probability theory and its applications. Vol.\ I.  Third edition
  \textit{John Wiley \& Sons, Inc., New York-London-Sydney}, xviii+509 pp.
\bibitem[Fe71]{Fe71} {\sc W.\ Feller} (1971) An introduction to
  probability theory and its applications. Vol.\ II.  Second edition
  \textit{John Wiley \& Sons, Inc., New York-London-Sydney}, xxiv+669
  pp.
\bibitem[Fl07]{Fl07} {\sc M.\ Flury} (2007) Large deviations and phase
  transition for random walks in random nonnegative potentials.
  \textit{Stochastic Process. Appl.}  \textbf{117}, no.\ 5, 596--612.
\bibitem[Fl08]{Fl08} {\sc M.\ Flury} (2008) Coincidence of Lyapunov
  exponents for random walks in weak random potentials.  \textit{Ann. Probab.}
  \textbf{36}, no.\ 4, 1528--1583.
\bibitem[GdH92]{GdH92} {\sc A. Greven; F. den Hollander} Branching
  random walk in random environment: phase transitions for local and
  global rates.  \textit{Probab. Theory Related Fields} \textbf{91},
  no.\ 2, 195-249.
\bibitem[Gr99]{Gr99}{\sc G.\ Grimmett} (1999) Percolation.  Second
  edition.  \textit{Grundlehren der Mathematischen Wissenschaften},
  \textbf{321}, Springer-Verlag, Berlin, xiv+444 pp.
\bibitem[IV10a]{IV10a} {\sc D.\ Ioffe; Y.\ Velenik} (2010) Crossing random
  walks and stretched polymers at weak
  disorder. \textit{Ann. Probab.} (to appear).
\bibitem[IV10b]{IV10b} {\sc D.\ Ioffe; Y.\ Velenik} (2010) Stretched
  polymers in random environment. \textit{arXiv:1011.0266v1
    [math.PR]}, 28 pp.
\bibitem[IV11]{IV11} {\sc D.\ Ioffe; Y.\ Velenik} (2011) Self-attracting random walks: the case of cretical drifts. \textit{arXiv:1104.4615v2
    [math.PR]}, 24 pp.
\bibitem[Kh96]{Kh96} {\sc K.\ M.\ Khanin} (1996) Random walks in a random
potential: loop condensation effects.  \textit{Internat. J. Modern
  Phys. B} \textbf{10}, no.\ 18-19, 2393--2404. 
\bibitem[KMZ10]{KMZ10} {\sc E.\ Kosygina; T.\ Mountford;\ M.\
    Zerner} (2010) Lyapunov exponents of Green's functions for random
  potentials tending to zero. \textit{Probab. Theory Related Fields},
  Online First, 17 pp.
\bibitem[Po97]{Po97} {\sc T.\ Povel} (1997) Critical large deviations of
  one-dimensional annealed Brownian motion in a Poissonian potential.
  \textit{Ann. Probab.}, \textbf{25}, no.\ 4, 1735--1773.
\bibitem[Si95]{Si95} {\sc Y.\,G.\ Sinai} (1995) A remark concerning random
  walks with random potentials.  \textit{Fund. Math.}  \textbf{147},
  no.\ 2, 173--180.
\bibitem[Sz95]{Sz95}{\sc A.-S.\ Sznitman} (1995).  Crossing velocities and
  random lattice animals.  \textit{Ann. Probab.} \textbf{23}, no.\ 3,
  1006--1023.
\bibitem[Sz98]{Sz98} {\sc A.-S.\ Sznitman} (1998) Brownian motion,
  obstacles and random media.  \textit{Springer Monographs in
    Mathematics.}  Springer-Verlag, Berlin. xvi+353 pp.
\bibitem[Sz00]{Sz00} {\sc A.-S.\ Sznitman} (2000) Slowdown estimates and
  central limit theorem for random walks in random environment.
  \textit{J. Eur. Math. Soc. (JEMS)} \textbf{2}, no.\ 2, 93--143.
\bibitem[Wu98]{Wu98}{\sc M.\,V.\ W\"uthrich} (1998) Superdiffusive behavior
  of two-dimensional Brownian motion in a Poissonian potential.
  \textit{Ann. Probab.}  \textbf{26}, no.\ 3, 1000--1015.
\bibitem[Ze98]{Ze98}{\sc M.\,P.\,W.\ Zerner} (1998) Directional decay
  of the Green's function for a random nonnegative potential on ${\bf
    Z}^d$. \textit{Ann. Appl. Probab.}, \textbf{8}, no.\ 1, 246--280.
\bibitem[Ze00]{Ze00}{\sc M.\,P.\,W.\ Zerner} (2000) Velocity and
  Lyapounov exponents of some random walks in random environment.
  \textit{Ann. Inst. H. Poincar\'e Probab. Statist.}  \textbf{36},
  no.\ 6, 737--748.
\bibitem[Zy09]{Zy09}{\sc N.\ Zygouras} (2009) Lyapounov norms for
  random walks in low disorder and dimension greater than three.
  \textit{Probab. Theory Related Fields} \textbf{143}, no.\ 3-4,
  615--642.

\end{thebibliography}

{\sc \small
\begin{tabular}{ll}
Department of Mathematics& \hspace*{20mm}\'Ecole Polytechnique F\'ed\'erale\\
Baruch College, Box B6-230& \hspace*{20mm}de Lausanne\\
One Bernard Baruch Way&\hspace*{20mm}D\'epartement de math\'ematiques\\
New York, NY 10010, USA&\hspace*{20mm}1015 Lausanne, Switzerland\\
{\verb+elena.kosygina@baruch.cuny.edu+}& \hspace*{20mm}{\verb+thomas.mountford@epfl.ch+}
\end{tabular}\vspace*{2mm}
\end{document}